\documentclass{article}

\usepackage[
  paperheight=11in,
  paperwidth=8.5in,
  top=10mm, bottom=20mm, left=28mm, right=28mm
]{geometry}

\usepackage{amsmath,amsthm,amsfonts,amssymb,mathrsfs,dsfont, faktor}
\usepackage{mathtools} 
\usepackage{graphicx}
\usepackage{longtable}
\usepackage{float}
\usepackage{subcaption}
\usepackage{booktabs}
\usepackage{siunitx}  
\usepackage{csvsimple}  
\usepackage{epstopdf} 
\graphicspath{{graphics/}}

\usepackage{tikz}
\usetikzlibrary{arrows,trees,positioning}
\usepackage{quiver} 

\usepackage{enumerate}
\usepackage{comment}
\usepackage[colorinlistoftodos]{todonotes}

\usepackage[table, dvipsnames]{xcolor}
\usepackage{hyperref}
\hypersetup{
  colorlinks=true,
  citecolor=teal,
  linkcolor=MidnightBlue,
  urlcolor=black
}
\usepackage{cleveref} 

\usepackage[toc,page]{appendix}

\numberwithin{equation}{section}

\theoremstyle{plain}  
\newtheorem{theorem}{Theorem}[section]
\newtheorem*{theorem*}{Theorem}   
\newtheorem{corollary}{Corollary}[section]
\newtheorem{lemma}{Lemma}[section]
\newtheorem{proposition}{Proposition}[section]

\theoremstyle{definition}
\newtheorem{definition}{Definition}[section]
\newtheorem{remark}{Remark}[section]

\usepackage[
  backend=biber,
  style=alphabetic,
  doi=true,
  url=true,
  eprint=false,
  isbn=false
]{biblatex}

\renewbibmacro*{doi+eprint+url}{%
  \printfield{doi}%
  \newunit\newblock
  \iffieldundef{doi}{\printfield{url}}{}%
}

\newcommand{\N}{\mathbb{N}}

\newcommand{\Z}{\mathbb{Z}}

\newcommand{\Q}{\mathbb{Q}}
\newcommand{\F}{\mathbb{F}}

\newcommand{\ot}{\mathcal{O}}

\newcommand{\paren}[1]{\left( #1 \right)}
\newcommand{\kron}[2]{\left( \tfrac{#1}{#2} \right)}

\newcommand{\Redei}{R\'edei }
\newcommand{\e}{\epsilon}

\DeclareMathOperator{\Gal}{Gal}

\DeclareMathOperator{\rank}{rank}

\DeclareMathOperator{\Cl}{Cl} 

\DeclareMathOperator{\rk}{rk}
\DeclareMathOperator{\Am}{Am}
\newcommand{\Amst}{\Am_{st}}

\DeclareFontFamily{U}{wncy}{}
\DeclareFontShape{U}{wncy}{m}{n}{<->wncyr10}{}
\DeclareSymbolFont{mcy}{U}{wncy}{m}{n}
\DeclareMathSymbol{\Sh}{\mathord}{mcy}{"58}

\newcommand{\mfp}{\mathfrak p}

\newcommand{\mfc}{\mathfrak c}

\newcommand{\fk}{\mathfrak }

\title{On the Iwasawa $\lambda$-invariant of the cyclotomic $\Z_2$-extension of a family of real quadratic fields in which $2$ splits  }
\date{}
\author{Josu\'e \'Avila, Foivos Chnaras}

\begin{document}

\maketitle 

\begin{abstract}
We study Greenberg's conjecture for cyclotomic $\Z_2$-extensions of
real quadratic fields. Let $K=\mathbb Q(\sqrt{pq})$, where
$$
p\equiv 1 \mod 8,\qquad q\equiv 9 \mod {16},\qquad
\left(\frac{p}{q}\right)=-1.
$$
Under the additional assumptions
$$
\left(\frac{2}{p}\right)_4
\left(\frac{2}{q}\right)_4
\left(\frac{pq}{2}\right)_4=-1
$$
and
$$
\left(\frac{2}{p}\right)_4=-1
\quad\text{or}\quad
\left(\frac{2}{q}\right)_4=-1,
$$
we prove that $\lambda_2(K)=0$. The proof combines Greenberg's criterion
for the split prime case with a capitulation argument modeled on Kumakawa.
The main new input is a square-class computation of the Hasse unit index of
the biquadratic extension $K_2=\Q(\sqrt{pq}, \sqrt{2+\sqrt{2}})/\mathbf Q_1=\Q(\sqrt{2})$, showing that $q(K_2)\le 2$. 
\end{abstract}

\section{Introduction}
For a number field $K$ and a prime number $p$, we denote by $\mu_p(K), \lambda_p(K), \nu_p(K)$ the Iwasawa invariants of the cyclotomic $\Z_p$-extension $K_\infty$ of $K$. A standard result by Ferrero-Washington \cite{MR528968} states that $\mu_p(K)=0$ for any abelian number field $K$. Greenberg \cite{MR401702} conjectured that $\lambda_p(K)$ is also zero when $K$ is a totally real number field. Most of the progress on this conjecture has focused on the case of real quadratic number fields and specifically for $p=2$.

A classical theorem of Iwasawa states that if $K/\Q$ is a real quadratic extension and $p\nmid h_K$, then $\lambda_p(K)=0$. Whenever Iwasawa's result applies, we call this a "trivial case".

In this paper, we focus on the cyclotomic $\Z_2$-extension of real quadratic number fields. 
Genus theory and the theorem of \Redei- Reichardt imply that for $K=\Q(\sqrt{D})$, the following cases are "trivial" (in the sense that $2$ does not split in $K$ and the class number is odd): 

$$D=\begin{cases}
    2,\\ 
    p, \quad p \equiv 5 \mod 8, \\ 
    q, \quad q\equiv 3 \mod 4, \\ 
    pq, \quad p\equiv 3 \mod 8, \quad q\equiv 7 \mod 8
\end{cases}$$

A lot of research has been devoted to the study of the "non-trivial" cases, with various partial results. In this paper, we focus on the following case:
\begin{theorem} \label{th: main th}
Let $K=\mathbf{Q}(\sqrt{pq})$, where
$$
p\equiv 1 \mod{8},\qquad q\equiv 9 \mod{16},\qquad \left(\frac{p}{q}\right)=-1.
$$
Assume
$$
\left(\frac{2}{p}\right)_4\left(\frac{2}{q}\right)_4\left(\frac{pq}{2}\right)_4=-1
$$
and
$$
\left(\frac{2}{p}\right)_4=-1
\quad\text{or}\quad
\left(\frac{2}{q}\right)_4=-1.
$$
Then $\lambda_2(K)=0$.
\end{theorem}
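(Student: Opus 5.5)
The plan is to combine Greenberg's criterion for the split case with a capitulation argument in the first two layers, as indicated in the abstract. Since $p\equiv q\equiv 1 \bmod 8$, we have $pq\equiv 1\bmod 8$, so $2$ splits in $K$. Moreover, $\left(\frac{p}{q}\right)=-1$, so the only prime of $K_\infty$ over $p$ or $q$ that ramifies is the genus one. Genus theory and the R\'edei matrix (using $\left(\frac{p}{q}\right)=-1$) give $\rk_2 \Cl(K)=1$. I would first compute the $2$-part $A_0$ of $\Cl(K)$ precisely, aiming to show it is cyclic of order $2$ or $4$; the quartic symbol hypotheses should control this via R\'edei--Reichardt. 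I would also show that the primes $\mathfrak l,\mathfrak l'$ above $2$ are totally ramified in $K_\infty/K$, and track the classes of $\mathfrak l$ in $\Cl(K)$, namely whether $\mathfrak l^{h}$ has a principal generator.

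Next I apply Greenberg's criterion in the form: $\lambda_2(K)=\mu_2(K)=0$ if and only if, for some $n$, the image in $A_n$ of the subgroup $D_0$ generated by classes of primes above $2$ (or of $A_0$ itself when $D_0=A_0$) capitulates appropriately. The concrete version I use is the following: it suffices to find $n$ such that every class of $A_0$ becomes trivial in $A_n$, and $A_0$ is generated by primes above $2$. I would therefore aim to prove that $A_0=\langle[\mathfrak l]\rangle$, and that $[\mathfrak l]$ capitulates in $K_2=K(\sqrt{2+\sqrt2})$, following Kumakawa.

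The key step is a unit index computation. Since $\mathfrak l$ is totally ramified in $K_2/K$, we have $\mathfrak l\mathcal O_{K_2}=\mathfrak L^4$, and $\mathfrak L$ is stable under $\Gal(K_2/K)$. Capitulation of the ambiguous class then reduces, via the standard exact sequence relating $H^1(G,E_{K_2})$ to the capitulation kernel and the ambiguous ideals, to a count of $[E_{K_2}:N\ldots]$. This is where the Hasse unit index $q(K_2)=[E_{K_2}:W E_{K_2^+}E_{\mathbf Q_1\text{-part}}]$ of $K_2$ as a biquadratic extension of $\mathbf Q_1=\Q(\sqrt2)$ enters. Here $K_2$ contains the subfields $\Q(\sqrt{2+\sqrt2})$, $\Q(\sqrt{2pq})$ and $\Q(\sqrt{pq},\sqrt2)$ over $\mathbf Q_1$. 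Kuroda's class number formula gives $h(K_2)=q(K_2)h_1h_2h_3/(4h(\mathbf Q_1)^2)$ or similar, and the capitulation count needs $q(K_2)\le 2$.

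To prove $q(K_2)\le2$, I will do a square-class computation. I take fundamental units $\varepsilon_{pq},\varepsilon_{2pq}$ and the units of $\Q(\sqrt2)$ and $\Q(\sqrt{2+\sqrt2})$, and I determine which products become squares in $K_2$. The norms of $\varepsilon_{pq}$ and $\varepsilon_{2pq}$ are $+1$, because $p\equiv1\bmod 4$ and $\left(\frac pq\right)=-1$ together with genus considerations give this. So $\varepsilon=x^2 d$ with $d$ a divisor of the discriminant, and the hypotheses on $\left(\frac2p\right)_4,\left(\frac2q\right)_4,\left(\frac{pq}2\right)_4$ should pin down $d$ and rule out a square root of $\varepsilon_{pq}\varepsilon_{2pq}\cdot(\text{unit of }\mathbf Q_1)$ in $K_2$. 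The main obstacle is this step: computing the square classes of the units modulo $K_2^{\times2}$ and excluding each candidate square root using the quartic symbols. I expect a case split according to which of $\left(\frac2p\right)_4,\left(\frac2q\right)_4$ equals $-1$. Once $q(K_2)\le2$ is established, I will conclude that $[\mathfrak l]$ capitulates in $K_2$, and hence that $A_0$ capitulates, so $\lambda_2(K)=0$.
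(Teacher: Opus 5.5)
Your plan fails at the choice of the class to capitulate. You aim to prove $A_0=\langle[\mathfrak l]\rangle$ with $\mathfrak l\mid 2$, but this is false here. Since $\kron{p}{q}=-1$, Scholz's theorem gives $N(\epsilon_{pq})=-1$, so $\Cl(K)=\Cl^+(K)$. The R\'edei matrix gives $r_4=0$, so $A_0\simeq\Z/2\Z$. The genus character attached to $p$ takes the value $\kron{2}{p}=1$ on $\mathfrak l$, because $p\equiv 1 \bmod 8$. So $[\mathfrak l]$ lies in the principal genus, whose $2$-part is $A_0^2=1$. The prime above $2$ therefore has trivial image in $A_0$, i.e.\ $D_0=1$.

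The nontrivial class is that of the ramified prime $\mathfrak q$ above $q$, and the paper capitulates exactly this class in $K_2$. The choice is forced by the argument:
\begin{itemize}
\item $q\equiv 9 \bmod 16$ gives $\kron{2+\sqrt2}{\mathfrak q}=\kron{q}{2}_4=-1$, so the primes above $q$ are inert in $K_2/K_1$.
\item Hence they split completely in $L_1K_2/K_2$, where $L_1$ is the $2$-Hilbert class field of $K_1$.
\item Combined with $A_1\simeq(\Z/2\Z)^2$, $r_2(A_2)=2$, $|A_2|=4q(K_2)\le 8$ and $[\mathfrak q_i]^2=1$, a Kumakawa-type argument forces $[\mathfrak q_1\mathfrak q_2]=1$.
\end{itemize}
This is where $q(K_2)\le 2$ is actually used, not through an $H^1(G,E_{K_2})$ count.

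A second gap: because $2$ splits in $K$, capitulation of $A_0$ is not enough by itself. Greenberg's split-case criterion needs $B_n=D_n$, and capitulation only gives $B_n'=D_n$. The paper gets $B_n=B_n'$ from property $(P_n)$. That property is equivalent to $\epsilon_D\notin N_{K_1/K}(K_1^\times)$, which is exactly what the hypothesis $\kron{2}{p}_4\kron{2}{q}_4\kron{pq}{2}_4=-1$ gives, via Artin symbols in the cyclic quartic field $\Q(\sqrt{\epsilon_D\sqrt D})$. Your plan uses the quartic hypotheses only for the unit index, so this half of the criterion is missing.

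The unit-index step is also set up on the wrong fields. The quadratic subextensions of $K_2/\mathbf Q_1$ are $K_1$, $\mathbf Q_2$ and $F_1=\mathbf Q_1(\sqrt{(2+\sqrt2)pq})$. The field $\Q(\sqrt{2pq})$ is not one of them; it sits inside $K_1$. So $q(K_2)=[E(K_2):E(K_1)E(F_1)E(\mathbf Q_2)]$ involves the rank-$3$ unit group of $F_1$. A square-class computation with only $\epsilon_{pq}$, $\epsilon_{2pq}$ and the units of $\mathbf Q_1,\mathbf Q_2$ cannot determine it.

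Also, $N(\epsilon_{pq})=-1$, not $+1$. The decomposition $\epsilon=dx^2$ therefore applies only to $\epsilon_{2pq}$, whose norm is $+1$. That decomposition is what yields $\sqrt{\epsilon_{2pq}}\in K_1$ and $A_1\simeq(\Z/2\Z)^2$.

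The missing idea is how to control $E(F_1)$ without computing it:
\begin{itemize}
\item $E(F_1)\cap K_2^{\times2}=E(F_1)^2$ holds iff the dyadic prime of $F_1$ is non-principal.
\item Since $r_4(A^+(F_1))=0$ (from a generalized R\'edei matrix), this holds iff neither $\sqrt2$ nor $2+\sqrt2$ is a norm from $F_1$ to $\mathbf Q_1$.
\item By a Hilbert-symbol computation, this is exactly $\kron{2}{p}_4=-1$ or $\kron{2}{q}_4=-1$. This is where the second hypothesis enters.
\end{itemize}
With that in hand, a square-class analysis of $H=E(K_1)E(F_1)E(\mathbf Q_2)$, using that $\epsilon_D$ is not a norm from $K_2$ to $K_1$, gives $q(K_2)\le 2$.
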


The proof has two main parts. First, using a capitulation argument modeled on
Kumakawa \cite{MR4262274}, we show that it is enough to prove $q(K_2)<4$, where $q(K_2)$
is the Hasse unit index of the biquadratic extension $K_2/\mathbf Q_1$, with $K_2$ being the second layer of $K_{\infty}$ and $\mathbf{Q}_1=\Q (\sqrt{2})$. Second, let $F_1=\Q (\sqrt{(2+\sqrt{2})D}$ and $\mathbf{Q}_2=\Q(\sqrt{2+\sqrt{2}})$. We
compute the Hasse index by a square-class analysis of
$$
E(K_1)E(F_1)E(\mathbf{Q}_2)\subseteq E(K_2),$$ 

where $E(L)$ denotes the units of the ring of integers $\ot_L$, and prove that $q(K_2)\le 2$.

\medskip

\noindent\textbf{Organization of the paper.}
In Section~3, we recall known cases and place Theorem~1.1 among the
remaining cases for real quadratic fields. Section~4 reviews Greenberg's
criterion and the ambiguous class groups used later. Section~5 collects the
genus theory, R\'edei matrix, Hilbert symbol, and biquadratic class number
formulae needed in the proof. Section~6 treats an auxiliary case with
$(p,q)\equiv (5,1)\mod 8$ (See Proposition $\ref{th: case (5,1)})$. Finally, Section~7 proves Theorem~1.1: we first
analyze the fields $K_1$ and $F_1$, then use a Kumakawa-type
capitulation argument to reduce the proof to bounding the Hasse unit index
$q(K_2)$, and conclude by proving $q(K_2)\le 2$.

\section{Notation}

We establish the following notation that is used throughout this article:
\begin{enumerate}
    \item  $K=\Q (\sqrt{D})$, where the conditions on $D$ will be specified when needed. 
    \item $\epsilon_D$ is the fundamental unit of $K=\Q(\sqrt{D})$ and  $\epsilon_2=1+\sqrt{2}$ is the fundamental unit of $\Q(\sqrt{2})$.
    \item $K_n$ is the $n-\rm{th}$ layer of the cyclotomic $\Z_2$-extension of $K$ and $\mathbf{Q}_n$ is the $n-\rm{th}$ layer of the cyclotomic $\Z_2$-extension of $\Q$. In particular, $K_0 = K, K_1 = \Q(\sqrt{2}, \sqrt{D})$ and $K_{2}=\Q \left( \sqrt{2+\sqrt{2}},\sqrt{D}\right)$, while $\mathbf{Q}_{1} = \Q(\sqrt{2})$ and $\mathbf{Q}_2=\Q(\sqrt{2+\sqrt{2}})$. In general, $K_n=\Q(a_n, \sqrt{D})$ where $a_0=0$ and $a_n=\sqrt{2+a_{n-1}}$.
    \item $F_n$ is the subfield of $K_{n+1}$ containing $\mathbf{Q}_n$, different from $K_n$ and $\mathbf{Q}_{n+1}$. In particular, $F_0= \Q( \sqrt{2D})$ and $F_1=\Q(\sqrt{(2+\sqrt{2})D})$.
    \item $E(L)$ is the group of units of the ring of integers $\ot_L$ of a number field $L$.
    \item $A(L)$ is the $2$-primary part of the class group of $L$. We denote $A_n=A(K_n)$.
\end{enumerate}

Any other notation needed is specified in the respective section.

\section{Overview of the cases}

\subsection{Previous results}
In this section we record various past results. 
\\

As already mentioned in the introduction, when $K=\Q(\sqrt{D})$, the following are "trivial cases":  
$$D=\begin{cases}
    2,\\ 
    p, \quad p \equiv 5 \mod 8, \\ 
    q, \quad q\equiv 3 \mod 4, \\ 
    pq, \quad (p,q)\equiv (3,7) \mod 8
\end{cases}$$
We also remark that $K=\Q(\sqrt{D})$ and $F_0=\Q(\sqrt{2D})$ share the same $\Z_2$-extension and therefore $\lambda_2(K)=0 \iff \lambda_2(F_0)=0$.

\begin{theorem}
    In the following non-trivial cases, we have $\lambda_2(K)=0$ for $K=\Q(\sqrt{D})$:

    \begin{enumerate}[(1)]
        \item $D=p, \quad p \equiv 1\mod 8$ and $\paren{ \frac{2}{p}}_4 \paren{\frac{p}{2}}_4 =-1$. Then also $\nu_2(K)=0$.
        \item $D=pq, \quad (p,q)\equiv (3,3) \mod 8$. Then also $\nu_2(K)=0$.
        \item $D=pq, \quad (p,q)\equiv (3,5) \mod 8$. Then also $\nu_2(K)>0$.
        \item $D=pq, \quad (p,q)\equiv (5,7) \mod 8$. Then also $\nu_2(K)>0$.
        \item $D=pq, \quad (p,q)\equiv (5,5) \mod 8$. Then also $\nu_2(K)>0$.
        \item $D=pq, \quad (p,q)\equiv (3,1) \mod 8$ and $\paren{\frac{q}{p}}=-1$ and $\kron{2}{q}_4 =-1$
        \item $D=pq, \quad (p,q)\equiv (3,1) \mod 8$, $\paren{\frac{q}{p}}=-1$, $q\equiv 9 \mod 16$, $\paren{\frac{2}{q}}_4=1$ and $\rk_4(A_2)=1$. 
    \end{enumerate}
\end{theorem}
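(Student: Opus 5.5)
The statement collects results already available in the literature, so the plan is not to reprove each item from scratch. Instead I would check, case by case, that the hypotheses of a known criterion are met, and attribute each item to its source (Ozaki--Taya and Fukuda--Komatsu for the prime discriminant case, and Mouhib--Movahhedi, Kumakawa and related work for the composite cases). I will sort the items by the decomposition of $2$ in $K=\Q(\sqrt{D})$, since that decides which criterion applies. If $D\equiv 3 \mod 4$, then $2$ ramifies in $K$ and $K_\infty/K$ is totally ramified at the unique prime above $2$; this covers $(p,q)\equiv(3,5),(5,7) \mod 8$ and $(3,1) \mod 8$. If $D\equiv 1 \mod 8$, then $2$ splits; this covers $D=p\equiv 1 \mod 8$ and $(p,q)\equiv(3,3),(5,5) \mod 8$.

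\emph{Ramified cases.} I would use Fukuda's stabilization lemma: if $K_\infty/K$ is totally ramified at all primes above $2$ and $|A_1|=|A_0|$ (or merely $\rk_2 A_1=\rk_2 A_0$), then $|A_n|$ (respectively $\rk_2 A_n$) is constant for all $n$, which forces $\lambda_2(K)=0$. Genus theory gives $\rk_2 A_0$. For $A_1$, I would apply the Kuroda/Kubota class number formula to the biquadratic field $K_1=\Q(\sqrt2,\sqrt D)$. This expresses $h(K_1)$ through $h(\sqrt{2D})$ (via $F_0$), through $h(\sqrt D)$, and through the unit index $q(K_1)$. The index $q(K_1)$ is decided by whether $\sqrt{\epsilon_2\epsilon_D}$ or $\sqrt{\epsilon_{2D}}$ lies in $K_1$, which in turn depends on the norms of $\epsilon_D$ and $\epsilon_{2D}$ and on which of $\epsilon_D, 2\epsilon_D, p\epsilon_D$ is a square. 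When the unit index does not produce equality of orders, I would instead compare $\rk_4$ using R\'edei matrices; here the quartic symbol hypotheses $(2/q)_4=\pm1$ and $q\equiv 9 \mod 16$ enter. For item (7), the assumption $\rk_4(A_2)=1$ is used directly to stabilize the rank from level $2$ onward.

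\emph{Split cases.} For $D=p\equiv 1 \mod 8$ and $(p,q)\equiv(3,3),(5,5)$, I would apply Greenberg's criterion. Say $2=\mathfrak l\mathfrak l'$ with $\mathfrak l$ totally ramified in $K_\infty$ and $\mathfrak l^{h}=(\alpha)$. Then $\lambda=\mu=0$ as soon as the $2$-part of the class of $\mathfrak l$ is already accounted for, which is the Fukuda--Komatsu condition. This condition is checked through the $2$-adic valuation of $\log_2 \alpha$ versus $\log_2\epsilon_D$. In the case $D=p$ this reduces (Ozaki--Taya) to the quartic symbol condition $(2/p)_4(p/2)_4=-1$. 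The assertions about $\nu_2$ then come from the resulting stable value of $|A_n|$: either $A_n=0$ for all $n$, or $A_n\simeq \Z/2$ stably.

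I expect the main obstacle to be the unit-index computations, namely pinning down $q(K_1)$, and $q(K_2)$ for item (7), from square classes of the fundamental units. In the split cases, the matching obstacle is the $2$-adic logarithm comparison. For each of these I would first try to read off the norm of $\epsilon_D$ and the square class of $\epsilon_D$ from classical results on $x^2-Dy^2=\pm 1,\pm 2$, before any direct computation.
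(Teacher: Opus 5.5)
\textbf{Attributions.} The paper gives no argument for this theorem. Its proof is the attribution that follows it: items (1)--(5) are due to Ozaki--Taya \cite{MR1484637}, item (6) to Fukuda--Komatsu \cite{MR2149635}, and item (7) to Kumakawa \cite{MR4262274}. Proving it by citation is the right plan, but then the citations are the entire proof, and yours do not match. Ozaki--Taya also cover the composite cases (2)--(5), not just $D=p$. Item (6) is Fukuda--Komatsu rather than ``related work''. The paper does not draw on Mouhib--Movahhedi for any of the seven items.

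\textbf{Where your sketched mechanisms fail.}

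\emph{Rank equality does not give $\lambda_2=0$.} Fukuda's lemma in rank form only makes $\rk_2 A_n$ constant. That is compatible with $A_n\simeq \Z/2^{n+c}\Z$ for all large $n$, i.e.\ $\lambda_2=1$. So neither your fallback of comparing $4$-ranks via R\'edei matrices nor ``using $\rk_4(A_2)=1$ to stabilize the rank from level $2$'' says anything about $\lambda_2$. What is needed is stabilization of the orders $|A_n|$, or a capitulation statement. In (7) the hypothesis on $A_2$ serves the latter. It pins down $A_2$ well enough that a class-field-theoretic argument at level $2$ forces the generator of $A_0$ to capitulate in $K_2$, after which Greenberg's criterion applies. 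This is the argument the paper reproduces, following Kumakawa, in Lemma~\ref{lem:kumakawa-capitulation}.

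\emph{Your split-case tool does not cover (5).} Comparing $v_2(\log_2\alpha)$, where $\mathfrak l^{h}=(\alpha)$, with $v_2(\log_2\epsilon_D)$ is the Fukuda--Komatsu criterion for fields of odd class number. It fits (1) and (2), where $h_K$ is odd and $\nu_2=0$. It does not fit (5): for $p\equiv q\equiv 5\bmod 8$ the genus field $\Q(\sqrt p,\sqrt q)$ is unramified over $K$ at every place, so $A_0\neq 0$, as the conclusion $\nu_2(K)>0$ already signals. There you need an argument that tolerates $A_0\neq 0$. Two options are proving $|A_1|=|A_0|$ and applying Fukuda's lemma (both dyadic primes are totally ramified in $K_\infty/K$), or verifying $(P_n)$ and $H_0=A_0$ in Theorem~\ref{th: greenberg split Bn}.

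\emph{The stable group need not be $\Z/2\Z$.} For $D=5\cdot 29=145$, Scholz's theorem gives $A_0\simeq \Z/4\Z$, and the norm maps $A_n\to A_0$ are surjective. So your claim that either $A_n=0$ or $A_n\simeq\Z/2\Z$ stably is false.
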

Cases (1)-(5) are due to Ozaki-Taya \cite{MR1484637} and case (6) is due to Fukuda-Komatsu \cite{MR2149635}. Case (7) is due to \cite{MR4262274}.\\

Consider now $K=\Q(\sqrt{pq})$ with $p\equiv 5\mod 8$ and $q\equiv 1 \mod 8$. Then the following cases have been proven: \cite[Cor~ 3.3.3]{MR4879114}

\begin{corollary}
\begin{enumerate}
    \item Assume $\paren{\frac{p}{q}}=-1$. Then $\lambda_2(K)=0$ and $A_n=A_0\simeq \Z/2\Z$ for all $n\geq 0$.
    \item Assume $\paren{\frac{p}{q}}=1$ and $\paren{\frac{p}{q}}_4=\paren{\frac{q}{p}}_4=-1.$ Then $\lambda_2(K)=0$ and $A_n\simeq A_0 \simeq \Z / 4 \Z$ for all $n\geq 0$
    \item Assume $\paren{\frac{p}{q}}=1$ and $\paren{\frac{p}{q}}_4=\paren{\frac{q}{p}}_4=1$ and $N(\epsilon_{D})=-1$. Then $\lambda_2(K)=0$ and $A_n \simeq A_0$ for all $n\geq 0$.
    \item Assume $\paren{\frac{p}{q}}=1$ and $\paren{\frac{p}{q}}_4=-\paren{\frac{q}{p}}_4$ and $q\equiv 1 \mod 16$ and $N(\epsilon_{2D})=-1$. Then $\lambda_2(K)=0$ and $A_n\simeq A_1$ for all $n\geq 1$.
     
\end{enumerate}
\end{corollary}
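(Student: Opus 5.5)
The plan is to exploit the fact that $2$ is inert in $K$ and then apply Fukuda's stabilization theorem. Since $p\equiv 5\bmod 8$ and $q\equiv 1 \bmod 8$, we have $pq\equiv 5\bmod 8$, so $2$ is inert in $K=\Q(\sqrt{pq})$. Hence there is exactly one prime of $K$ above $2$, and it is totally ramified in $K_\infty/K$. Two consequences follow.
\begin{enumerate}[(i)]
\item The norm maps $A_{n}\to A_{m}$ for $n\ge m$ are surjective, so $|A_n|$ is nondecreasing in $n$.
\item By Fukuda's theorem, if $|A_{n+1}|=|A_n|$ (or $A_{n+1}\simeq A_n$) for some $n$, then $A_m\simeq A_n$ for all $m\ge n$, and in particular $\lambda_2(K)=0$.
\end{enumerate}
So in each of the four cases it suffices to compute $A_0$ and to show that the first relevant step does not grow: $|A_1|=|A_0|$ in cases 1--3, and $|A_2|=|A_1|$ in case 4.

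We compute $A_0$ by genus theory and the R\'edei matrix of Section~5. The discriminant $pq$ has two prime divisors, so the $2$-rank of $A_0$ is $1$.
\begin{itemize}
\item If $\kron{p}{q}=-1$, the R\'edei matrix has full rank, so the $4$-rank is $0$ and $A_0\simeq \Z/2\Z$.
\item If $\kron{p}{q}=1$, the $4$-rank is $1$, and the $8$-rank is governed by the quartic symbols $\kron{p}{q}_4$ and $\kron{q}{p}_4$ through R\'edei--Reichardt and Scholz-type criteria. This gives $A_0\simeq \Z/4\Z$ when both symbols equal $-1$ (case 2).
\item The sign of $N(\epsilon_D)$ enters case 3 through Scholz's reciprocity. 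In that case we only need $A_1\simeq A_0$, not the exact structure of $A_0$.
\end{itemize}

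To control $A_1$ we use the class number formula for the biquadratic field $K_1=\Q(\sqrt2,\sqrt{pq})$ (Kuroda's formula, recalled in Section~5):
$$h(K_1)=\tfrac14\, q(K_1)\, h(2)\,h(pq)\,h(2pq),\qquad q(K_1)=[E(K_1):E(\Q(\sqrt2))E(K)E(F_0)].$$
Here $h(2)=1$. The field $F_0=\Q(\sqrt{2pq})$ has three ramified primes, so its $2$-class group has $2$-rank $2$. We determine its $4$-rank by a R\'edei matrix whose entries are $\kron{2}{p}=-1$, $\kron{2}{q}=1$, $\kron{p}{q}$ and quartic symbols. The unit index $q(K_1)\in\{1,2,4\}$ is decided by checking which of $\epsilon_2\epsilon_D\epsilon_{2D}$, $\epsilon_D\epsilon_{2D}$, and so on are squares in $K_1$. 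We do this using the norms of $\epsilon_D,\epsilon_{2D}$ and the standard descriptions $\epsilon=\tfrac12(\sqrt a+\sqrt b)^2$ for units of norm $+1$.

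In cases 1--3 the aim is to show that the $2$-parts satisfy $h(K_1)=h(K)$. Chevalley's ambiguous class formula for $K_1/K$ supports this: with only one ramified prime, $|A_1^{G}|=|A_0|/[E(K):E(K)\cap N K_1^\times]\le |A_0|$. Since $G$ is a $2$-group, $A_1\neq 1$ forces $A_1^G\neq 1$, and a finer count (for instance $A_1/A_1^{1-\sigma}\simeq A_0$) should force $A_1\simeq A_0$ once the index of norms is computed. In case 4 the hypothesis $N(\epsilon_{2D})=-1$ together with $q\equiv 1\bmod 16$ is what lets us repeat the argument one level up. We apply Chevalley to $K_2/K_1$ and the class number formula to $K_2/\mathbf{Q}_1$, whose subfields are $K_1$, $F_1$ and $\mathbf{Q}_2$, and obtain $|A_2|=|A_1|$.

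The main obstacle is the unit-index computations: $q(K_1)$ in cases 1--3, and especially the corresponding index for $K_2/\mathbf{Q}_1$ in case 4. These require deciding exactly which products of fundamental units are squares, and I would handle them through explicit square-class analysis using the norm and congruence data.
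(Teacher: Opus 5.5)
The paper gives no proof of this corollary; it is quoted from \cite[Cor.~3.3.3]{MR4879114}. The only related argument in the text is Proposition~\ref{th: case (5,1)}. It obtains $\lambda_2(K)=0$ when $A_0\simeq\Z/2\Z$ from $|B_1'|=2/[E(K):N(E(K_1))]$, the fundamental system $\{\epsilon_D,\epsilon_{2D},\epsilon_2\}$ of $K_1$, and Theorem~\ref{th: Greenberg split}, and it says nothing about the groups $A_n$.

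Your framework (one totally ramified dyadic prime, surjective norms, Fukuda) is the right one for the stabilization claims, and your $A_0$ is correct. The gap is that the decisive equalities $|A_1|=|A_0|$ and $|A_2|=|A_1|$ are never established, and the Chevalley route you suggest cannot establish them. At the unique dyadic place $v$ of $K$ one has $(u,2)_{K_v}=(N_{K/\Q}u,2)_{\Q_2}=(\pm1,2)_{\Q_2}=1$ for every $u\in E(K)$. Hence $E(K)\subset N(K_1^\times)$ and $|A_1^{G}|=|A_0|$ in all four cases. Together with $A_1/A_1^{1-\sigma}\simeq A_0$, this is still compatible with, say, $A_0\simeq\Z/2\Z$ and $A_1\simeq\Z/4\Z$ with $\sigma$ acting as $-1$.

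So everything rests on Theorem~\ref{kuroda}: $|A_1|=|A_0|$ if and only if $q(K_1)\,|A(F_0)|=4$. Your sketch determines none of $N(\epsilon_{2D})$, $A(F_0)$ or $q(K_1)$.
\begin{itemize}
\item Case~1 does close. The R\'edei matrix of $8pq$ has rank $2$, and since $2pq$ is a sum of two squares the narrow class of $(\sqrt{2pq})$ is a square, hence trivial. So $N(\epsilon_{2D})=-1$ and $|A(F_0)|=4$. The fundamental system quoted in Proposition~\ref{th: case (5,1)} then gives $|A_1|=2q(K_1)=2$.
\item For case~4 you give no argument. Fukuda from level $1$ needs $q(K_2)\,|A(F_1)|=8$ by Theorem~\ref{th: lemmermeyer biquadratic class number}. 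That requires a generalized R\'edei computation for $F_1=\mathbf Q_1(\sqrt{(2+\sqrt2)pq})$ and a square-class analysis of $E(K_1)E(F_1)E(\mathbf Q_2)$ as in Section~7. You never say where $N(\epsilon_{2D})=-1$ or $q\equiv 1\bmod 16$ would enter.
\end{itemize}

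In cases 2--3 the missing computation actually collides with the hypotheses as stated. Since $\kron{p}{q}=1$, the factorization $8pq=q\cdot 8p$ is a $C_4$-splitting, so $|\Cl_2^+(F_0)|\ge 8$. Then $q(K_1)|A(F_0)|=4$ forces $q(K_1)=1$, $N(\epsilon_{2D})=+1$ and $|\Cl_2^+(F_0)|=8$.

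Let $\fk t,\fk p,\fk q$ be the primes of $F_0$ above $2,p,q$. The Artin symbols of $\fk q$ and of $\fk t\fk p$ in an unramified cyclic quartic extension of $F_0$ are R\'edei symbols. By R\'edei reciprocity they evaluate to $\kron{2}{q}_4\kron{p}{q}_4$ and $\kron{q}{2}_4\kron{q}{p}_4$ respectively.
\begin{itemize}
\item The $8$-rank of $\Cl^+(F_0)$ is $1$ iff both symbols are $+1$.
\item If both are $-1$, then $N(\epsilon_{2D})=-1$ is forced, since otherwise one of $\fk q$, $\fk t\fk p$ would be narrowly principal and have trivial symbol.
\end{itemize}
The same bookkeeping reproduces the statement $N(\epsilon_{2D})=1$, $A(F_0)\simeq(\Z/2\Z)^2$ used in Section~7.

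Under the hypotheses of case~2 or case~3, the two symbols differ exactly when $\kron{2}{q}_4\ne\kron{q}{2}_4$. Otherwise Theorem~\ref{kuroda} gives $|A_1|\ge 2|A_0|$, and $A_n\simeq A_0$ cannot hold. So cases 2--3 as transcribed lack what appears to be a standing assumption of the source, $\kron{2}{q}_4\ne\kron{q}{2}_4$ (compare item~4 in the list of remaining cases). Your proof has to invoke it, and then still prove $q(K_1)=1$.
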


\subsection{Remaining Cases}
When $K= \Q(\sqrt{pq})$, there are only a few remaining cases for proving that $\lambda_2(K)=0$. These are:

\begin{enumerate}
    \item $(p,q)\equiv(3,1) \mod 8$ and $\paren{\frac{q}{p}}=1$ or $2^{\frac{q-1}{4}}\equiv 1 \mod q$ 
    \item $(p,q) \equiv (7,7) \mod 8$
    \item $(p,q)\equiv(1,1) \mod{8}$.
    \item $(p,q) \equiv (5,1) \mod 8$ and $\paren{\frac{2}{q}}_4= \paren{\frac{q}{2}}_4$.
    \item $(p,q) \equiv (5,1) \mod 8$ and $\paren{\frac pq}=1$, $\paren{\frac{2}{q}}_4 \neq \paren{\frac{q}{2}}_4$, $\paren{\frac{p}{q}}_4= \paren{\frac{q}{p}}_4=1$ and    $N(\epsilon_D)=1$
    \item $p \equiv 5 \mod 8, \quad q \equiv 1 \mod 16$, $\paren{\frac{2}{q}}_4 \neq \paren{\frac{q}{2}}_4$, $\paren{\frac{p}{q}}_4= -\paren{\frac{q}{p}}_4$  and $N(\epsilon_{2D})=1$
    \item $p \equiv 5 \mod 8, \quad q \equiv 9 \mod 16$, $\paren{\frac{2}{q}}_4 \neq \paren{\frac{q}{2}}_4$,  $\paren{\frac{p}{q}}_4= -\paren{\frac{q}{p}}_4$.
    
\end{enumerate}

\section{Greenberg's criterion}
In this section, we recall Greenberg's criterion for the vanishing of the $\lambda$- invariant. 
\subsection{Strongly Ambiguous Group}
We start by recalling some basic definitions and facts.
\begin{definition}
Let $L/K$ be a Galois extension of number fields . If $\mfc$ is an ideal in $L$, we denote by $[\mfc]$ its class in the class group $Cl(L)$.
    \begin{enumerate}
        \item Define $Am(L/K)=\{[\mfc] \in \Cl(L): [\mfc]^\sigma = [\mfc]$ for all $\sigma \in \Gal(L/K)$ \}.
        \item Define $\Amst(L/K) = \{ [c] \in \Cl(L): \mfc^\sigma=\mfc$ for all $\sigma \in \Gal(L/K) \}$.
    \end{enumerate}
\end{definition}
When we're working with the cyclotomic $\Z_p$-extension $K_\infty/K$, we denote $\Am(K_n/K)_p, \Amst(K_n/K)_p$ simply by $B_n$ and $B_n'$ respectively, where $\Am(K_n/K)_p, \Amst(K_n/K)_p$ represent the $p$-parts.\\

An important theorem of Chevalley states the following: 

\begin{theorem}{\textbf{Chevalley's Theorem}} \label{th: Chevalley}
Suppose $L/K$ is a cyclic Galois extension of number fields of degree $n$. Then \begin{enumerate}[(1)]
\item $$\Amst(L/K)=h_K \frac{\prod\limits_\fk p e_\fk p}{n [E(K):N(E(L))] }$$
\item $$\Am(L/K)=h_K \frac{\prod\limits_\fk p e_\fk p}{n [E(K):E(K)\cap N(L^\times)] }$$
\end{enumerate}
    where the product varies over all places (archimidean and non-archimidean) of $K$, $e_\fk p$ represents the ramification index, $h_K$ is the class number of $K$ and $N$ is the norm map.
\end{theorem}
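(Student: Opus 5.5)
We prove Chevalley's theorem with idèle-theoretic class field theory and the Herbrand quotient of the units. Write $G=\Gal(L/K)=\langle\sigma\rangle$, cyclic of order $n$. One remark first: the displayed formulas should be read as statements about orders, $|\Amst(L/K)|$ and $|\Am(L/K)|$. The ingredients are the exact sequences
$$1\to E(L)\to L^\times\to P(L)\to 1,\qquad 1\to P(L)\to I(L)\to \Cl(L)\to 1,$$
where $I(L)$ is the ideal group and $P(L)$ the group of principal ideals. We take $G$-invariants of these sequences and count with Tate cohomology $\hat H^i(G,\cdot)$.

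\textbf{Step 1 (strongly ambiguous classes).} The group $I(L)^G$ is generated by $I(K)$ together with the ramified prime ideals $\mathfrak P$ above $\mathfrak p$, and $\mathfrak P^{e_\mathfrak p}$ lies in $I(K)$. Hence
$$[I(L)^G: I(K)]=\prod_{\mathfrak p\ \text{finite}} e_\mathfrak p.$$
The strongly ambiguous classes form the image of $I(L)^G$ in $\Cl(L)$, so $|\Amst|=[I(L)^G:P(L)^G\cap I(L)^G]$ with $P(L)^G\subseteq I(L)^G$. We compare this with $I(K)/P(K)$, which has order $h_K$:
$$|\Amst|=h_K\cdot\frac{[I(L)^G:I(K)]}{[P(L)^G:P(K)]}.$$
Taking invariants of the first sequence and using Hilbert 90 ($H^1(G,L^\times)=1$) gives
$$1\to E(K)\to K^\times\to P(L)^G\to H^1(G,E(L))\to 1,$$
so $[P(L)^G:P(K)]=|H^1(G,E(L))|$.

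\textbf{Step 2 (Herbrand quotient).} For cyclic $G$ we have $|\hat H^0(G,E(L))|=[E(K):N E(L)]$. Herbrand's unit theorem states
$$\frac{|\hat H^0(G,E(L))|}{|H^1(G,E(L))|}=\frac{\prod_{v\ \text{arch}} e_v}{n}.$$
We obtain it from Dirichlet's theorem by comparing $E(L)\otimes\Q$ with the permutation module on the archimedean places of $L$. This gives
$$|H^1(G,E(L))|=\frac{n\,[E(K):NE(L)]}{\prod_{v\ \text{arch}}e_v}.$$
Substituting into Step 1 yields formula (1).

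\textbf{Step 3 (ambiguous classes).} Take $G$-invariants of $1\to P(L)\to I(L)\to\Cl(L)\to1$:
$$1\to P(L)^G\to I(L)^G\to \Cl(L)^G\to H^1(G,P(L))\to H^1(G,I(L)).$$
The group $I(L)$ is a direct sum of induced modules over the primes of $K$, so $H^1(G,I(L))=0$ by Shapiro's lemma. Therefore
$$|\Am|=|\Amst|\cdot|H^1(G,P(L))|.$$
From the first sequence, using $H^1(G,L^\times)=1$ and the identification $H^2(G,\cdot)=\hat H^0$ for cyclic $G$, we get
$$H^1(G,P(L))\cong\ker\big(\hat H^0(G,E(L))\to\hat H^0(G,L^\times)\big)=\frac{E(K)\cap N(L^\times)}{N(E(L))}.$$
Combining this with formula (1), the index $[E(K):N E(L)]$ becomes $[E(K):E(K)\cap N L^\times]$, which is formula (2).

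\textbf{Main obstacle.} The delicate point is Herbrand's unit index computation in Step 2, in particular the archimedean ramification factor (real places of $K$ that become complex in $L$). The cleanest route is to use the fact that the Herbrand quotient of a finite module is trivial. We choose a $G$-stable lattice in $E(L)$, built from the Minkowski–Dirichlet units, that is commensurable with $\Z[S_\infty(L)]$ modulo the trivial summand $\Z$. The quotient of $\Z[S_\infty(L)]$ then equals $\prod_{v}h(\Z[G/G_v])=\prod_v e_v$, and dividing by $h(\Z)=n$ gives the stated formula.
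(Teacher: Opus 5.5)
Your argument is correct. It is the standard cohomological proof of Chevalley's formula: Hilbert 90 gives $P(L)^G/P(K)\cong H^1(G,E(L))$, Herbrand's unit index gives $h(E(L))=\prod_{v\mid\infty}e_v/n$, Shapiro's lemma gives $H^1(G,I(L))=0$, and periodicity identifies $H^1(G,P(L))$ with $(E(K)\cap N(L^\times))/N(E(L))$. The paper gives no argument of its own and only refers to Lang's \emph{Cyclotomic Fields}, so your proof supplies the reasoning that this citation stands for.

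One small correction in Step~1. When a ramified $\mathfrak p$ also splits, the individual primes $\mathfrak P\mid\mathfrak p$ are not $G$-invariant. So $I(L)^G$ is generated by $I(K)$ and the products $\prod_{\mathfrak P\mid\mathfrak p}\mathfrak P$, and it is the $e_{\mathfrak p}$-th power of this product that equals $\mathfrak p\mathcal O_L$. The index $\prod_{\mathfrak p}e_{\mathfrak p}$ is unaffected.
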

\begin{proof}
    See \cite{lang2012cyclotomic}
\end{proof}

\subsection{Greenberg's criterion}

\begin{theorem}[Greenberg] \label{th: Greenberg split}
    Fix a prime $p$ and assume $K$ is a real quadratic number field such that there exists a unique prime $\mfp$ above p and that it's totally ramified in the cyclotomic $\Z_p$- extension $K_\infty/K$. Then 
    $$ B_n'=1 \text{ for some } n \in \N \Rightarrow \mu_p(K)=\lambda_p(K)=0.$$ 
    Also, if $A_0$ capitulates in $K_\infty$, then the same conclusion holds. 
\end{theorem}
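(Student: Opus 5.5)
The plan is to reduce everything to Greenberg's classical characterization: $\lambda_p(K)=\mu_p(K)=0$ if and only if $|A_n|$ is bounded as $n\to\infty$. Under the hypothesis of the theorem, namely a unique prime $\mfp$ over $p$ that is totally ramified in $K_\infty/K$, genus theory via Chevalley's Theorem \ref{th: Chevalley} controls the invariants $A_n^{\Gamma_n}$, where $\Gamma_n=\Gal(K_n/K)$.

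Applying Theorem \ref{th: Chevalley}(2) to the cyclic extension $K_n/K$, the only ramified place is $\mfp$, with $e_\mfp=p^n=[K_n:K]$. Hence
\[
|B_n| = \frac{|A_0|}{[E(K):E(K)\cap N(K_n^\times)]}\le |A_0|,
\]
so $|B_n|$ is bounded independently of $n$. The same computation with (1) bounds $|B_n'|$. Moreover, $B_n'$ is generated by the image of $A_0$ under extension of ideals together with the class of the unique prime $\mfp_n$ above $\mfp$. Since $\mfp_n^{p^n}=\mfp\,\ot_{K_n}$, the class of $\mfp_n$ comes from $K$ up to $p^n$-th roots, and one checks that $B_n'$ is the image of the classes generated by $A_0$ and $[\mfp_n]$.

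The key step, which I expect to be the main obstacle, is passing from "$B_n'$ trivial" to "$A_m$ bounded". Let $X=\varprojlim A_n$ and $\gamma$ a topological generator of $\Gamma$. Because $\mfp$ is totally ramified, the standard Iwasawa argument gives $A_n\simeq X/\nu_n Y$, where $Y$ is a $\Lambda$-submodule with $X/Y\simeq A_0$ and $\nu_n=1+\gamma+\dots+\gamma^{p^n-1}$. I will relate $B_n'$ to the kernel of $1-\gamma$ on $A_n$ modulo the part $B_n/B_n'$, which is bounded by the unit index.

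First I show that the norm $A_m\to A_n$ is surjective for $m\ge n$, by total ramification. I then show that $B_n'=1$ for some $n$ forces the classes of primes above $p$ and the extended classes of $A_0$ to become trivial. The class of $\mfp_n$ equals the class of $\mfp_m^{p^{m-n}}$, and the image of $A_0$ in $A_m$ factors through $A_n$ and dies there. Consequently, for all $m\ge n$, the strongly ambiguous part $B_m'$ is generated by the $p^{m-n}$-th power of $[\mfp_m]$, which is the image of something already trivial, so $B_m'=1$.

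Next, $[B_m:B_m']$ is bounded by $[E(K)\cap N(K_m^\times):N(E(K_m))]$, which is at most the index of $E(K)^{p^m}$-type quotients. For a real quadratic field $E(K)\simeq \pm\epsilon^\Z$, so this index is at most $2p$ or so, giving $|A_m^{\Gamma}|=|B_m|$ uniformly bounded. I still need to check that this is in fact $\le |B_m'|$ times a constant. Since $A_m$ is a finite $p$-group on which $\Gamma$ acts through a $p$-group, a standard lemma gives $|A_m|\le |A_m^\Gamma|^{?}$ only after iterating the ambiguous filtration. So instead I use the module structure: $X/(\gamma-1)X$ is bounded, so $X$ is finitely generated over $\Z_p$ and $\mu=0$.

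To get $\lambda=0$, I show that $(\gamma-1)$ acts nilpotently modulo a bounded piece on $X$. Equivalently, the characteristic polynomial of $X$ has no root, which follows because bounded $B_m$ together with the stabilization $B_m'=1$ forces the $\Gamma$-invariants of $X\otimes\Q_p$ to vanish. Finally, $\Gamma$ acts on the finite-dimensional space $X\otimes\Q_p$ through a pro-$p$ group with unipotent image, so zero invariants imply $X\otimes\Q_p=0$, giving $\lambda=0$.

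For the capitulation statement, if $A_0$ capitulates in $K_m$ then the image of $A_0$ in $A_m$ vanishes, and $[\mfp_m]^{p^m}$ is the image of $[\mfp]\in A_0$. Hence $B_{m'}'$ is killed for $m'$ large, which reduces this case to the first part.
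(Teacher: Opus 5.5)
\textbf{There is a genuine gap: the step meant to produce $\mu=\lambda=0$ fails, and it cannot work as designed.} Every bound you rely on holds unconditionally under the hypotheses. Since $\mathfrak p$ is the only ramified place and $e_{\mathfrak p}=p^m$, Chevalley's formula gives $|B_m|\le |A_0|$ for \emph{every} $m$, whatever $\lambda$ and $\mu$ are. With a unique totally ramified prime one has $Y=(\gamma-1)X$, so $X/(\gamma-1)X\cong A_0$ is always finite. Neither fact controls $X$:
\begin{itemize}
\item Finiteness of $X/(\gamma-1)X$ does not make $X$ finitely generated over $\Z_p$; take $X=\Lambda/(p)$.
\item $\gamma$ does not act unipotently on $X\otimes\Q_p$, because $T=\gamma-1$ is only topologically nilpotent. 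For $X=\Lambda/(T-p)$, $\gamma$ acts as $1+p$, so $X^\Gamma=0$, yet $\lambda=1$ and every $X/\omega_m X$ is finite.
\item In fact $(X\otimes\Q_p)^\Gamma=0$ is automatic here because $X/TX$ is finite, so your final step carries no information.
\end{itemize}
There is also an earlier error. $D_m$ is generated by $[\mathfrak p_m]$ itself, not by $[\mathfrak p_m]^{p^{m-n}}=[\mathfrak p_n\ot_{K_m}]$, so triviality of the latter does not give $B_m'=1$. The conclusion $B_m'=1$ for $m\ge n$ is in fact true, since $[E(K):N(E(K_m))]$ is non-decreasing in $m$ in part (1) of Chevalley's formula, but not for the reason you give. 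The same faulty inference reappears in your capitulation paragraph. Nothing there shows that capitulation of $A_0$ kills $[\mathfrak p_m]$, so the second assertion is not reduced to the first.

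\textbf{The missing idea is that the implication runs the other way.} Extended ideals are $\Gal(K_n/K)$-invariant, so $i_{0,n}(A_0)\subseteq B_n'$. Hence $B_n'=1$ immediately says that $A_0$ capitulates in $K_n$. Everything then rests on ``capitulation of $A_0$ implies $\lambda=\mu=0$''. This is Greenberg's Theorem~1, which is exactly what the paper invokes after proving $B_n'=i_{0,n}(A_0)D_n$.

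You already had the tools to prove this directly:
\begin{itemize}
\item Put $\omega_m=(1+T)^{p^m}-1=T\nu_m$. Then $A_m\cong X/\omega_m X$, and the norm $A_m\to A_0$ is the projection onto $X/TX$.
\item Since $i_{0,m}\circ N_{K_m/K}=\nu_m$ on $A_m$ and the norm is surjective, $i_{0,m}$ is multiplication by $\nu_m$ from $X/TX$ to $X/\omega_m X$.
\item Capitulation in $K_m$ therefore means $\nu_m X\subseteq \nu_m TX$, i.e.\ $X=TX+X[\nu_m]$.
\item Nakayama applied to $X/X[\nu_m]$ gives $\nu_m X=0$. Hence $\omega_m X=0$ and $X\cong A_m$ is finite, so $\mu=\lambda=0$.
\end{itemize}
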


\begin{proof}
    This follows from \cite[Theorem~1]{MR401702}. We briefly sketch the argument here.

    Let $i_{0,n}: A_0\to A_n$. Write $\mfp$ for the prime in $K$ above $p$ and $D_n$ for the subgroup of $A_n$ consisting of ideal classes which divide  $p$. 
    
    \textbf{Claim:} $B_n'= i_{0,n}(A_0) D_n$. 
    
    Assuming the claim, $B_n'=1$ implies that $i_{0,n}(A_0)=1$. Therefore, every ideal class in $A_0$ becomes eventually principal. Hence, \cite[Theorem~1]{MR401702} implies that $\mu_p(K)=\lambda_p(K)=0$.

    \textbf{Proof of claim:} Since $\fk p$ is totally ramified in $K_\infty/K$, the primes of
$K_n$ above $\fk p$ are fixed by $\Gal(K_n/K)$. Hence,
$D_n\subseteq B_n'$. Also, the extension of any ideal class from
$K$ to $K_n$ is strongly ambiguous, so
$$
i_{0,n}(A_0)D_n\subseteq B_n'.
$$

Conversely, let $[\fk c]\in B_n'$, and choose a representative
$\fk c$ fixed by $\Gal(K_n/K)$. Split
$$
\fk c=\fk c_{\fk p}\fk c',
$$
where $\fk c_{\fk p}$ is supported at the primes above
$\fk p$, and $\fk c'$ is prime to $\fk p$. Then
$[\fk c_{\fk p}]\in D_n$. Since $K_n/K$ is unramified outside
$\fk p$, the $\Gal(K_n/K)$-invariant ideal $\fk c'$ is the extension of
an ideal $\fk b'$ of $K$, say
$$
\fk c'=\fk b'\mathcal O_{K_n}.
$$
Thus
$$
[\fk b'\mathcal O_n]
=
[\fk c]\,[\fk c_{\fk p}]^{-1}\in A_n.
$$
Taking norms to $K$, we obtain
$$
N_{K_n/K}\bigl([\fk b'\mathcal O_n]\bigr)
=
[\fk b']^{[K_n:K]}
=
[\fk b']^{p^n}.
$$
Since the left-hand side is the norm of a class in $A_n$, it lies in
$A_0$. Hence
$$
[\fk b']^{p^n}\in A_0.
$$
But if a class $x\in \Cl(K)$ satisfies $x^{p^n}\in A_0$, then its
prime-to-$p$ part must be trivial; therefore $x\in A_0$. Applying this to
$x=[\fk b']$, we get $[\fk b']\in A_0$. Consequently
$$
[\fk c']
=
[\fk b'\mathcal O_n]
\in i_{0,n}(A_0),
$$
and since $[\fk c_{\fk p}]\in D_n$, we conclude that
$$
[\fk c]\in i_{0,n}(A_0)D_n.
$$
Therefore,
$$
B_n'=i_{0,n}(A_0)D_n.
$$
    
\end{proof}

 The splitting case is more subtle. Recall the decomposition we saw in the above proof $$B_n'=i_{0,n}(A_0)D_n.$$

Greenberg proves the following

\begin{theorem}[Greenberg] \label{th: Greenberg non split}
    Let $K$ be a totally real number field  in which $p$ splits completely. Assume Leopoldt's conjecture is valid for $K$. Then $$\lambda_p(K)=\mu_p(K)=0 \iff B_n=D_n \quad \text{for all n sufficiently large}  $$
\end{theorem}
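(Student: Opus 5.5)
This is Greenberg's criterion for totally real fields in which $p$ splits completely, and I would prove both directions through the asymptotic behaviour of the orders $|A_n|$ and of the ambiguous subgroups $B_n$.

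Since $\mu_p=\lambda_p=0$ is equivalent to $|A_n|$ being bounded, I would argue in terms of boundedness. Write $X=\varprojlim A_n$, $\Gamma=\Gal(K_\infty/K)$ with topological generator $\gamma$, and $\omega_n=\gamma^{p^n}-1$.

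First I would compute $|B_n|$ using part (2) of Theorem~\ref{th: Chevalley} for the cyclic extension $K_n/K$ of degree $p^n$. Only the primes above $p$ ramify, and for large $n$ they are totally ramified from some fixed layer $K_{n_0}$ on. Because $p$ splits completely, there are $[K:\Q]=d$ such primes. Taking $p$-parts, $|B_n|$ equals $|A_0|$ times $p^{n d'}$ (with $d'\le d$ accounting for the initial non-totally-ramified layers) divided by $p^n[E(K):E(K)\cap N(K_n^\times)]$. A unit is a norm from $K_n^\times$ exactly when it is a local norm at each ramified prime. Here Leopoldt's conjecture enters: the $p$-adic closure of $E(K)$ in $\prod_{\mfp\mid p}U_{\mfp}^1$ has $\Z_p$-rank $d-1$. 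The local norm groups from $K_{n,\mfp}$ shrink like $p^n$ in each factor, and the product formula imposes one linear relation. So the unit index grows like $p^{n(d-1)}$ up to a bounded factor, and hence $|B_n|$ stays bounded as $n\to\infty$. Similarly, $D_n$ is the image of the group generated by the $d$ primes above $p$. Its order is bounded, and it is contained in $B_n$ for large $n$ because these primes are $\Gamma$-stable.

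Next I would relate $B_n$ to $A_n^{\Gamma}$ and to $X$. For large $n$, $A_n\cong X/\nu_{n_0,n}Y_{n_0}$ by Iwasawa's description, with $Y_{n_0}$ the relevant submodule. If $\lambda=\mu=0$, then $X$ is finite, so for large $n$ the projection $X\to A_n$ is an isomorphism, and the norm maps $A_m\to A_n$ are isomorphisms while the lifting maps $i_{n,m}$ are multiplication by $p^{m-n}$ on $X$. Every class then eventually capitulates, and classes in $B_n$ not coming from $D_n$ are killed. Concretely, I would show that $B_n/D_n$ injects into the capitulation kernel via a $\hat H^{-1}$ / $H^1(\Gamma_n,E)$ computation. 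Comparing orders through Chevalley, and using boundedness and stabilization, the index $[B_n:D_n]$ must eventually be $1$. Conversely, suppose $B_n=D_n$ for all large $n$. The genus formula shows that the stable value of $|B_n|$ equals $|X^{\Gamma}|$-type invariants modulo the image of the decomposition group. If $X$ were infinite, the fixed points $A_n^{\Gamma_n}$ would contain elements of $X/\omega_n X$ of increasing order. I would then argue that $A_n/A_n^{1-\gamma}$ grows, forcing $B_n\supsetneq D_n$ eventually. This is Greenberg's argument via the sequence $X/\omega_n X\to A_n$.

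The main obstacle is the Leopoldt step: making precise that the unit norm index grows exactly like $p^{n(d-1)}$, and then matching the converse with the structure of $X$. I would follow Greenberg's original paper, where the key identity $|B_n|=|D_n|\cdot|X_{\Gamma}\text{-torsion part}|$ for large $n$ is established. Both directions then reduce to $X$ being finite if and only if the quotient $B_n/D_n$ eventually vanishes.
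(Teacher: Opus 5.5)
The paper does not prove this statement; it only quotes Greenberg's Theorem~2. So your sketch has to stand on its own. Its first step is fine: Chevalley plus Leopoldt shows that $|B_n|$ is bounded, indeed eventually constant. (No $d'$ is needed: since $K_v=\Q_p$, every prime above $p$ is totally ramified from $K$ on.) But neither implication is actually proved, and the steps you propose for them fail.

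\textbf{The direction $B_n=D_n\Rightarrow\lambda=\mu=0$.} You argue that if $X$ were infinite, then $A_n/A_n^{1-\gamma}$ would grow and force $B_n\supsetneq D_n$. This cannot work. Since $A_n$ is finite, the endomorphism $\gamma-1$ has kernel and cokernel of equal order, so $|A_n/A_n^{\gamma-1}|=|A_n^{\Gal(K_n/K)}|=|B_n|$, which your own first step bounds. For the same reason the $\Gal(K_n/K)$-fixed points of $A_n$ are just $B_n$ and cannot contain elements of increasing order. No count of $B_n$ detects the growth of $A_n$; the hypothesis has to be used through capitulation.
\begin{itemize}
\item Since $\mathfrak P_{n,v}\mathcal O_{K_m}=\mathfrak P_{m,v}^{p^{m-n}}$ and $|D_m|\le|B_m|$ is bounded, $D_n$ dies in $K_m$ for $m\gg n$.
\item Suppose $B_m=D_m$ for all large $m$, and $c\in A_n$ has $\Gamma$-invariant image in $A_\infty$. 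Then $i_{n,m}(c)\in B_m=D_m$ for some $m$, so $c$ capitulates.
\item Hence the image of $A_n$ in $A_\infty$ has no nonzero $\Gamma$-fixed points, so it is trivial, i.e.\ $H_n=A_n$.
\end{itemize}
You then still need Greenberg's Theorem~1 (total capitulation implies $\lambda=\mu=0$), which your sketch never invokes.

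\textbf{The direction $\lambda=\mu=0\Rightarrow B_n=D_n$.} Saying that $B_n/D_n$ injects into the capitulation kernel gives no information. When $X$ is finite, every class of $A_n$ capitulates for large $n$, including those of $D_n$, so the ensuing ``comparison of orders'' has nothing behind it. One route that does work, for odd $p$:
\begin{itemize}
\item The decomposition $B_n'=i_{0,n}(A_0)D_n$ from the proof of Theorem~\ref{th: Greenberg split} holds verbatim here. Since $A_0$ capitulates, $B_n'=D_n$ for large $n$.
\item One has $B_n/B_n'\cong (E(K)\cap N(K_n^\times))/N(E(K_n))$, with norms on the left corresponding to inclusions on the right.
\item Once the norm maps $A_m\to A_n$ are $\Gal(K_m/K)$-isomorphisms, $N(B_m)=B_n$. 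So every class of this unit quotient contains a unit that is a norm from $K_m$, for each $m\ge n$.
\item Work in $E(K)\otimes\Z_p$, using $E(K)^{p^n}\subseteq N(E(K_n))$. Compactness and Leopoldt then give a representative that is a universal local norm, hence torsion, hence trivial.
\end{itemize}

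Finally, the identity $|B_n|=|D_n|\cdot|X_\Gamma\text{-torsion}|$ on which you rest both directions is false. Under Leopoldt $X_\Gamma$ is finite. For odd $p$ it is the Galois group over $K_\infty$ of the maximal abelian pro-$p$ extension of $K$ unramified outside $p$, and its order equals the stable value of $|B_n|$ from your Chevalley computation. So the identity would force $D_n=1$. That fails whenever $D_0\neq 1$, because the norm maps $D_n$ onto $D_0$.
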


As before, $E(K)$ denotes the group of units of $K$ and $U_v$ the group of local units of $K_v$, for some valuation $v$. Let $$\rho_n:E(K)/E(K)^{p^n} \to \prod_{v|p} U_v/U_v^{p^n}$$ be the natural map induced by the embeddings $K \hookrightarrow K_v$.
\\
We define the following property: 
\begin{equation}
    \label{eq: property Pn}
    (P_n): \quad \ker(\rho_n)=0
\end{equation}
Finally, on the cycolotomic $\Z_p$-extension $K_\infty/K$, we define  $$H_n=\ker(A_n\to A_\infty)$$\\
Greenberg proves:

\begin{theorem}[Greenberg]\label{th: greenberg split Bn}
Let $K$ be a totally real number field. Assume $p$ splits completely in $K$ and that $K$ satisfies the property $(P_n)$ for all $n$. Then $$B_n=B_n'$$ If furthermore $H_0=A_0$, then $B_n=D_n$ for all n suffiently large and $\lambda_p(K)=\mu_p(K)=0$
    
\end{theorem}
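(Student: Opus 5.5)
The plan has two parts. First I show that $(P_n)$ for all $n$ forces $B_n=B_n'$, by comparing the two formulas of Chevalley's Theorem \ref{th: Chevalley}. Then I add the hypothesis $H_0=A_0$ and use the decomposition $B_n'=i_{0,n}(A_0)D_n$ from the proof of Theorem \ref{th: Greenberg split} to get $B_n=D_n$, which gives the conclusion via Theorem \ref{th: Greenberg non split}.

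\textbf{Step 1: reduce $B_n=B_n'$ to a unit index.} Apply Chevalley's Theorem to the cyclic extension $K_n/K$ of degree $p^n$. The two formulas there differ only in the unit indices. Since $B_n'\subseteq B_n$, taking $p$-parts gives
$$
[B_n:B_n']=\frac{[E(K):N(E(K_n))]}{[E(K):E(K)\cap N(K_n^\times)]}=[E(K)\cap N(K_n^\times):N(E(K_n))].
$$
So it suffices to show that every unit of $K$ that is a norm of an element of $K_n^\times$ is already a norm of a unit.

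\textbf{Step 2: local norms and $(P_n)$.} The extension $K_n/K$ is unramified outside $p$, and since $p$ splits completely in $K$, each $K_v$ with $v\mid p$ is $\Q_p$. The completion of $K_n$ at $v$ is therefore the $n$-th layer of the cyclotomic $\Z_p$-extension of $\Q_p$, which is totally ramified. Its local unit norm group $N(U_{n,w})$ is then the subgroup of $U_v$ of index $p^n$ corresponding to it under local class field theory.

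Let $\varepsilon\in E(K)\cap N(K_n^\times)$. Then $\varepsilon$ is a local norm everywhere. At the primes above $p$, a unit that is a local norm of an element is the norm of a local unit, up to the valuation adjustment, which is trivial because $\varepsilon$ is a unit and the extension is totally ramified. So $\varepsilon$ is a local unit norm at all places.

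The key point is the comparison between norms from units and global norms. I would use the standard exact sequence (Greenberg's argument) identifying the cokernel
$$
(E(K)\cap N(K_n^\times))/N(E(K_n))
$$
with a quotient controlled by $H^1(\Gal(K_n/K),E(K_n))$ and the kernel of $\rho_n$. The idea is that if $\ker\rho_n=0$, then the map from units to the product of the local groups $U_v/U_v^{p^n}$ is injective, and this forces the obstruction to vanish. Concretely, by an index computation via Herbrand quotients of $E(K_n)$, both indices are expressed through the rank of the image of $E(K)$ in $\prod_{v\mid p}U_v/N(U_{n,w})$. Injectivity of $\rho_n$, combined with Leopoldt, makes these ranks equal. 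I expect this to be the main obstacle: it needs a careful Herbrand quotient computation,
$$
h(E(K_n))=\frac{\prod_{v\mid\infty}\text{(local degrees)}}{p^n},
$$
which is trivial at infinity since $K_n$ is totally real, together with a diagram chase matching $\hat H^0(E(K_n))$ with the image under $\rho_n$.

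\textbf{Step 3: the second assertion.} Assume $H_0=A_0$. Choose $n$ large enough that $i_{0,n}(A_0)$ dies in $A_\infty$. Because $A_0$ is finite and $H_0=A_0$, we can moreover pick $n$ so that $i_{0,m}(A_0)=1$ for all $m\ge n$. Then
$$
B_m=B_m'=i_{0,m}(A_0)D_m=D_m \quad\text{for all } m\ge n.
$$
Leopoldt's conjecture follows from $(P_n)$ for all $n$, since $(P_n)$ gives injectivity of $\overline{E(K)}\otimes\Z_p\to\prod_{v\mid p}U_v$. Theorem \ref{th: Greenberg non split} then yields $\lambda_p(K)=\mu_p(K)=0$.
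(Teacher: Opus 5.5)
Steps 1 and 3 are sound. Comparing the two Chevalley formulas does reduce $B_n=B_n'$ to $E(K)\cap N(K_n^\times)=N(E(K_n))$; the quotient is killed by $p^n$, so passing to $p$-parts is harmless. The decomposition $B_n'=i_{0,n}(A_0)D_n$ uses only that the primes above $p$ are totally ramified and that $K_n/K$ is unramified outside $p$, so it holds in the split case. Your remark that $(P_n)$ for all $n$ gives Leopoldt, by passing to the inverse limit, is also correct.

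The gap is Step 2, which carries the entire content of $B_n=B_n'$ and which you leave as the ``main obstacle''. The route you sketch cannot close it. The Herbrand quotient $h(E(K_n))=p^{-n}$, combined with the description of $H^1(\Gal(K_n/K),E(K_n))$ through principal ideals, is exactly how Chevalley's formula is proved. So it can only reproduce the two formulas you already compared in Step 1. Also, only $[E(K):E(K)\cap N(K_n^\times)]$ equals the order of the image of $E(K)$ in $\prod_{v\mid p}U_v/N(U_{n,w})$ (by the Hasse norm theorem). The index $[E(K):N(E(K_n))]$ has no such a priori description, and Leopoldt plays no role in this first assertion.

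The missing idea is to identify the local unit norms with $p^n$-th powers, and you were one line away from it. We have $U_v^{p^n}=N_{K_{n,w}/K_v}(U_v)\subseteq N(U_{n,w})$. For $p$ odd, both subgroups have index $p^n$ in $U_v\simeq\Z_p^\times$, so $N(U_{n,w})=U_v^{p^n}$. Hence $(E(K)\cap N(K_n^\times))/E(K)^{p^n}=\ker\rho_n$, and $(P_n)$ gives
\[
E(K)\cap N(K_n^\times)=E(K)^{p^n}=N_{K_n/K}(E(K))\subseteq N(E(K_n)),
\]
the reverse inclusion being trivial.

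The case $p=2$, which is the one the paper needs, requires more care. Here $[U_v:U_v^{2^n}]=2^{n+1}$ and $N(U_{n,w})=\{\pm1\}U_v^{2^n}$, so you need two further remarks:
\begin{enumerate}
\item If $\varepsilon\in E(K)\cap N(K_n^\times)$, then $\varepsilon^2$ is trivial under $\rho_{n+1}$. So $(P_{n+1})$ gives $\varepsilon\in\{\pm1\}E(K)^{2^n}$, which also shows the signs at different dyadic places agree.
\item $-1\in N(E(K_n))$. Chevalley's formula for $\mathbf Q_n/\Q$ reads $|\Amst(\mathbf Q_n/\Q)|=[\{\pm1\}:N(E(\mathbf Q_n))]^{-1}$, which forces $-1\in N(E(\mathbf Q_n))$. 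Since $K\cap\mathbf Q_n=\Q$, the norm $N_{K_n/K}$ restricts to $N_{\mathbf Q_n/\Q}$ on $\mathbf Q_n$.
\end{enumerate}
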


\begin{proof}
    See \cite[Theorem~2]{MR401702} and the comments following it.
\end{proof}

\begin{remark}\label{rmk: property P_n}
    Our cases of interest are real quadratic field $K=\Q(\sqrt D)$ and $p=2$. In this case, it is easy to check that the property $(P_n)$ holds for all $n$ if and only if the fundamental unit $\epsilon_D$ satisfies that $$v_2(\log_p(\epsilon_D))=2$$. This is proven in Lemma \ref{lem: property P_n}
\end{remark}

\section{Genus theory and \Redei Matrix}
In this section, we recall the definition and some properties of the \Redei matrix associated to a real quadratic number field. We start with some important results on biquadratic extensions. 

\subsection{Biquadratic Extensions}

We define a biquadratic extension as any extension of number fields $L/K$ such that
$$
\mathrm{Gal}(L/K) \simeq \Z / 2 \Z \times \Z / 2 \Z.
$$
Most of the work on imaginary biquadratic extensions $L/\mathbb{Q}$ was done by
Kuroda \cite{MR21031}. For real biquadratic extensions, we have the following result
by Kubota, commonly known as the Kubota--Kuroda formula.

\begin{theorem}[Kubota--Kuroda {\cite{MR83009}}] \label{kuroda}
Let $L/\mathbb{Q}$ be a totally real biquadratic extension, with unit group $E(L)$
and $2$-class group $A(L)$. Let $L_1, L_2,$ and $L_3$ be the quadratic subfields of $L$.
Let $\epsilon_i$ be the fundamental unit of $L_i$, for $i=1,2,3$.
Let
$$
q(L) := \bigl[E(L) : \langle -1, \epsilon_1, \epsilon_2, \epsilon_3 \rangle \bigr]
$$
be the Hasse unit index of $L$. Then
$$
|A(L)| = \frac{1}{4}\, q(L)\, |A(L_1)|\, |A(L_2)|\, |A(L_3)|.
$$
\end{theorem}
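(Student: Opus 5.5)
We prove the Kubota--Kuroda formula with the analytic class number formula, using the Dedekind zeta factorization
\[
\zeta_L(s)=\zeta(s)\,L(s,\chi_1)L(s,\chi_2)L(s,\chi_3)=\frac{\zeta_{L_1}(s)\zeta_{L_2}(s)\zeta_{L_3}(s)}{\zeta(s)^2},
\]
where $\chi_i$ is the quadratic character attached to $L_i$. We compare the residues at $s=1$ on both sides and then take $2$-parts.

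First we write the residues. For the totally real field $L$ of degree $4$ (so $r_1=4$, $w=2$) we have
\[
\operatorname{res}_{s=1}\zeta_L(s)=\frac{2^4 h_L R_L}{2\sqrt{d_L}}.
\]
For each real quadratic $L_i$ we have
\[
\operatorname{res}_{s=1}\zeta_{L_i}(s)=\frac{4h_iR_i}{2\sqrt{d_i}}=\frac{2h_i\log\epsilon_i}{\sqrt{d_i}}.
\]
The conductor--discriminant formula gives $d_L=d_1d_2d_3$, so the square roots cancel. The factorization therefore yields
\[
8h_LR_L=8\,h_1h_2h_3\,\log\epsilon_1\log\epsilon_2\log\epsilon_3 ,
\]
that is, $h_LR_L=h_1h_2h_3\log\epsilon_1\log\epsilon_2\log\epsilon_3$.

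The key step is to relate $R_L$ to the regulator of the subgroup $E'=\langle -1,\epsilon_1,\epsilon_2,\epsilon_3\rangle$. Since $E'$ has finite index $q(L)$ in $E(L)$, we get $R(E')=q(L)R_L$. We compute $R(E')$ by choosing embeddings indexed by $\mathrm{Gal}(L/\mathbb{Q})=\{1,\sigma_1,\sigma_2,\sigma_3\}$, with $\sigma_i$ fixing $L_i$. Then $\log|\epsilon_i^{\tau}|$ equals $\pm\log\epsilon_i$, with sign $+$ exactly when $\tau\in\{1,\sigma_i\}$. The regulator is the absolute value of a $3\times3$ minor of this sign matrix, scaled by $\prod\log\epsilon_i$. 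Deleting the column for $\sigma_3$, the rows are $(1,-1,1)$, $(1,1,-1)$, $(1,-1,-1)$, whose determinant has absolute value $4$. Hence $R(E')=4\prod_i\log\epsilon_i$, and so
\[
R_L=\frac{4}{q(L)}\prod_i\log\epsilon_i .
\]

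Substituting this into the identity above gives $h_L=\tfrac14 q(L)h_1h_2h_3$. Here $q(L)$ divides $8$, since $E(L)^2\subseteq E'$ up to torsion: for $\varepsilon\in E(L)$ one has $\varepsilon^2=\pm\prod_i N_{L/L_i}(\varepsilon)$ up to sign, and $E(L)/E'$ has rank $0$ and exponent $2$. So $q(L)$ is a power of $2$. Taking $2$-parts on both sides, the odd parts of $h_L$ and $h_1h_2h_3$ agree, and $|A(L)|=\tfrac14 q(L)|A(L_1)||A(L_2)||A(L_3)|$.

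The main obstacle is the determinant step: tracking the signs $\log|\epsilon_i^{\tau}|=\pm\log\epsilon_i$ and normalizing the regulator correctly, including the factor $2^{r_1}$ and $w=2$. Errors there would change the constant $\tfrac14$. I would double-check the constant on a known example such as $\mathbb{Q}(\sqrt2,\sqrt5)$.
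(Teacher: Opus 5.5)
Your proof is correct. The residue comparison, using $d_L=d_1d_2d_3$, gives $h_LR_L=h_1h_2h_3\log\epsilon_1\log\epsilon_2\log\epsilon_3$. The $3\times 3$ sign determinant is indeed $\pm 4$, so $R(E')=4\prod_i\log\epsilon_i=q(L)R_L$; here $-1\in E'$ ensures the index modulo torsion equals $q(L)$. Since $E(L)/E'$ is finite of exponent $2$, $q(L)$ is a power of $2$, which is exactly what you need to pass to $2$-parts.

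The paper gives no proof of this statement; it only cites Kubota. So there is no argument in the paper to compare yours against. Yours is the standard analytic derivation, and it proves the stronger identity $h_L=\tfrac14 q(L)h_1h_2h_3$ for full class numbers before restricting to $2$-parts.

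One small point: exponent $2$ alone only shows that $q(L)$ is a power of $2$. Your sharper claim $q(L)\mid 8$ also needs $E'\supseteq\langle -1\rangle E(L)^2$ together with $[E(L):\langle -1\rangle E(L)^2]=8$. This is not used anywhere in your argument, so it does not affect correctness.
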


Further, a system $\Gamma$ of fundamental units of $L$ (F.S.U.) is one of the following
possibilities:
\begin{enumerate}
  \item $\{\epsilon_1, \epsilon_2, \epsilon_3\}$,
  \item $\{\sqrt{\epsilon_1}, \epsilon_2, \epsilon_3\}$,
  \item $\{\epsilon_1, \sqrt{\epsilon_2}, \epsilon_3\}$,
  \item $\{\sqrt{\epsilon_1 \epsilon_2}, \epsilon_2, \epsilon_3\}$,
  \item $\{\sqrt{\epsilon_1 \epsilon_2}, \epsilon_2, \sqrt{\epsilon_3}\}$,
  \item $\{\sqrt{\epsilon_1 \epsilon_2}, \sqrt{\epsilon_1 \epsilon_3}, \sqrt{\epsilon_2 \epsilon_3}\}$,
  \item $\{\sqrt{\epsilon_1 \epsilon_2 \epsilon_3}, \epsilon_2, \epsilon_3\}$.
\end{enumerate}

Any unit $\epsilon_i$ that appears under the square root sign is assumed to have
norm equal to $1$, except for the last case, where all the units can have the same
norm, either all $1$ or all $-1$.\\

The theorem has been generalized to biquadratic extensions over arbitrary real number fields by Lemmermeyer \cite{MR1276992}. We get the following for a biquadratic extension over $\mathbf {Q}_1$:
\begin{theorem} \label{th: lemmermeyer biquadratic class number}
    Let $L/\mathbf Q_1$ be a biquadratic extension. Let $L_i$ for $i=1,2,3$ be the three quadratic subextensions of $L/\mathbf Q_1$ and $h_i=h(L_i)$. Then $$h(L)=\frac 18 q(L) \cdot h_1\cdot h_2 \cdot h_3,$$ where $q(L)=[E(L):E(L_1)E(L_2)E(L_3)]$ is the Hasse unit index.\\
    
    Furthermore, if $L=F(\sqrt{d})$ and $u \in F^\times$, then $$u \in L^2 \iff \begin{cases} u \in F^2 \text{ or } \\ 
        u \in dF^2 
    \end{cases}$$ 
\end{theorem}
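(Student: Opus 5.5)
The plan is to treat the two assertions of Theorem~\ref{th: lemmermeyer biquadratic class number} separately: the class number formula via Brauer-type relations and analytic class number formulas, and the Kummer-theoretic criterion for squares directly.

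\textbf{Class number formula.} Let $G=\Gal(L/\mathbf Q_1)\simeq \Z/2\Z\times\Z/2\Z$, with $L_1,L_2,L_3$ the three quadratic subextensions. I would use the character decomposition of Dedekind zeta functions,
\[
\zeta_L(s)\,\zeta_{\mathbf Q_1}(s)^2=\zeta_{L_1}(s)\,\zeta_{L_2}(s)\,\zeta_{L_3}(s),
\]
which follows from $\zeta_{L_i}=\zeta_{\mathbf Q_1}L(s,\chi_i)$ and $\zeta_L=\zeta_{\mathbf Q_1}\prod_i L(s,\chi_i)$. I would then compare residues at $s=1$ via the analytic class number formula $\operatorname{res}\zeta_F=2^{r_1}(2\pi)^{r_2}h_FR_F/(w_F\sqrt{|d_F|})$.

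The discriminant terms cancel by the conductor--discriminant formula: $d_L\, d_{\mathbf Q_1}^2=d_{L_1}d_{L_2}d_{L_3}$. Since $h(\mathbf Q_1)=1$, the remaining step is to compare regulators and the archimedean factors. Here $L$ may be totally real, CM, or of mixed signature, and the statement is written uniformly; in our application $K_2$ is totally real, so I would carry that case through. In that case the $2^{r_1}$ factors give $2^8\cdot 2^{4}/2^{12}=1$, and $w=2$ everywhere gives a factor $w_L w_{\mathbf Q_1}^{-2}\cdot\prod_i w_{L_i}^{-1}\cdot$ (arranged appropriately) $=2^{-2}\cdot$ const.

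The regulator comparison is the key computation. Let $E_0=E(L_1)E(L_2)E(L_3)$, whose rank equals that of $E(L)$ (namely $7=3\cdot 3-2$, after accounting for the shared $E(\mathbf Q_1)$ of rank $1$). Then $R(E_0)=q(L)\,R_L$. On the other hand, $R(E_0)$ can be computed from the block structure of the logarithmic embedding. Each $E(L_i)$ splits up to finite index into $E(\mathbf Q_1)$ and relative units, and the relative units live in orthogonal eigenspaces for $G$. This gives
\[
R(E_0)=2^{a}\,\frac{R_1R_2R_3}{R_{\mathbf Q_1}^2},
\]
with an explicit power of $2$ coming from the factor-$2$ scaling of log embeddings and from the index $[E(L_i):E(\mathbf Q_1)E_i^{-}]\in\{1,2\}$. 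The latter index should cancel against the analogous terms.

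Collecting the powers of $2$ should give $h_L=\tfrac18 q(L)h_1h_2h_3$. The main obstacle is this bookkeeping of the power of $2$ in the regulator comparison: the ambiguity $[E(L_i):E(\mathbf Q_1)E_i^-]$ has to be handled carefully. Rather than redoing it, I would cite Lemmermeyer's general formula $h_L=2^{d-\kappa-2-v}q(L)h_1h_2h_3/h_F^2$ for $L/F$ with $[F:\Q]=d$. Specializing to $F=\mathbf Q_1$ with $d=2$, $h_F=1$, $\kappa=d=2$ in the totally real case, and $v=1$ (since $\sqrt2\notin$ the relevant fields in our applications) gives exponent $2-2-2-1=-3$, i.e.\ the factor $\tfrac18$. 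The $v$-term is where I would be most careful to check the hypotheses.

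\textbf{Square criterion.} Let $L=F(\sqrt d)$ and $u\in F^\times$ with $u=x^2$ for some $x\in L$. Write $x=a+b\sqrt d$ with $a,b\in F$. Then $x^2=a^2+db^2+2ab\sqrt d$, and since $u\in F$, we get $ab=0$. If $b=0$ then $u=a^2\in F^2$; if $a=0$ then $u=db^2\in dF^2$. The converse is immediate, since $d=(\sqrt d)^2$ is a square in $L$.

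In the application, $F$ ranges over the quadratic subfields of $L/\mathbf Q_1$ and $L$ is the biquadratic field. This criterion is exactly the tool used later in the square-class analysis of $E(K_1)E(F_1)E(\mathbf Q_2)$.
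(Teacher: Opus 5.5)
Your proof of the square criterion is correct and complete; it uses only that $1,\sqrt d$ is an $F$-basis of $L$. For the class number formula, your final move, citing Lemmermeyer, is exactly what the paper does: it gives no proof and simply quotes \cite{MR1276992}. You are also right to restrict to totally real $L$. For $F=\mathbf{Q}_1$ the factor $\tfrac18$ fails as soon as an infinite place of $\mathbf{Q}_1$ ramifies in $L$; for instance, for totally imaginary $L$ one gets $2^{-1-v}$.

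The genuine problem is how you specialize $h(L)=2^{d-\kappa-2-v}q(L)h_1h_2h_3/h_F^2$. The parameters mean the following:
\begin{itemize}
\item $d$ is the number of infinite places of $F$ that ramify in $L/F$;
\item $\kappa$ is the $\Z$-rank of $E(F)$;
\item $v=1$ exactly when $L=F(\sqrt{\epsilon},\sqrt{\eta})$ with $\epsilon,\eta\in E(F)$, and $v=0$ otherwise.
\end{itemize}
For $F=\mathbf{Q}_1$ and $L$ totally real, this gives $d=0$ and $\kappa=1$. It also gives $v=0$, because a totally positive unit of $\mathbf{Q}_1$ is a square (as $N(\epsilon_2)=-1$), so no totally real $L$ has that form. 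The exponent is therefore $0-1-2-0=-3$. Your values $d=\kappa=2$, $v=1$ are all wrong and produce $-3$ only by coincidence. Your stated reason for $v=1$ cannot be right, since $\sqrt2\in\mathbf{Q}_1\subseteq L$ in every case.

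Your analytic sketch does not close this gap on its own, because it stops at the only nontrivial step. Its roots-of-unity bookkeeping is also off: $w_Lw_{\mathbf{Q}_1}^2=8=w_1w_2w_3$, so these factors cancel outright. The residue comparison then gives exactly $h_LR_LR_{\mathbf{Q}_1}^2=h_1h_2h_3R_1R_2R_3$. The theorem is therefore equivalent to $R(E_0)=8R_1R_2R_3/R_{\mathbf{Q}_1}^2$, i.e.\ $a=3$.

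Your outline does prove this once it is completed:
\begin{itemize}
\item Let $E_i^-$ be the relative units of $L_i/\mathbf{Q}_1$ and $c_i=[E(L_i):E(\mathbf{Q}_1)E_i^-]\in\{1,2\}$.
\item The $G$-isotypic components of the log space are orthogonal. Log vectors scale by $\sqrt2$ under $L_i\subset L$ and by $2$ under $\mathbf{Q}_1\subset L$. Together these show that $\ell(E(\mathbf{Q}_1)E_1^-E_2^-E_3^-)$ has covolume $16\sqrt2\,c_1c_2c_3R_1R_2R_3/R_{\mathbf{Q}_1}^2$.
\item The $c_i$ cancel because the natural surjection $\prod_i E(L_i)/E(\mathbf{Q}_1)E_i^-\to E_0/E(\mathbf{Q}_1)E_1^-E_2^-E_3^-$ is injective. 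To see this, project log vectors onto the $i$-th isotypic component. Injectivity then reduces to showing that a unit $x\in E(L_i)$ with $G$-invariant log vector lies in $E(\mathbf{Q}_1)$. If not, then $x^\sigma=-x$ for the nontrivial $\sigma\in\Gal(L_i/\mathbf{Q}_1)$. This would make $L_i=\mathbf{Q}_1(\sqrt{x^2})$ with $x^2$ a totally positive, hence square, unit, which is impossible.
\item Dividing the resulting covolume of $\ell(E_0)$ by $\sqrt8$ gives $a=3$.
\end{itemize}
The fact that makes the $c_i$ cancel, namely that totally positive units of $\mathbf{Q}_1$ are squares, is the same fact that forces $v=0$ in the correct specialization of Lemmermeyer's formula.
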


\subsection{Genus Theory}
 We recall some standard results in classical genus theory:

\begin{theorem}[Genus Theory]\label{th: Genus theory}
Let $K=\Q(\sqrt{D})$ be a real quadratic number field with discriminant $d_K$. Then  
$$rk_{2}(Cl^+(K))=\omega (d_{K})-1,$$
where $\omega (d_{K})=\#\{p | d_{K}\}$ and $Cl^+(K)$ denotes the narrow class group. Also,
$$[Cl^{+}(K):Cl(K)]=\begin{cases}
     1 \quad N(\epsilon_{D})=-1 
     \\ 2 \quad N(\epsilon_{D})=1 
    \end{cases}$$

\end{theorem}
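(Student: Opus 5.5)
The statement just given is classical genus theory, so the plan is to reproduce the standard argument via ambiguous classes for the quadratic extension $K/\Q$ and then compare the narrow and ordinary class groups.

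\textbf{The $2$-rank formula.} Let $\sigma$ be the generator of $\Gal(K/\Q)$. Since $\mfc\,\mfc^\sigma=(N\mfc)$ is principal in the narrow sense (generated by a positive rational number), $\sigma$ acts as $-1$ on $Cl^+(K)$. Hence the ambiguous classes of $Cl^+(K)$ are exactly its $2$-torsion, and $\rk_2(Cl^+(K))=\log_2\#\Am^+(K/\Q)$. I would compute $\#\Am^+(K/\Q)$ with the narrow analogue of Chevalley's formula (Theorem \ref{th: Chevalley}(2)), taking $h_\Q=1$ and $n=2$. In the narrow setting the infinite places are not counted as ramified, and the norm index is taken in $\Q^\times_{>0}$. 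Thus
$$\#\Am^+(K/\Q)=\frac{2^{\omega(d_K)}}{2\,[\{1\}:\{1\}]}=2^{\omega(d_K)-1}.$$
Equivalently, one can use the usual Chevalley formula with all places: the ramification product is then $2^{\omega(d_K)}\cdot 2$, the real place contributes the extra factor $2$, and the unit index is $[\{\pm1\}:\{\pm1\}\cap N(K^\times)]=2$, because $-1$ is not a norm from the real field $K$. To pass from strongly ambiguous to ambiguous classes, I would check the standard fact that in the narrow class group every ambiguous class contains an ambiguous ideal. The reason is the Hilbert 90 argument: if $\mfc^{\sigma-1}=(\alpha)$ with $\alpha\gg0$, then $N\alpha=\pm1$ becomes $+1$, so $\alpha=\beta^{\sigma-1}$, and $\mfc\beta^{-1}$ is an ambiguous ideal. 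The ambiguous ideals modulo rational ideals are generated by the $\omega(d_K)$ ramified primes. Among their products, the only narrowly principal one is $(\sqrt{D})$ when $D>0$ and ... This relation count is where care is needed. The cleanest route is to trust the Chevalley count above.

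\textbf{The index formula.} Next, the natural surjection $Cl^+(K)\to Cl(K)$ has kernel $P/P^+$, the principal ideals modulo the totally positive principal ones. This quotient is isomorphic to $K^\times/(K^\times_{\gg0}E(K))$. Now $K^\times/K^\times_{\gg0}\cong\{\pm1\}^2$ via the signs at the two real embeddings. Dividing further by the image of $E(K)=\langle -1,\epsilon_D\rangle$: the unit $-1$ has sign $(-,-)$, and $\epsilon_D$ has sign $(+,+)$ or $(-,+)$ up to multiplying by $-1$, according as $N\epsilon_D=1$ or $-1$. So the image of the units is of order $2$ when $N\epsilon_D=1$ and all of $\{\pm1\}^2$ when $N\epsilon_D=-1$. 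The kernel therefore has order $2$ or $1$ respectively, which is the second claim.

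\textbf{Main obstacle.} The only delicate point is the bookkeeping in the narrow Chevalley count: correctly handling the infinite place and the unit norm index so that one gets exactly $2^{\omega(d_K)-1}$ rather than an off-by-two error. I would verify this against a small example, for instance $\Q(\sqrt{3})$, which has $\omega=2$, narrow class number $2$, and $N\epsilon=1$.
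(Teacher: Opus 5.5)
The paper gives no proof of this statement. It is quoted as classical genus theory, so there is no argument of the authors to compare with.

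Your main line is the standard proof and it is sound:
\begin{itemize}
\item $\sigma$ acts by inversion on $\Cl^+(K)$, since $\mfc\mfc^\sigma=(N\mfc)$ with $N\mfc>0$. Hence $\Cl^+(K)[2]$ is exactly the group of narrow ambiguous classes.
\item The narrow ambiguous class number formula over $\Q$ (finite ramified primes only, totally positive units $\{1\}$) gives $2^{\omega(d_K)}/2$.
\item Your sign computation on $P/P^+\cong K^\times/K^\times_{\gg0}E(K)$ correctly gives the index $1$ or $2$.
\end{itemize}
Note that this narrow formula is \emph{not} Theorem~\ref{th: Chevalley}, which concerns ordinary classes only. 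You must cite it as a separate standard result, or replace it by a direct count (see the last paragraph).

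Two of your side remarks are false and should be removed.

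\textbf{The ``equivalent'' ordinary computation.} What you wrote is the computation for an \emph{imaginary} quadratic field. For real $K$ the infinite place of $\Q$ is unramified, so the ramification product is $2^{\omega(d_K)}$. Also, $-1$ can be a norm from $K$, for instance $N(1+\sqrt2)=-1$. Theorem~\ref{th: Chevalley}(2) counts ambiguous classes of $\Cl(K)$, not of $\Cl^+(K)$, and yields $2^{\omega(d_K)-1}/[\{\pm1\}:\{\pm1\}\cap N(K^\times)]$. This equals $1$ for $\Q(\sqrt3)$. Your two errors cancel, and the $\Q(\sqrt3)$ check you propose would expose this.

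\textbf{The Hilbert~90 paragraph.} It is redundant on the Chevalley route, since part (2) already counts ambiguous rather than strongly ambiguous classes. As written it also has two errors.
\begin{itemize}
\item You must choose $\beta\gg0$, so that $\mfc\beta^{-1}$ stays in the narrow class of $\mfc$. This is possible because $\beta^\sigma/\beta\gg0$ forces $N\beta>0$.
\item The relation is not $(\sqrt D)$ in general. Since $N(\sqrt D)=-D<0$, the ideal $(\sqrt D)$ is narrowly principal only when $N(\epsilon_D)=-1$. In $\Q(\sqrt3)$, neither $\mathfrak p_2=(1+\sqrt3)$ nor $\mathfrak p_3=(\sqrt3)$ is narrowly principal, but $\mathfrak p_2\mathfrak p_3=(3+\sqrt3)$ with $3+\sqrt3\gg0$ is.
\end{itemize}

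\textbf{How to finish the direct count correctly.} The narrowly principal ambiguous ideals modulo rational ideals form a group of order exactly $2$. To see this, let $E(K)^+$ be the totally positive units and consider $(\gamma)\mapsto\gamma^{1-\sigma}\in E(K)^+/(E(K)^+)^2\simeq\Z/2\Z$, with $\gamma\gg0$.
\begin{itemize}
\item It is well defined: changing $\gamma$ by $u\in E(K)^+$ multiplies the image by $u^{1-\sigma}=u^2$.
\item It is injective: if $\gamma^{1-\sigma}=u^2=u^{1-\sigma}$, then $\gamma/u\in\Q$, so $(\gamma)$ is rational.
\item It is surjective: apply Hilbert~90 to the generator of $E(K)^+$, again choosing the resulting $\beta\gg0$.
\end{itemize}
Together with the fact that the $\omega(d_K)$ ramified primes generate the ambiguous ideals modulo rational ones, this gives exactly $2^{\omega(d_K)-1}$ narrow ambiguous classes. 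No ambiguous class number formula is needed.
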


Naturally, we need a criterion to determine the value of the norm of the fundamental unit. Scholz comes to the rescue:

\begin{theorem}[Scholz, \cite{MR1545490, MR309898}]\label{th: Nm(ed)=-1}
Let $D=pq$ with $p,q$ odd primes and $p\equiv q\equiv 1 \mod 4$.
Put $K=\Q(\sqrt{D})$, let $N=N_{K/\Q}$ be the norm,
let $h$ (resp.\ $h^+$) be the (resp.\ narrow) class number of $K$,
and let $\epsilon_D$ be a fundamental unit of $K$.

\begin{enumerate}
\item If $\left(\frac{p}{q}\right)=-1$, then
$h \equiv h^+ \equiv 2 \mod 4,
\qquad\text{and}\qquad
N(\epsilon_D)=-1.$

\item If $\left(\frac{p}{q}\right)=1$, then:
\begin{enumerate}
\item[(i)] If $\left(\frac{p}{q}\right)_4 = -\left(\frac{q}{p}\right)_4$, then $h^+ \equiv 4 \mod 8,
\qquad 2h \equiv 4 \mod 8,
\qquad\text{and}\qquad
N(\epsilon_D)=1.$

\item[(ii)] If $\left(\frac{p}{q}\right)_4=\left(\frac{q}{p}\right)_4=-1$, then
$h \equiv h^+ \equiv 4 \mod 8,
\qquad\text{and}\qquad
N(\epsilon_D)=-1.$

\item[(iii)] If $\left(\frac{p}{q}\right)_4=\left(\frac{q}{p}\right)_4=1$, then
$h^+ \equiv 0 \mod 8.$
\end{enumerate}
\end{enumerate}

\end{theorem}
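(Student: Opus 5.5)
The plan is to reduce everything to genus theory for the narrow class group together with R\'edei's description of the $4$-rank, and then relate the narrow and ordinary class groups through the sign of $N(\epsilon_D)$. Let $D=pq$ with $p\equiv q\equiv 1 \mod 4$. Then $d_K=pq$, and by Theorem~\ref{th: Genus theory} the group $Cl^+(K)$ has $2$-rank $1$, so its $2$-part is cyclic. This gives $h^+\equiv 2\mod 4$ exactly when the $4$-rank is $0$. R\'edei's criterion says the $4$-rank is positive iff the genus characters vanish on the ramified ideals, that is, iff $\kron{p}{q}=1$, because the ramified prime $\mathfrak p$ above $p$ lies in the principal genus iff $\kron{p}{q}=1$.

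\textbf{Case (1).} If $\kron pq=-1$, then $h^+\equiv 2\mod 4$. For the unit norm I will use the standard link: $N(\epsilon_D)=-1$ iff the class of $(\sqrt D)$ is trivial in $Cl^+$. Equivalently, I will argue via the ambiguous classes. The ambiguous classes of $Cl^+$ are generated by $[\mathfrak p],[\mathfrak q]$, with $[\mathfrak p\mathfrak q]=[(\sqrt D)]$. Since the $2$-part of $Cl^+$ is cyclic, there are exactly $2$ ambiguous classes, so one relation among $\mathfrak p,\mathfrak q,\sqrt D$ holds in $Cl^+$. The classes $[\mathfrak p]$ and $[\mathfrak q]$ are nontrivial in $Cl^+$, since their genus character values are $\kron pq=-1$ and $\kron qp=-1$. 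Hence the relation must be $[\mathfrak p\mathfrak q]=1$ in $Cl^+$, i.e.\ $\sqrt D$ is equivalent to a totally positive generator. This forces $-1\in N(E)$, so $N(\epsilon_D)=-1$. Then $h=h^+$.

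\textbf{Case (2).} If $\kron pq=1$, the $4$-rank is $1$, so $4\mid h^+$. The $8$-rank is governed by the R\'edei--Reichardt condition on the quartic symbols: the square class $[\mathfrak p]$ is a fourth power in $Cl^+$ iff the appropriate quartic characters vanish. Scholz's reciprocity $\kron pq_4\kron qp_4=\kron{\epsilon_p}{q}$ then connects this to norm questions. The three subcases go as follows.
\begin{itemize}
\item[(i)] If the quartic symbols differ, Scholz's classical argument shows that $\epsilon_p$ is not a norm in the relevant sense. This gives $N(\epsilon_D)=1$ together with $h^+\equiv 4\mod 8$, so $h=h^+/2$ and $2h\equiv 4\mod 8$.
\item[(ii)] If both symbols are $-1$, the $8$-rank vanishes, so $h^+\equiv 4 \mod 8$. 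A descent (solvability of $x^2-Dy^2=-4$, via the identification of the square class of $\mathfrak p$) gives $N(\epsilon_D)=-1$.
\item[(iii)] If both symbols are $+1$, the R\'edei--Reichardt $8$-rank criterion gives $8\mid h^+$.
\end{itemize}

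The main obstacle is the unit-norm determination in (i) and (ii). This requires Scholz's reciprocity law and a careful R\'edei-matrix computation of which ambiguous class equals $[(\sqrt D)]$. Since these are classical, I would cite Scholz \cite{MR1545490} and the exposition in \cite{MR309898} for these steps rather than reprove them.
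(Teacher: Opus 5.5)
The paper gives no proof of this statement; it quotes Scholz and Kaplan. For part (2) your proposal ends up in the same place, since the $8$-rank and the sign of $N(\epsilon_D)$ are again left to those references. For part (1), however, you give a genuine self-contained argument, and it is correct:
\begin{itemize}
\item $r_2(\Cl^+(K))=1$ gives $|\Cl^+(K)[2]|=2$.
\item $[\mathfrak p]$ and $[\mathfrak q]$ lie in $\Cl^+(K)[2]$, because $\mathfrak p^2=(p)$ and $\mathfrak q^2=(q)$ with $p,q>0$.
\item Both classes are nontrivial, because the genus characters give $\kron{N\mathfrak p}{q}=\kron{p}{q}=-1$ and $\kron{N\mathfrak q}{p}=\kron{q}{p}=-1$.
\item Hence $[\mathfrak p]=[\mathfrak q]$, so $[(\sqrt D)]=[\mathfrak p\mathfrak q]=1$ in $\Cl^+(K)$, which forces a unit of norm $-1$.
\end{itemize}
Two simplifications are available. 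You only need the order of $\Cl^+(K)[2]$, not the full description of the ambiguous classes. The same genus-character computation also shows that the unique element of order $2$ is not a square, so $h^+\equiv 2 \bmod 4$ follows without R\'edei. Your argument is worth having because part (1) is the only part of Scholz's theorem that the proof of Theorem~\ref{th: main th} uses, via Lemma~\ref{lem: A0=C2} and the later uses of $N(\epsilon_D)=-1$. So it removes this citation from the main result. Part (2)(i), which is used only in Proposition~\ref{th: case (5,1)}, still rests on the references in both versions.

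One claim in your outline of (2) is wrong as stated and should not be kept: the $8$-rank is not detected by whether $[\mathfrak p]$ is a fourth power in $\Cl^+(K)$. Suppose $N(\epsilon_D)=1$ and take $\epsilon_D>1$. Then the ideal $(1+\epsilon_D)$ is ambiguous and has a totally positive generator. It is neither rational nor equal to $(\sqrt D)$ up to a rational factor. Hence exactly one of $\mathfrak p,\mathfrak q$ is trivial in $\Cl^+(K)$. In case (i), label so that this one is $\mathfrak p$. Then $[\mathfrak p]=1$ is trivially a fourth power, although $h^+\equiv 4 \bmod 8$. The correct test is whether the nontrivial element of $\Cl^+(K)[2]$ lies in $\Cl^+(K)^4$. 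Which of $[\mathfrak p],[\mathfrak q],[(\sqrt D)]$ that element is depends on $N(\epsilon_D)$ and on which of $\mathfrak p,\mathfrak q$ is narrowly principal. Untangling this coupling is the real content of Scholz's theorem. So is the implication from $\kron{\epsilon_p}{q}=-1$ to $N(\epsilon_D)=1$, which your outline asserts without justification. Part (2) is therefore established only by the citation, exactly as in the paper.
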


We therefore have a complete classification of the $2$-rank of the class group. For the $4$-rank, we use the \Redei matrix.

\subsection{\Redei-Reichardt Matrix, \texorpdfstring{\cite{MR1581397}}{cite}}
Let $K=\Q(\sqrt{D})$ with discriminant $d_K= \prod_{i=1}^r \ell_i$. Define $$
\ell^*:=(-1)^{(\ell-1)/2}\,\ell = \begin{cases}
\phantom{-}\ell,& \ell\equiv 1\mod 4 \\
-\ell & \ell\equiv 3\mod 4.
\end{cases}
$$
and if $2\mid d_K$, define
$$
2^*=\begin{cases}
-4 & D\equiv 3\mod 4 \\
\phantom{-}8 & D\equiv 2\mod 8 \\
-8 & D\equiv 6\mod 8.
\end{cases}
$$

Define the $r\times r$ matrix $R=(a_{ij})$ over $\F_2$ by:,
$$
(-1)^{a_{ij}}=\Bigl(\frac{\ell_j^*}{\ell_i}\Bigr) \quad \text{for} \quad i \neq j,
$$ 
where $\bigl(\frac{\cdot}{\ell_i}\bigr)$ is the Kronecker symbol,
and the diagonal entries are chosen so that each row sums to $0$ in $\F_2$:
$$
a_{ii}=\sum_{j\neq i} a_{ij}.
$$
Then 
$$
r_4\!\bigl(\Cl^+(K)\bigr)=r-1-\rank_{\F_2}(R)
= r_2\!\bigl(\Cl^+(K)\bigr)-\rank_{\F_2}(R).
$$

Specializing this to the case of interest: \\

Let $D=pq$ be squarefree and odd.
\begin{itemize}
\item If $pq\equiv 1\mod 4$, then $d_K=pq$ and 
$$
R=\begin{pmatrix} a & a \\ a & a \end{pmatrix},
\qquad
\rank(R)=
\begin{cases}
0,& \bigl(\frac{q^*}{p}\bigr)=+1,\\
1,& \bigl(\frac{q^*}{p}\bigr)=-1,
\end{cases}
$$
and therefore
\begin{equation} \label{eq: rk4 for 1 mod 4}
r_4\!\bigl(\Cl^+(K)\bigr)=1-\rank(R)=
\begin{cases}
1,& \bigl(\frac{q^*}{p}\bigr)=+1,\\
0,& \bigl(\frac{q^*}{p}\bigr)=-1.
\end{cases}
\end{equation}

\item If $pq\equiv 3\mod 4$, then $d_K=4pq$ and $r=3$. Assume, without loss of generality, that $(p,q)=(1,3) \mod 4$.
A simple case by case study shows that $$
\rank_{\F_2}(R)=
\begin{cases}
2, & \left(\dfrac{p}{q}\right)=-1,\\
1, & \left(\dfrac{p}{q}\right)=+1 \ \text{and}\ p\equiv 1\mod 8, \\
2, & \left(\dfrac{p}{q}\right)=+1 \ \text{and}\ p\equiv 5\mod 8.
\end{cases}
$$

Consequently,
\begin{equation} \label{eq: rk4 for 3 mod 4}
r_4 \bigl(\Cl^+(K)\bigr)= 2-\rank_{\F_2}(R)=
\begin{cases}
0, & \left(\dfrac{p}{q}\right)=-1,\\
1, & \left(\dfrac{p}{q}\right)=+1 \ \text{and}\ p\equiv 1\mod 8, \\
0, & \left(\dfrac{p}{q}\right)=+1 \ \text{and}\ p\equiv 5\mod 8.
\end{cases}
\end{equation}

\end{itemize}

\subsection{Hilbert Symbols}
We plan to generalize the theory of \Redei matrices to any quadratic extension $E/F$ with $h_F=1$. For this, we need to recall Hilbert symbols briefly.

Let $K/\Q_p$ be a local field with place $v$ and with $p$ odd. For $r,s \in K^\times$, the Hilbert symbol $(r,s)_p$ is defined as follows: 
Write $r=\pi^a u$ and $s=\pi^b v$ where $\pi$ is a uniformizer of $K$ and $u,w$ are units. Then
\begin{equation}\label{eq: hilber symbol odd prime}
(r,s)_v = (-1)^{ab \frac{N-1}{2}} \kron{u}{\pi}^b \kron{v}{\pi}^a
\end{equation} where $N=|\ot_K/\pi|$.
\\
Over the $2$-adics, again writing $r= 2^a u$ and $s=2^b v$, we have 
\begin{equation}
    {\displaystyle (r,s)_{2}=(-1)^{{\frac{u-1}{2}\frac{v-1}{2}+a \frac{v-1}{2}+b \frac{u^2-1}{8}}} } 
\end{equation} 
Quadratic reciprocity gives us $$\prod_v (r,s)_v=1$$ where the product is taken over all places of $K$.

\subsection{Generalized \Redei matrix \texorpdfstring{\cite{MR2452635}}{citation}}

Let $F$ be a real number field with class number $h_F=1$ and $E/F$ a real quadratic extension, with $E=F(\sqrt{\delta})$. Write $\Gal(E/F)=\{1,\sigma\}$, $S=\{\fk p_1,\dots,\fk p_m\}$ for the set of all primes of $F$ which ramify in $E$ and $E(F)=<\epsilon_1,\dots,\epsilon_{r}>$ for the unit group of $F$. Organize the units such that $\epsilon_1,\cdots,\epsilon_\ell$ are in $E(F)\cap N(E^\times)$ and $\epsilon_{\ell+1},\cdots,\epsilon_r$ are non-norm units.

\textbf{Norm units:}
For a norm unit $\epsilon = N(x)$, we have by Hilbert's 90 that $x\ot_E =B^{\sigma-1}$ for some ideal $B$ of $E$. We can find $b\in F^\times$ such that $b B^{\sigma+1} = \ot_E$.  For each $\e_i$ that is a norm unit, set $b_i \in F^\times$ for that element.

\textbf{Primes $\fk p_i$}: For each ramified prime $\fk p_i=(\pi_i)$, set $a_i = \pi^{-1}$. 

The generalized \Redei matrix $R_{E/F}$ is defined as the $m\times r$ matrix over $F_2$ 

\begin{equation} \label{eq: generalized Redei}
    R_{E/F}=\begin{pmatrix}
        (a_1,\delta)_{\fk p_1} & \cdots & (b_1,\delta)_{\fk p_1} & \cdots & (\epsilon_{\ell+1},\delta)_{\fk p_1} & \cdots & (\epsilon_r,\delta)_{\fk p_1} \\
        & \cdots & & \cdots & & \cdots & \\
        (a_1,\delta)_{\fk p_m} & \cdots & (b_1,\delta)_{\fk p_m} & \cdots & (\epsilon_{\ell+1},\delta)_{\fk p_m} & \cdots & (\epsilon_r,\delta)_{\fk p_m}
    \end{pmatrix}
\end{equation}

\begin{theorem}[{\cite[Theorem~2.1]{MR2452635}}] \label{th: generalized Redei}
    Assume $h_F=1$. Then $r_4(\Cl^+(E))=m-1- \rank_{\F_2}(R_{E/F})$.
\end{theorem}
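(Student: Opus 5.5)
This is the generalized Rédei theorem of \cite[Theorem~2.1]{MR2452635}: for $E=F(\sqrt{\delta})$ with $h_F=1$, $r_4(\Cl^+(E))=m-1-\rank_{\F_2}(R_{E/F})$. I would prove it by mimicking the classical genus-theoretic proof of Rédei--Reichardt, with Hilbert symbols over $F$ replacing Legendre symbols.

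\textbf{Step 1 (ambiguous classes).} Apply Chevalley's theorem (Theorem~\ref{th: Chevalley}) to $E/F$ in the narrow sense. Since $h_F=1$, the $2$-rank is
$$r_2(\Cl^+(E))=m-1,$$
where the archimedean contributions cancel against the index of norm units in the narrow setting. The $2$-torsion $\Cl^+(E)[2]$ is the ambiguous group, since $\sigma$ acts as $-1$ on $\Cl^+(E)/\text{(image of }\Cl^+(F))$ and $\Cl^+(F)$ has odd order or is trivial on the $2$-part. This ambiguous group is generated by:
\begin{itemize}
\item the classes of the ramified primes $\mathfrak P_i$ above $\fk p_i$;
\item the classes $[B]$ attached to norm units $\e=N(x)$ via Hilbert 90, where $x\ot_E=B^{\sigma-1}$.
\end{itemize}

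\textbf{Step 2 (Rédei's criterion for squares).} The $4$-rank equals the dimension of $\Cl^+(E)[2]\cap \Cl^+(E)^2$, equivalently
$$r_2-\dim\bigl(\Cl^+[2]/(\Cl^+[2]\cap (\Cl^+)^2)\bigr).$$
By genus theory, a class $c$ lies in $(\Cl^+)^2$ iff it is killed by all genus characters. These characters are indexed by the ramified primes: for an ideal $\mathfrak a$ with norm $\alpha\in F^\times$ (made totally positive, using $h_F=1$), the character is $\chi_j(\mathfrak a)=(\alpha,\delta)_{\fk p_j}$, with one relation coming from the product formula.

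Evaluate this on the generators:
\begin{itemize}
\item for $\mathfrak P_i$, the norm generator is $\pi_i$, which gives the column $(a_i,\delta)_{\fk p_j}$;
\item for $[B]$ attached to $\e$, the norm is $b^{-1}$, which gives the column $(b_i,\delta)_{\fk p_j}$;
\item non-norm units contribute no new generators but enter as relations.
\end{itemize}

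\textbf{Step 3 (counting).} First I must show that the ambiguous group is the quotient of the free $\F_2$-space on these generators, modulo relations coming from principal ideals generated by units. Then the map from the ambiguous group to $\F_2^m$ via the characters has image of dimension $\rank R_{E/F}$ modulo the single global product relation. Combining this with $r_2=m-1$ gives the formula. The non-norm unit columns $(\e_k,\delta)_{\fk p_j}$ should be included because they enlarge the span exactly by the amount by which the generator count overshoots, and this is where the bookkeeping must be done carefully.

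\textbf{Main obstacle.} The hardest part is the precise accounting in Step 3: verifying that adjoining the non-norm unit columns corrects for the relations among ambiguous classes, without losing or double-counting a dimension, and handling the infinite places in the narrow sense. I would first treat the case where every unit is a norm, then add the non-norm units one at a time, checking at each stage that the dimension count balances.
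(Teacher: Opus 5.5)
The paper gives no proof of this statement; it only cites \cite[Theorem~2.1]{MR2452635}. The real question is therefore whether your sketch can be completed. As set up, it cannot. Step~3 is only asserted, and beyond that you mix the narrow class group with ingredients that only work in the ordinary class group.

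\textbf{Where the narrow setting breaks.} In Step~1, the class $[B]$ attached to a norm unit $\epsilon=N(x)$ satisfies $[B]^{\sigma-1}=[x\mathcal O_E]$. This is trivial in $\Cl(E)$, but it is trivial in $\Cl^+(E)$ only if $x\mathcal O_E$ has a totally positive generator, so $[B]$ need not lie in $\Cl^+(E)[2]$. Next, once you normalize $N\mathfrak a$ to a totally positive generator, the characters on $\Cl^+(E)$ are already well defined, so the non-norm columns have no job as ``relations''. Finally, the claims that $\sigma=-1$ on $\Cl^+(E)$, that $N\mathfrak a$ has a totally positive generator, and that $r_2(\Cl^+(E))=m-1$ all need $h_F^+$ odd, not merely $h_F=1$.

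\textbf{A counterexample to the $\Cl^+$ version.} With $R_{E/F}$ defined as in the paper (including the $b$-columns), the identity is false for $\Cl^+(E)$. Take $F=\Q$, $E=\Q(\sqrt{34})$, $\delta=34$, so $m=2$ (the primes $2$ and $17$). Here $-1=N\bigl((3+\sqrt{34})/5\bigr)$, and $(3+\sqrt{34})/5$ generates $\mathfrak f\bar{\mathfrak f}^{-1}$ for a prime $\mathfrak f\mid 5$. Hence $B=\bar{\mathfrak f}$ and $b\equiv 5$ modulo squares. For $v=2,17$ one has $(2,34)_v=(17,34)_v=1$ and $(5,34)_v=-1$, so $\rank R_{E/F}=1$ and the formula gives $0$. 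But $h(E)=2$ and $N(35+6\sqrt{34})=1$, so $\Cl^+(E)\simeq\Z/4\Z$ and $r_4(\Cl^+(E))=1$. Moreover $[\bar{\mathfrak f}]$ generates $\Cl^+(E)$: it is not narrowly ambiguous, because every unit of $E$ has norm $1$. The formula does give $r_4(\Cl(E))=0$. So your Step~3 bookkeeping cannot balance for $\Cl^+(E)$. The ``$\Cl^+$'' in the cited statement should read $\Cl(E)$, and that is also what the paper actually needs later, via $r_4(A(F_1))=0$.

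\textbf{How to repair it.} Your strategy does work in $\Cl(E)$, where $h_F=1$ is exactly the right hypothesis.
\begin{enumerate}
\item Since $\mathfrak a^{1+\sigma}=N(\mathfrak a)\mathcal O_E$ is principal, $\sigma=-1$ on $\Cl(E)$. Hence $\Cl(E)[2]=\Am(E/F)$ and $\Cl(E)^{1-\sigma}=\Cl(E)^2$.
\item Choose $\epsilon_1,\dots,\epsilon_r$ to be an $\F_2$-basis of $E(F)/E(F)^2$ with $\epsilon_1,\dots,\epsilon_\ell$ spanning the norm units modulo squares. No infinite place ramifies, so Chevalley gives $\dim_{\F_2}\Cl(E)[2]=m-1-(r-\ell)$. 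This group is spanned by the classes $[\mathfrak P_i]$ and $[B_k]$.
\item By Hilbert~90 for ideals, $[\mathfrak a]\in\Cl(E)^{1-\sigma}$ if and only if $N\mathfrak a=(N\gamma)$ for some $\gamma\in E^\times$. By the Hasse norm theorem, this holds if and only if the vector $\bigl((a,\delta)_{\mathfrak p_j}\bigr)_j$, for a generator $a$ of $N\mathfrak a$, lies in the span of the non-norm columns. This is the real reason those columns appear: they absorb the choice of $a$.
\item Consequently $(x,y,z)\mapsto\prod[\mathfrak P_i]^{x_i}\prod[B_k]^{y_k}$ maps $\ker R_{E/F}\subseteq\F_2^{m+r}$ onto $\Cl(E)[2]\cap\Cl(E)^2$.
\item The non-norm columns are linearly independent, again by Hasse. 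So each relation $(x,y)\in\F_2^{m+\ell}$ among the generators lifts to exactly one element of $\ker R_{E/F}$, and these lifts form the kernel of that map. Its dimension is $(m+\ell)-(m-1-(r-\ell))=r+1$.
\end{enumerate}
Therefore $r_4(\Cl(E))=(m+r-\rank R_{E/F})-(r+1)=m-1-\rank R_{E/F}$, which is the count your Step~3 was missing.
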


\begin{remark}
    As a convention, we consider the elements of the matrix $R_{E/F}$ as elements of $\F_2$ by identifying $0$ with $1$ and $1$ with $-1$.
\end{remark}

The $2$- rank can also be computed in a similar manner. We form the $m\times r$ matrix 
$M_{E/F}=((\e_j,\delta)_{\fk p_i})$. 
\begin{lemma}[{\cite[Lemma~2.4]{MR2452635}}]
With the above notation: 
$$r_2(\Cl^+(E))=m-1-\rank_{\F_2}(M_{E/F})$$
\end{lemma}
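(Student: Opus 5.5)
The plan is to compute the $2$-rank of $\Cl^+(E)$ through the ambiguous class number formula (Theorem~\ref{th: Chevalley}) in its narrow form. The key observation is that $\sigma$ acts on $\Cl^+(E)$ essentially as inversion. Indeed, $1+\sigma$ acts on classes as $j\circ N_{E/F}$, where $j$ is extension of ideals. Assuming the narrow class group of $F$ is trivial in our setting (this is where $h_F=1$ enters, together with $F$ real, as for $\mathbf Q_1$), we get $\Cl^+(E)^{1+\sigma}=1$. Hence $\Cl^+(E)^{1-\sigma}=\Cl^+(E)^{2}$, and therefore
$$
2^{r_2(\Cl^+(E))}=\bigl|\Cl^+(E)/\Cl^+(E)^2\bigr|=\bigl|\Cl^+(E)/\Cl^+(E)^{1-\sigma}\bigr|=\bigl|\Am^+(E/F)\bigr|,
$$
where the last equality uses the exact sequence $1\to \Am^+\to\Cl^+\xrightarrow{1-\sigma}\Cl^+\to \Cl^+/\Cl^{+(1-\sigma)}\to 1$ of finite groups.

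Next I would apply Chevalley's formula, part (2), in the narrow sense. With $h^+_F=1$, $n=2$, and $E$ totally real, the only ramified places are the $m$ finite primes $\fk p_1,\dots,\fk p_m$. This gives
$$
|\Am^+(E/F)|=\frac{2^{m}}{2\,[E(F):E(F)\cap N(E^\times)]}.
$$
I would double-check this bookkeeping in the narrow setting, where the infinite places and the totally positive units must be handled consistently. It is cleanest to quote Chevalley for the narrow class group, in which the unit index is taken over all units and the norm condition is taken from $E^\times$.

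The remaining step, and the main one, is to show $[E(F):E(F)\cap N(E^\times)]=2^{\rank_{\F_2} M_{E/F}}$. By the Hasse norm theorem (valid since $E/F$ is cyclic), a unit $\e$ is a norm from $E^\times$ iff $(\e,\delta)_v=1$ for every place $v$ of $F$. I would check this place by place:
\begin{itemize}
\item At an infinite place, $\delta$ is positive because $E$ is totally real, so the symbol is trivial.
\item At a finite place unramified in $E/F$, a unit is a local norm, so the symbol is trivial.
\end{itemize}
Hence $\e\in N(E^\times)$ iff $(\e,\delta)_{\fk p_i}=1$ for $i=1,\dots,m$. The map $E(F)/E(F)^2\to\F_2^m$, $\e\mapsto((\e,\delta)_{\fk p_i})_i$, is a homomorphism whose kernel is $(E(F)\cap N(E^\times))/E(F)^2$; note that squares are norms. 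Its image is the column space of $M_{E/F}$, so the index equals $2^{\rank M_{E/F}}$. Combining the three steps gives $r_2(\Cl^+(E))=m-1-\rank_{\F_2}(M_{E/F})$.

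The product formula $\prod_v(\e,\delta)_v=1$ shows that the rows of $M_{E/F}$ sum to zero. Hence $\rank M_{E/F}\le m-1$, a useful consistency check. I expect the only delicate point to be the narrow-versus-ordinary normalization in the first two steps, namely ensuring that triviality of $\Cl^+(F)$ holds or is not actually needed.
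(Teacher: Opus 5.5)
Your third step is correct: the Hasse norm theorem, the triviality of the symbols at infinite and unramified places, and the identification of the kernel as $(E(F)\cap N(E^\times))/E(F)^2$. The second step fails, and not merely as bookkeeping. In the narrow ambiguous class number formula the unit index runs over the group $E(F)^+$ of totally positive units of $F$:
\[
|\Am^+(E/F)| = h^+_F\,\frac{2^{m}}{2\,[E(F)^+ : E(F)^+\cap N(E^\times)]}.
\]
The expression you wrote, with $[E(F):E(F)\cap N(E^\times)]$, is part (2) of Theorem~\ref{th: Chevalley}, which counts the ordinary group $\Am(E/F)$. Your claim that in the narrow version ``the unit index is taken over all units'' is exactly the error.

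It cannot be patched, because the identity with $\Cl^+$ is false. Take $F=\Q$, $E=\Q(\sqrt 3)$. Then $m=2$, and the single column $\bigl((-1,3)_2,(-1,3)_3\bigr)=(-1,-1)$ has rank $1$, so the formula predicts $r_2=0$. Yet $\Cl^+(\Q(\sqrt 3))\simeq\Z/2\Z$, since $N(2+\sqrt 3)=1$. Likewise, over $F=\mathbf Q_1$ every totally positive unit is a square, so the correct count always gives $r_2(\Cl^+(E))=m-1$. By contrast, $m-1-\rank_{\F_2}(M_{E/F})=m-2$ whenever $\e_2$ is not a norm from $E$, which is the situation for $F_1/\mathbf Q_1$ in Theorem~\ref{th: r_4 of F1}.

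What your argument does establish is the same identity for the ordinary class group, $r_2(\Cl(E))=m-1-\rank_{\F_2}(M_{E/F})$. This is also what the paper actually uses: Remark~\ref{rem:F1-upper-bound} deduces $r_2(A(F_1))=3$ via Chevalley's formula. To get it, run your first step with $\Cl(E)$. Since $h_F=1$, the operator $1+\sigma$ factors through $\Cl(F)=1$, so $\Am(E/F)=\Cl(E)[2]$ and $2^{r_2(\Cl(E))}=|\Am(E/F)|$. This also removes your auxiliary assumption $\Cl^+(F)=1$, which does not follow from $h_F=1$ and $F$ real. Theorem~\ref{th: Chevalley}(2), with no ramified infinite places because $E$ is totally real, then gives $|\Am(E/F)|=2^{m-1}/[E(F):E(F)\cap N(E^\times)]$. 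Your third step identifies this index with $2^{\rank_{\F_2}(M_{E/F})}$.
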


\section{Case: \texorpdfstring{$(p,q)\equiv (5,1)\mod 8$}{(p,q)=1 mod 8}}
Let $K=\Q(\sqrt{pq})$. In this section, we assume $p\equiv 5 \mod 8$ and $q\equiv 1 \mod 8$ with $\kron{q}{p}=1$. Similar to the techniques used in \cite{MR4879114}, we can prove the following:

\begin{proposition} \label{th: case (5,1)}
    Let $(p,q) \equiv (5,1)\mod 8$ and $K=\mathbb{Q}(\sqrt{D})$ with $D=pq$. Then $\lambda_2(K)=0$ for the following cases:
    \begin{enumerate}
        \item $\left( \dfrac{p}{q} \right)=-1$.   
\smallskip
\item  $\left( \dfrac{p}{q} \right)=1$, $\left( \dfrac{p}{q} \right)_{4}=-\left( \dfrac{q}{p} \right)_{4}$ and $N(\epsilon_{2D})=-1$.
    \end{enumerate}
\end{proposition}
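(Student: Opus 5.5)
Since $p\equiv 5\mod 8$, the prime $2$ is inert in $\Q(\sqrt p)$; in $K=\Q(\sqrt{pq})$ we have $pq\equiv 5\mod 8$, so $2$ is inert in $K$ and totally ramified in $K_\infty/K$. We are therefore in the setting of Theorem~\ref{th: Greenberg split}, with a unique prime $\mfp=(2)$ above $2$, and it suffices to show that $A_0$ capitulates in $K_\infty$, or that $B_n'=1$ for some $n$. Equivalently, we may work with $F_0=\Q(\sqrt{2pq})$, since $\lambda_2(K)=\lambda_2(F_0)$. The plan is to compute $A_0$ via genus theory and then show that the classes of $A_0$ either die or stabilize in $K_1=\Q(\sqrt2,\sqrt{pq})$, using the unit/norm information encoded in the hypotheses.

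\textbf{Case 1: $\kron pq=-1$.} By Scholz (Theorem~\ref{th: Nm(ed)=-1}(1)), $h\equiv 2\mod 4$ and $N(\epsilon_D)=-1$, so $A_0\simeq\Z/2\Z$, generated by the class of a prime $\mathfrak q$ above $q$ (or above $p$). In fact $\mathfrak p_p\mathfrak p_q=(\sqrt{pq})$, and the nontrivial class is $[\mathfrak p_q]$. In $K_1$ we then check that $\mathfrak p_q$ becomes principal: $q\equiv 1\mod 8$ splits in $\Q(\sqrt2)$, and I would exhibit an element of $\Q(\sqrt2,\sqrt{pq})$ generating $\mathfrak p_q\mathcal O_{K_1}$. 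The natural route is to show that the unit $\epsilon_{2pq}$ (or $\epsilon_2\epsilon_{pq}\epsilon_{2pq}$) has a square root in $K_1$, so that $q(K_1)>1$. Via the relation $\sqrt{\epsilon}=\alpha$ and the standard identity $\epsilon_{2pq}=x+y\sqrt{2pq}$ with $2(x\pm1)$ or $2q(x\pm 1)$ a square, this produces a generator of the ramified prime. Alternatively, I would apply Theorem~\ref{kuroda} to $K_1/\Q$: here $|A(\Q(\sqrt2))|=1$, $|A(K)|=2$, and $|A(F_0)|$ comes from genus theory together with (\ref{eq: rk4 for 1 mod 4})-type R\'edei computations, which give $|A(F_0)|=2$. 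Hence $|A_1|=q(K_1)$, and I would show $q(K_1)\le 2$ and that $i_{0,1}$ is trivial. This implies $B_1'=D_1$. Since $2$ is inert in $K$, $D_1$ is generated by the prime above $2$, which is principal (generated by $\sqrt2$) up to classes coming from $K$, so $B_1'=1$. This reproduces \cite[Cor.~3.3.3]{MR4879114}(1) in the $(5,1)$ setting.

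\textbf{Case 2: $\kron pq=1$, $\kron pq_4=-\kron qp_4$, $N(\epsilon_{2D})=-1$.} Scholz gives $N(\epsilon_D)=1$ and $|A_0|=2\cdot 2=4$ in the ordinary class group, with cyclic $2$-part $\Z/4\Z$ or $(\Z/2\Z)^2$ according to the R\'edei matrix. We work in $F_0=\Q(\sqrt{2pq})$, where $N(\epsilon_{2D})=-1$ forces $\Cl=\Cl^+$, and compute $A(F_0)$ via the generalized R\'edei matrix over $\Q$. The main step is then the unit index $q(K_1)$ in Theorem~\ref{kuroda}. Since $N(\epsilon_D)=1$ while $N(\epsilon_2)=N(\epsilon_{2D})=-1$, the possible fundamental systems are restricted to types (1), (2) and (7). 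I would decide between them by testing whether $\epsilon_D$ is, up to a factor of $p$, $q$ or $2$, a square in $K_1$, using Theorem~\ref{th: lemmermeyer biquadratic class number} with the square-class criterion. This yields $|A_1|$ explicitly. Comparing with $|B_1'|$, obtained from Chevalley's Theorem~\ref{th: Chevalley} for $K_1/K$ (one ramified prime, so $|B_1'|=|A_0|/[E(K):E(K)\cap N]$), should show that $i_{0,1}(A_0)\subseteq D_1$ and that $D_1$ is trivial. Greenberg's criterion then finishes the argument.

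\textbf{Main obstacle.} The delicate step is the square-class determination of $\epsilon_D$ in $K_1$ in case 2, that is, which of $\epsilon_D$, $p\epsilon_D$, $2\epsilon_D$, $2p\epsilon_D$ is a square in $\Q(\sqrt{pq})$. This requires quartic-symbol information linking $\kron pq_4\kron qp_4=-1$ with the Hilbert symbols $(\epsilon_D,2)$. I would first write $\epsilon_D=x+y\sqrt{pq}$ and factor $(x+1)(x-1)=pqy^2$ to get the candidate squarefree kernels. I would then eliminate candidates by evaluating Legendre symbols modulo $p$ and $q$, following the approach of \cite{MR4879114}.
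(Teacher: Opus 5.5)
You have the right framework: $2$ is inert in $K$, so Theorem~\ref{th: Greenberg split} applies, and $D_1=1$ because $2\mathcal O_{K_1}=(\sqrt 2)^2$. Everything therefore reduces to showing that $B_1'=i_{0,1}(A_0)$ is trivial. The gap is that the proposal never isolates the invariant that decides this. In both cases $A_0\simeq\Z/2\Z$, so Chevalley gives $|B_1'|=2/[E(K):N_{K_1/K}(E(K_1))]$, and you need $\pm\epsilon_D\notin N_{K_1/K}(E(K_1))$. This is immediate once $\{\epsilon_D,\epsilon_{2D},\epsilon_2\}$ is a fundamental system of units of $K_1$: their norms to $K$ are $\epsilon_D^2$, $N_{F_0/\Q}(\epsilon_{2D})=\pm1$ and $-1$. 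That is the paper's entire argument.

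The index you write, $[E(K):E(K)\cap N]$, is the wrong one if $N$ means norms from $K_1^\times$. The dyadic prime is the only ramified place of $K_1/K$, so by the product formula every unit of $K$ is a norm from $K_1^\times$. That index is therefore $1$, and you only recover $|B_1|=|A_0|$. Computing $|A_1|$ via Kuroda does not give capitulation either. In Case~2 the R\'edei matrix of $F_0$ has rank $1$, so $r_4(\Cl^+(F_0))=1$ and $|A(F_0)|\ge 8$. Hence $|A_1|\ge 4>|A_0|$, and only the unit-norm index can settle the question.

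There are also concrete errors. In Case~2, $|A_0|\neq 4$. Scholz gives $h^+\equiv 4$ but $2h\equiv 4\bmod 8$, so $h\equiv 2\bmod 4$; the $2$-rank is $1$, so $A_0\simeq\Z/2\Z$. With your $|A_0|=4$ you would get $|B_1'|\ge 2$ (since $-1=N_{K_1/K}(\epsilon_2)$), which contradicts the conclusion you want. Type (7) cannot occur because $N(\epsilon_D)=1\neq N(\epsilon_2)$. So the only question is whether $2\epsilon_D\in K^{\times 2}$, and this needs no quartic symbols: $2\epsilon_D=\beta^2$ forces $N(\beta)=\pm2$, so $\pm 8$ would be a square mod $p$, contradicting $\kron{2}{p}=-1$.

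In Case~1 your ``natural route'' tries to prove $q(K_1)>1$, which is false. Here $q(K_1)=1$; the paper takes this from \cite{MR4879114}. In particular $\sqrt{\epsilon_{2D}}\notin K_1$, because $N(\epsilon_{2D})=-1$. To see this, note that the R\'edei matrix of $F_0$ has rank $2$, so $r_4(\Cl^+(F_0))=0$, and the narrow class of $(\sqrt{2pq})$ lies in the principal genus. For the same reason $|A(F_0)|=4$, not $2$. Kuroda then gives $|A_1|=2q(K_1)$, so your bound $q(K_1)\le 2$ is not enough.

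Once $q(K_1)=1$ is known, your Kuroda route does close. Then $|A_1|=2$, and the norm $A_1\to A_0$ is onto (total ramification at $2$), hence bijective. Since $N_{K_1/K}\circ i_{0,1}$ is squaring, $i_{0,1}$ is trivial. But this rests on the same unit computation as the paper's proof.
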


\begin{proof}
    In both cases, there is only one prime above $2$, and we have $A_{0} \cong \Z/ 2\Z$, due to Theorem \ref{th: Nm(ed)=-1}. Frome Chevalley's formula, we have 
    $$| B_1'| =\frac{2}{[E(K):N (E(K_1))]}.$$

    From \cite{MR4879114}, when $ \left( \dfrac{p}{q} \right) =-1$ the fundamental system of units of $K_1$ is $\{ \epsilon_D, \epsilon_{2D}, \epsilon_2 \}$. For the second case, since $N(\epsilon_{2D})=-1$ and $N(\epsilon_D)=1$ , the only possible unit that is a square root is $\epsilon_D$. From \cite[Lemma~3.3.8]{MR4879114}, $\sqrt{\epsilon_D} \not \in K_1$ and $\{ \epsilon_D, \epsilon_{2D}, \epsilon_2 \}$ is a fundamental system of units of $K_1$. In any case, it follows that $\epsilon_D \not \in N(E(K_1))$ and the index is $2$, therefore $|B_1'|=1$ and we conclude from Theorem \ref{th: Greenberg split} that $\lambda_2(K) = 0$.
    
\end{proof}

This idea is pretty straightforward; we are exploiting the fact that the class group of the base field is $\Z / 2\Z$, and therefore $|B_1'|$ has only two possible values.

\section{Case: \texorpdfstring{$(p,q) \equiv (1,1) \mod 8$}{(p,q)= (1,1) mod 8}}
As always, let $K=\Q(\sqrt{D})$ where $D=pq$. In this section, we assume $p\equiv q \equiv 1 \mod 8$ with $\kron{p}{q}=-1$. Our goal in this chapter is to find a family that satisfies $\lambda_2(K)=0$. 

\begin{lemma} \label{lem: A0=C2}
    Assuming $\kron{p}{q}=-1$, we have $A_0=<\fk p>=<\fk q> \simeq \Z/2\Z$ where $\fk p$ and $\fk q$ are the primes in $K$ above $p$ and $q$ respectively.
\end{lemma}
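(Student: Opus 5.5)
By Theorem~\ref{th: Nm(ed)=-1}(1), the hypothesis $\kron{p}{q}=-1$ gives $h\equiv h^+\equiv 2 \mod 4$ and $N(\epsilon_D)=-1$. Hence $A_0=A(K)$ is cyclic of order exactly $2$, i.e. $A_0\simeq \Z/2\Z$. Genus theory (Theorem~\ref{th: Genus theory}) is consistent with this: $d_K=pq$, so $\omega(d_K)=2$ and the $2$-rank is $1$. The relation $\kron{q^*}{p}=\kron{q}{p}=-1$ in \eqref{eq: rk4 for 1 mod 4} also gives $r_4=0$. It remains to identify a generator, and I will show that $[\fk p]$ is nontrivial.

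\textbf{Step 1: $[\fk p]$ and $[\fk q]$ are $2$-torsion and equal.} Since $p$ and $q$ ramify in $K$, we have $\fk p^2=(p)$ and $\fk q^2=(q)$, so both classes have order dividing $2$. Moreover $\fk p\fk q=(\sqrt{pq})$ is principal, so $[\fk p]=[\fk q]^{-1}=[\fk q]$. Because the $2$-torsion of $\Cl(K)$ sits inside the cyclic $2$-group $A_0$, both classes lie in $A_0$.

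\textbf{Step 2: $\fk p$ is not principal.} Suppose $\fk p=(\alpha)$. Then $N(\alpha)=\pm p$, and since $N(\epsilon_D)=-1$ we may adjust the sign and take $N(\alpha)=p$. Writing $\alpha=(x+y\sqrt{pq})/2$ gives $x^2-pqy^2=4p$, so $p\mid x$. Putting $x=px'$ yields $px'^2-qy^2=4$. Reducing modulo $q$ gives $px'^2\equiv 4$, so $p$ is a square modulo $q$ (note $q\nmid x'$, since otherwise $q\mid 4$). This contradicts $\kron{p}{q}=-1$. An equivalent route is to compute the Hilbert symbol $(p,pq)_q=\kron{p}{q}=-1$, which shows that $p$ is not a norm from $K$ locally at $q$, let alone globally. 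Therefore $[\fk p]\neq 1$ and $A_0=\langle[\fk p]\rangle=\langle[\fk q]\rangle\simeq\Z/2\Z$.

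\textbf{Main obstacle.} The only delicate point is the sign in Step~2. This is handled by $N(\epsilon_D)=-1$ from Scholz's theorem; alternatively, $-p$ can be excluded by the same reduction modulo $q$, since $-1$ is a square modulo $q$ when $q\equiv 1 \mod 4$. With the sign settled, the remaining steps are routine.
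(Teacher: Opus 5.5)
Your proof is correct and follows essentially the same route as the paper: Scholz's theorem gives $|A_0|=2$, and $\mathfrak p$ is shown to be non-principal by reducing the norm equation $x^2-pqy^2=\pm 4p$ modulo $q$ and using $\left(\frac{p}{q}\right)=-1$. The differences are minor: you read $|A_0|=2$ directly off $h\equiv 2 \bmod 4$ instead of combining genus theory with the R\'edei $4$-rank, and you justify explicitly, via $\mathfrak p\mathfrak q=(\sqrt{pq})$, the equality $[\mathfrak p]=[\mathfrak q]$ that the paper only asserts.
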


\begin{proof}
    By Theorem~\ref{th: Nm(ed)=-1} we have that $N(\epsilon_D)=-1$. Therefore, by Theorem~\ref{th: Genus theory} we have that \\ $rk_2(\Cl^+(K))=1$ and $\Cl^+(K)=\Cl(K)$. By \ref{eq: rk4 for 1 mod 4}, we have that $rk_4(\Cl^+(K))=0$. Therefore, $A_0\simeq \Z/2\Z$ and the unique non-trivial class is generated by the prime above $p$ (or equivalently by the prime above $q$). Indeed, if $\fk p = (\pi)$ was principal, and $\pi = \frac{a+b \sqrt{d}}{2}$, then $N(\pi) = \frac{a^2-Db^2}{4}= \pm p$. Therefore, 
    $$a^2-Db^2 = \pm 4p $$
    This implies that $a^2 \equiv \pm 4 p \mod q$. But since $q\equiv 1 \mod 4$ : 

    $$\Big(\frac{\pm 4p}{q}\Big)=\Big(\frac{\pm p}{q}\Big)
=\Big(\frac{\pm 1}{q}\Big)\Big(\frac{p}{q}\Big)
=1\cdot (-1)=-1$$ which is a contradiction.
\end{proof}

Since $2$ splits in $K$, by Chevalley's formula (Theorem~\ref{th: Chevalley}), we have 

\begin{equation}
    |B_n|=\frac{2^{n+1}}{[E(K):E(K)\cap N(K_n^\times)]} \qquad |B_n'|=\frac{2^{n+1}}{[E(K):N(E(K_n))]}
\end{equation}

Numerical observations suggest that $\epsilon_D$ is never a norm from $E(K_n)$, although proving this is challenging. Instead, what we can show is the following:

\begin{lemma} \label{lem: ed is norm iff}
    Let $\epsilon_D$ be the fundamental unit of $K$. Then $$\epsilon_D \notin N(K_1^\times) \iff \kron{2}{p}_4 \kron{2}{q}_4 \kron{pq}{2}_4 =-1.$$
\end{lemma}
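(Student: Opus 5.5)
By the Hasse norm theorem for the cyclic extension $K_1=K(\sqrt 2)/K$, we have $\epsilon_D\in N(K_1^\times)$ if and only if the Hilbert symbols $(\epsilon_D,2)_v$ are trivial at every place $v$ of $K$. The plan is to reduce this to a single local condition at one prime above $2$, and then evaluate that condition by quartic reciprocity.

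\textbf{Step 1: reduction to one place above $2$.} At an infinite place $2>0$, so the symbol is trivial. At a finite place $v\nmid 2$, both $\epsilon_D$ and $2$ are units and $K_1/K$ is unramified at $v$, so again the symbol is trivial. Since $pq\equiv 1\mod 8$, the prime $2$ splits in $K$ into two places $v_1,v_2$ with $K_{v_i}=\Q_2$. The product formula gives $(\epsilon_D,2)_{v_1}(\epsilon_D,2)_{v_2}=1$. Hence $\epsilon_D\notin N(K_1^\times)$ if and only if $(\epsilon_D,2)_{v_1}=-1$. Viewing $\epsilon_D$ inside $\Z_2^\times$ via $v_1$, the $2$-adic formula gives $(\epsilon_D,2)_{v_1}=(-1)^{(\epsilon_D^2-1)/8}$. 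So everything comes down to showing that $\epsilon_D\equiv\pm 3\mod 8$ in $\Z_2$ exactly when $\kron{2}{p}_4\kron{2}{q}_4\kron{pq}{2}_4=-1$. This symbol does not depend on the choice of $v_1$, because $\epsilon_D\epsilon_D'=-1$ by Lemma~\ref{lem: A0=C2}, or by Theorem~\ref{th: Nm(ed)=-1}.

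\textbf{Step 2: a Gaussian-integer factorization.} Write $\epsilon_D=\frac{t+u\sqrt D}{2}$ with $t^2-Du^2=-4$, that is, $t^2+4=pq\,u^2$. I would first fix the parities of $t,u$; the case $t,u$ both even reduces to $x^2+1=pqy^2$. Then I would factor in $\Z[i]$:
$$
(t+2i)(t-2i)=\pi_p\bar\pi_p\pi_q\bar\pi_q\,u^2 .
$$
The two factors on the left are coprime away from $1+i$. Therefore $t+2i=\eta\,(1+i)^k\,\pi_p\pi_q\,\gamma^2$ for a unit $\eta$, suitably normalized primary primes $\pi_p,\pi_q$, and some $\gamma\in\Z[i]$.

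\textbf{Step 3: reading off $t\bmod 16$.} The $2$-adic image of $\epsilon_D$ modulo $8$ is determined by $t$ together with the chosen $2$-adic square root of $D$. I would express this residue through the residue of $\pi_p\pi_q$ modulo a power of $1+i$. Primary normalization and the supplementary laws of quartic reciprocity translate that residue into $\kron{2}{p}_4\kron{2}{q}_4$, using $\kron{2}{p}_4=\kron{\pi_p}{\cdots}$-type identities of Lemmermeyer's type. The factor $\kron{pq}{2}_4$ enters through $pq\bmod 16$, which fixes the $2$-adic value of $\sqrt{D}$ modulo $8$.

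\textbf{Main obstacle.} I expect Step 3 to be the main obstacle: the careful $2$-adic bookkeeping of signs, units and the square factor $\gamma^2$ needed to get the residue exactly modulo $8$. If it becomes unwieldy, the fallback is to work with the quadratic subfield $\Q(\sqrt{pq})$ directly. There I would use that $2\epsilon_D$ or $\epsilon_D\sqrt D$ is, up to a square, a product of ramified primes, and compute $(\cdot,2)_{v_1}$ of those primes via the Hilbert symbol formulas of Section~5.
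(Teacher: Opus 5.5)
Your Step~1 is exactly the paper's reduction: Hasse's norm theorem, triviality of $(\epsilon_D,2)_v$ away from $2$, and the product formula over the two dyadic places. From there the paper never computes $\epsilon_D$ $2$-adically. It forms the cyclic quartic field $F=\Q(\sqrt{\epsilon_D\sqrt D})$ and applies the product formula to the norm residue symbols of $2$ in $F/\Q$ at $p,q,2$. It then uses results of \cite{MR4940532} to identify these symbols with $\kron{2}{p}_4$, $\kron{2}{q}_4$ and $(\epsilon_D,2)_{\fk P_1}\kron{D}{2}_4$.

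Your Gaussian-integer route is genuinely different and can be made to work. As written, however, there is a gap exactly where the content of the lemma lies: Step~3 is only announced, and the one mechanism you name is wrong. The symbol does not depend on the $2$-adic value of $\sqrt D$ at all (see below), so $\kron{pq}{2}_4$ cannot enter through $\sqrt D$.

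\textbf{Closing the gap.} Here is one way to carry out Step~3.
\begin{enumerate}
\item Because $D\equiv 1\bmod 8$, the case $t,u$ odd would give $t^2-Du^2\equiv 0\bmod 8$, which is impossible. So $\epsilon_D=x+y\sqrt D$ with $x^2+1=Dy^2$, $y$ odd and $4\mid x$.
\item Since $\epsilon_D-\epsilon_D^{-1}=2x$, you get $\epsilon_D^2-1=2x\,\epsilon_D$. Hence $(\epsilon_D,2)_{v_1}=(-1)^{x/4}$ for either choice of $\sqrt D\in\Z_2$.
\item The elements $x\pm i$ are coprime, so choosing primary associates gives exactly $1-xi=\pi_p\pi_q\gamma^2$. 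Here $\pi_p\equiv\pi_q\equiv 1\bmod 4$ because $p,q\equiv 1\bmod 8$, and $\gamma^2\equiv 1\bmod 4$; this forces the unit to be $1$.
\item The $\gamma^2$-bookkeeping you were worried about disappears if you use the character $\chi(1+4w)=(-1)^{\mathrm{Re}\,w+\mathrm{Im}\,w}$ on $\{z\equiv 1\bmod 4\}$. It is multiplicative, and it kills $\gamma^2$, since $\gamma^2\equiv 1$ or $5+4i\bmod 8$. Thus $(-1)^{x/4}=\chi(1-xi)=\chi(\pi_p)\chi(\pi_q)$.
\item Write $\pi_p=a+bi$ with $a\equiv 1\bmod 4$ and $4\mid b$. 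Gauss's criterion gives $\kron{2}{p}_4=(-1)^{b/4}$, and $p\equiv a^2\bmod 16$ gives $\kron{p}{2}_4=(-1)^{(a-1)/4}$. Hence $\chi(\pi_p)=\kron{2}{p}_4\kron{p}{2}_4$, and similarly for $q$. So $\kron{pq}{2}_4$ actually comes from the real parts of $\pi_p,\pi_q$ modulo $8$.
\end{enumerate}
As a check, $D=17\cdot 41$ has $\epsilon_D=132+5\sqrt{697}$, so $x/4=33$ is odd, and all three quartic symbols equal $-1$.

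\textbf{Comparison.} Completed this way, your proof is elementary and self-contained, and it yields the explicit test $\epsilon_D\notin N(K_1^\times)\iff x\equiv 4\bmod 8$. The paper's argument is shorter and more conceptual, but it relies on the cited local Artin-symbol computations for $F$.
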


\begin{proof}
    The proof is reduced to local symbols computations. We have that 
    $$\epsilon_D \in N(K_1^\times) \iff (\epsilon_D,2)_\fk P =1 \quad \text{ for all } \fk P | 2.$$  Now, since $2$ splits in $K$, say $2\ot_K = \fk P_1 \fk P_2$, by the product formula, we have that 
    $$\epsilon_D \in N(K_1^\times) \iff (\epsilon_D,2)_{\fk P_1} = 1. $$

    We use the technique developed in \cite{MR4940532}: We define the field $F=\Q(\sqrt{\epsilon_D \sqrt{D}})$. This is a cyclic extension of $\Q$ of degree $4$, unramified outside $pq$, and $K$ is its quadratic subfield. We let $\gamma$ denote the generator of $\Gal(F/K)$. 

    By the product formula of Artin symbols, we have
    $$ \left(\frac{2,F/\Q}{p}\right)\left(\frac{2,F/\Q}{q}\right)\left(\frac{2,F/\Q}{2}\right)=1
\quad\text{in }\mathrm{Gal}(F/ \Q). $$ 
Moreover, since $2$ splits in $K$, the restriction of $\left(\frac{2,F/\mathbb Q}{v}\right)$ to $K$ is trivial for $v=p,q,2$. Hence these local symbols actually lie in $\Gal(F/K)=\{1,\gamma\}$. 

By \cite[Lemma~4]{MR4940532} : if $F$ is cyclic quartic and $p$ ramifies totally in $F$, then for $m$ prime to $p$, 
$$\left(\frac{m,F/\Q}{p}\right)=1
\iff \left(\frac mp\right)_4=1$$

In our case, we conclude that 

$\begin{array}{c} \left(\frac{2,F/\Q}{p}\right)=
\begin{cases}
1,&\left(\frac2p\right)_4=+1,\\
\gamma,&\left(\frac2p\right)_4=-1,
\end{cases}
\qquad
\left(\frac{2,F/\Q}{q}\right)=
\begin{cases}
1,&\left(\frac2q\right)_4=+1,\\
\gamma,&\left(\frac2q\right)_4=-1.
\end{cases} \end{array}$

On the other hand, since $K_{\fk P_1} \simeq \mathbb Q_2$, the proof of \cite[Proposition~8]{MR4940532} shows that 
 $$\left( \frac{2, F/\Q}{2} \right) =\left( \frac{2, \epsilon_D \sqrt{D}}{2} \right) = \left( \frac{2, \epsilon_D}{2} \right) \left( \frac{2, \sqrt{D}}{2} \right)= \begin{cases}
1, & \text{if } (\epsilon_D,2)_{\fk P_1}\kron{D}{2}_4=1 \\
\gamma, & \text{if } (\epsilon_D,2)_{\fk P_1}\kron{D}{2}_4=-1
\end{cases}$$
Putting everything together, Artin's product formula gives us that 
$$\kron{2}{p}_4 \kron{2}{q}_4 \kron{D}{2}_4 (\epsilon_D,2)_{\fk P_1} =1$$
which concludes the proof.

\end{proof}

On the other hand, we notice that $\epsilon_D^2 \in N(E(K_1))$. Therefore, if $\epsilon_D \not \in N(E(K_1))$, then $B_1 \simeq \Z/2\Z$ and $B_1'=B_1$.  Notice also that the maps $N:B_{n+1}\to B_n$ are surjective for all $n$.

\begin{lemma} \label{lem: property P_n}
Let $K$ be as above, and let $\epsilon_D$ be the fundamental unit of $K$.
Then
$$
(P_n)\ \forall n\ge 1
\iff
v_2(\log_2(\epsilon_D))=2
\iff
\epsilon_D\notin N_{K_1/K}(K_1^\times).
$$
\end{lemma}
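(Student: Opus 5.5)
The plan is to prove the two equivalences separately, working locally at one of the two primes above $2$. Since $2$ splits in $K$, write $2\ot_K=\fk P_1\fk P_2$, so that $K_{\fk P_i}\simeq\Q_2$. The map $\rho_n$ then becomes
$$E(K)/E(K)^{2^n}\to U_{\fk P_1}/U_{\fk P_1}^{2^n}\times U_{\fk P_2}/U_{\fk P_2}^{2^n},$$
with $U_{\fk P_i}\simeq\Z_2^\times=\{\pm1\}\times(1+4\Z_2)$. Modulo torsion, $E(K)=\{\pm1\}\times\langle\epsilon_D\rangle$. Moreover $N(\epsilon_D)=-1$ by Theorem~\ref{th: Nm(ed)=-1}. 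Hence the two embeddings satisfy $\epsilon_D^{(1)}\epsilon_D^{(2)}=-1$, and it suffices to look at the first embedding.

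\textbf{First equivalence.} Viewing $\epsilon:=\epsilon_D\in\Z_2^\times$ via $\fk P_1$, write $\epsilon=\pm u$ with $u\in 1+4\Z_2$. Then $\log_2(\epsilon)=\log_2(u)$, and $v_2(\log_2 u)=v_2(u-1)\ge 2$.

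I would argue that $(P_n)$ for all $n$ is equivalent to asking that the class of $\epsilon_D$ not become a $2$-th power locally beyond what is forced. Concretely, a kernel element is $\pm\epsilon_D^{k}$ with $2^n\nmid k$ (or $-1$ itself) that is a $2^n$-th power at both places.

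\begin{itemize}
\item The element $-1$ is never a square in $\Q_2$, so it is never in the kernel.
\item For $\pm\epsilon^k$ to be a $2^n$-th power in $\Z_2^\times$, the sign must be $+$ with $\epsilon^k\in 1+4\Z_2$, and we need $v_2(k\log\epsilon)\ge n+2$, i.e. $v_2(k)+v_2(\log\epsilon)\ge n+2$. The same condition holds at the second place, since the valuations of the logs agree there (the second embedding is $-1/\epsilon$).
\end{itemize}

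Injectivity for every $n$ therefore means $v_2(k)\ge n$ whenever $v_2(k)+v_2(\log\epsilon)\ge n+2$. This holds exactly when $v_2(\log\epsilon)\le 2$, hence $=2$. I would need to handle the sign carefully: when $\epsilon\equiv 3\bmod 4$, only even $k$ enter, but $\epsilon^2\in 1+8\Z_2$ with $v_2(\log\epsilon^2)=v_2(\log \epsilon)+1\ge 3$. This bookkeeping is routine but needs care.

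\textbf{Second equivalence.} I would use the Hilbert symbol at $\fk P_1$, exactly as in the proof of Lemma~\ref{lem: ed is norm iff}: $\epsilon_D\in N(K_1^\times)\iff(\epsilon_D,2)_2=1$ in $\Q_2$. By the $2$-adic Hilbert symbol formula with $a=0$ and $b=1$, we get $(\epsilon,2)_2=(-1)^{(\epsilon^2-1)/8}$. This is $1$ iff $\epsilon\equiv\pm1\bmod 8$. Equivalently, with $u=\pm\epsilon\in1+4\Z_2$, it is $1$ iff $u\equiv1\bmod 8$, i.e. iff $v_2(u-1)\ge3$, i.e. iff $v_2(\log_2\epsilon_D)\ge3$. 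So $\epsilon_D\notin N(K_1^\times)\iff v_2(\log_2\epsilon_D)=2$.

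\textbf{Main obstacle.} I expect the main obstacle to be the precise translation of $(P_n)$ into the valuation condition. This requires checking that the diagonal image in the product of two copies of $\Z_2^\times/(\Z_2^\times)^{2^n}$ behaves as computed, accounting for the torsion $\pm1$ and for the case $\epsilon\equiv 3\bmod 4$. I would first normalize $\epsilon_D$, replacing it by $-\epsilon_D$ if needed, so that $\epsilon_D\equiv1\bmod4$ at $\fk P_1$, and then reduce everything to $\log_2$ on $1+4\Z_2\simeq\Z_2$.
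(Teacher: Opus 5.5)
Your proposal is correct and follows essentially the paper's route. You write $\Z_2^\times=\{\pm1\}\times(1+4\Z_2)$ and use that $u$ is a $2^n$-th power iff its sign part is trivial and $v_2(\log_2 u)\ge n+2$; when $v_2(\log_2\epsilon_D)\ge 3$ the element $\epsilon_D^2$ lies in the kernel, which is the witness the paper uses. For the second equivalence you evaluate $(\epsilon_D,2)_2=(-1)^{(\epsilon_D^2-1)/8}$, and the only cosmetic difference is that you invoke $N(\epsilon_D)=-1$ from Scholz, whereas the paper needs only $N(\epsilon_D)=\pm1$ to pass between the two dyadic places.
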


\begin{proof}
Write
$$
2\mathcal O_K=\fk p_1\fk p_2.
$$
Since $2$ splits in $K$, we have
$$
K_{\fk p_1}\simeq K_{\fk p_2}\simeq \Q_2
$$ 
where $\Q_2$ represents the $2$-adic numbers.\\

Let $u_i\in \Q_2^\times$ be the image of $\epsilon_D$ in $K_{\fk p_i}$, and let
$\sigma$ be the non-trivial element of $\Gal(K/\mathbb Q)$. Since
$$
\sigma(\epsilon_D)=N_{K/\mathbb Q}(\epsilon_D)\,\epsilon_D^{-1},
$$
we get
$$
u_2=N_{K/\mathbb Q}(\epsilon_D)\,u_1^{-1}.
$$
As $\log_2(\pm 1)=0$, it follows that
$$
\log_2(u_2)=-\log_2(u_1),
$$
hence
$$
v_2(\log_2(u_1))=v_2(\log_2(u_2)).
$$
Thus the quantity $v_2(\log_2(\epsilon_D))$ is independent of the choice of dyadic place.

Set
$$
t:=v_2(\log_2(u_1))\in \Z_{\ge 2}\cup\{\infty\},
$$
where, as usual, $v_2(0)=\infty$.

For $u\in \Z_2^\times$, write
$$
u=\omega(u)\langle u\rangle,
\qquad
\omega(u)\in\{\pm1\},\quad \langle u\rangle\in 1+4\Z_2.
$$
Since
$$
\log_2:1+4\Z_2\longrightarrow 4\Z_2
$$
is an isomorphism of topological groups and $\log_2(x^{2^n})=2^n\log_2(x)$, we have
\begin{equation}\label{eq:2n-power-local}
u\in (\Z_2^\times)^{2^n}
\iff
\omega(u)=1\ \text{and}\ v_2(\log_2 u)\ge n+2.
\end{equation}

We first prove that
$$
(P_n)\ \forall n\ge 1
\iff
v_2(\log_2(\epsilon_D))=2.
$$

Assume first that $t=2$. Let
$$
x=(-1)^a\epsilon_D^m,
\qquad a\in\{0,1\},\quad 0\le m<2^n,
$$
represent a class in $\ker(\rho_n)$. Looking at the $\fk p_1$-component, the image of $x$ is
$$
(-1)^a u_1^m\in (\Z_2^\times)^{2^n}.
$$
By \eqref{eq:2n-power-local},
$$
v_2\!\bigl(\log_2((-1)^a u_1^m)\bigr)\ge n+2.
$$
Since $\log_2(-1)=0$, this becomes
$$
v_2(m\log_2(u_1))=v_2(m)+t\ge n+2.
$$
As $t=2$, we obtain
$$
v_2(m)\ge n,
$$
so $2^n\mid m$. Because $0\le m<2^n$, this forces $m=0$. Hence, $x=(-1)^a$.
But \eqref{eq:2n-power-local} also shows that $-1\notin (\Z_2^\times)^{2^n}$, so necessarily $a=0$.
Therefore $\ker(\rho_n)=0$; i.e. $(P_n)$ holds for every $n\ge 1$.

Conversely, suppose that $t>2$. If $t<\infty$, set
$$
n=t-1\ge 2;
$$
if $t=\infty$, choose any $n\ge 2$. Then the class of $\epsilon_D^2$ in
$$
E(K)/E(K)^{2^n}
$$
is non-trivial, since $\epsilon_D$ has infinite order and $2<2^n$. On the other hand, for each $i=1,2$ we have
$$
\omega(u_i^2)=1
$$
and
$$
v_2(\log_2(u_i^2))
=
v_2(2\log_2(u_i))
\ge n+2.
$$
Thus \eqref{eq:2n-power-local} gives
$$
u_i^2\in (\Z_2^\times)^{2^n}
\qquad (i=1,2).
$$
Hence, $\epsilon_D^2\in \ker(\rho_n)$, giving us a contradiction. Therefore, $t\le 2$.
Since always $t\ge 2$, we conclude that $t=2$.

This proves
$$
(P_n)\ \forall n\ge 1
\iff
v_2(\log_2(\epsilon_D))=2.
$$

We now prove that
$$
v_2(\log_2(\epsilon_D))=2
\iff
\epsilon_D\notin N_{K_1/K}(K_1^\times).
$$

Recall that
$$
\epsilon_D\in N_{K_1/K}(K_1^\times)
\iff
\epsilon_D\in N_{(K_1)_w/K_v}\bigl((K_1)_w^\times\bigr)
\quad\text{for every place }v\text{ of }K.
$$

At every finite place $v\nmid 2$, the extension $K_1/K$ is unramified or split, so every local unit is a norm; since $\epsilon_D$ is a unit, the local norm condition is automatic there. At the real places it is also automatic, because $2>0$. Hence only the dyadic places matter. Under the identifications
$$
K_{\fk p_i}\simeq \Q_2,
\qquad
(K_1)_{\fk P_i}\simeq \Q_2(\sqrt2),
$$
this becomes
$$
\epsilon_D\in N_{K_1/K}(K_1^\times)
\iff
u_i\in N_{\Q_2(\sqrt2)/\Q_2}\bigl(\Q_2(\sqrt2)^\times\bigr)
\qquad (i=1,2).
$$

By the local norm criterion for quadratic extensions,
$$
u_i\in N_{\Q_2(\sqrt2)/\Q_2}\bigl(\Q_2(\sqrt2)^\times\bigr)
\iff
(u_i,2)_2=1.
$$
Moreover,
$$
(u_2,2)_2
=
\bigl(N_{K/\Q}(\epsilon_D)u_1^{-1},2\bigr)_2
=
(u_1,2)_2,
$$
because $N_{K/\Q}(\epsilon_D)=\pm1$, $(\pm1,2)_2=1$, and $(u^{-1},2)_2=(u,2)_2$.
So it is enough to look at $u_1$.

For odd $u\in \Q_2^\times$, the explicit formula for the Hilbert symbol gives
$$
(u,2)_2=(-1)^{(u^2-1)/8}.
$$
Therefore,
$$
(u,2)_2=-1
\iff
u\equiv 3,5 \mod 8.
$$

On the other hand, if $u=\omega(u)\langle u\rangle$ with $\langle u\rangle\in 1+4\Z_2$, then
$$
\log_2(u)=\log_2(\langle u\rangle),
$$
and the series for $\log_2(1+x)$ with $x\in 4\Z_2$ shows that
$$
v_2(\log_2 u)=v_2(\langle u\rangle-1).
$$
Hence
$$
v_2(\log_2 u)=2
\iff
\langle u\rangle\equiv 5 \mod 8
\iff
u\equiv 3,5 \mod 8.
$$

Combining the last two equivalences, we obtain
$$
v_2(\log_2 u_1)=2
\iff
(u_1,2)_2=-1
\iff
u_1\notin N_{\Q_2(\sqrt2)/\Q_2}\bigl(\Q_2(\sqrt2)^\times\bigr).
$$
Since this is equivalent to
$$
\epsilon_D\notin N_{K_1/K}(K_1^\times),
$$
the proof is complete.
\end{proof}
\subsection{The subfield \texorpdfstring{$K_1$}{K1}}

The end goal of this section is to show that $A_1 \cong \Z / 2 \Z \times \Z / 2 \Z$. We begin with the following result about $F_0 = \Q(\sqrt{2D})$:

\begin{proposition}
    $A(F_0) \cong \Z / 2 \Z \times \Z / 2 \Z $ and $N(\epsilon_{2D})=1$. 
\end{proposition}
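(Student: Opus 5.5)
The plan is to work entirely with the narrow class group $\Cl^+(F_0)$ of $F_0=\Q(\sqrt{2pq})$, whose discriminant is $d_{F_0}=8pq$. This gives $r=3$ ramified primes $\ell_1=2$, $\ell_2=p$, $\ell_3=q$ with $2^*=8$. By Theorem~\ref{th: Genus theory}, $r_2(\Cl^+(F_0))=2$.

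Next I compute the R\'edei matrix. Since $p\equiv q\equiv 1\mod 8$, we have $\kron{8}{p}=\kron{8}{q}=1$ and $\kron{p}{2}=\kron{q}{2}=1$ (Kronecker symbols). Together with $\kron{p}{q}=\kron{q}{p}=-1$, this gives
$$
R=\begin{pmatrix} 0&0&0\\ 0&1&1\\ 0&1&1\end{pmatrix},
$$
which has rank $1$. Hence $r_4(\Cl^+(F_0))=1$, and the $2$-part of $\Cl^+(F_0)$ is $\Z/2\Z\times\Z/2^k\Z$ with $k\ge 2$. The kernel vector of $R$ corresponds to the decomposition of the second kind $d_{F_0}=8\cdot pq$, so the $8$-rank is governed by the R\'edei--Reichardt/R\'edei-reciprocity symbol attached to this splitting. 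That symbol is essentially the product $\kron{2}{p}_4\kron{2}{q}_4\kron{pq}{2}_4$, which is exactly the quantity in Lemma~\ref{lem: ed is norm iff}. Under the standing hypothesis that this product equals $-1$, I will show the $8$-rank vanishes. This gives $k=2$, so the $2$-part of $\Cl^+(F_0)$ is $\Z/2\Z\times\Z/4\Z$, of order $8$.

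It remains to show $N(\epsilon_{2D})=1$. Once this is known, Theorem~\ref{th: Genus theory} gives $[\Cl^+(F_0):\Cl(F_0)]=2$, so $|A(F_0)|=4$. Moreover, $A(F_0)$ is the quotient of $\Z/2\Z\times\Z/4\Z$ by the order-$2$ subgroup generated by the narrow class $c$ of $(\sqrt{2pq})$, which is trivial in $\Cl$ but nontrivial in $\Cl^+$. To get $A(F_0)\simeq\Z/2\Z\times\Z/2\Z$ rather than $\Z/4\Z$, I must check that $c$ is a square in $\Cl^+(F_0)$, i.e.\ that $c\in 2\Cl^+$. By genus theory, $c\in 2\Cl^+$ is equivalent to $c$ lying in the principal genus, i.e.\ all genus characters $\chi_8,\chi_p,\chi_q$ are trivial on $c$. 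I will verify this by evaluating them on the norm $2pq$ (with sign $-1$ accounted for at the infinite place), for instance $\kron{-2pq/p}{\cdot}$-type values. These all equal $1$ because $p\equiv q\equiv 1\mod 8$.

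The step I expect to be the main obstacle is proving $N(\epsilon_{2D})=1$, since $-1$ is a local norm everywhere from $F_0$ and there is no local obstruction. My first attempt is the following contradiction argument. If $N(\epsilon_{2D})=-1$, then $\Cl=\Cl^+$, so $A(F_0)\simeq\Z/2\Z\times\Z/4\Z$, and the principal class $c$ would be trivial in $\Cl^+$. I would combine this with the Kubota--Kuroda formula (Theorem~\ref{kuroda}) for the biquadratic field $K_1=\Q(\sqrt2,\sqrt{pq})$, using $|A_0|=2$ from Lemma~\ref{lem: A0=C2} and $h(\Q(\sqrt2))=1$. I would also use the information from Lemmas~\ref{lem: ed is norm iff} and~\ref{lem: property P_n} that $\epsilon_D\notin N(K_1^\times)$. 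The aim is to constrain $q(K_1)$ and $|A_1|$ in a way incompatible with $|A(F_0)|=8$. If that fails, the fallback is a direct Scholz-type argument. Writing $x^2-2pqy^2=-1$ and factoring $(x+\sqrt{2pq}\,y)$ as in the proof of Lemma~\ref{lem: A0=C2}, I would derive a representation $2a^2-pqb^2=\pm1$ or $pa^2-2qb^2=\pm1$ (and similar). Each is then excluded by the quartic symbols $\kron{2}{p}_4,\kron{2}{q}_4$ together with $\kron{p}{q}=-1$.
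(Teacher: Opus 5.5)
Your computation of $\Cl^+(F_0)$ agrees with the paper. The R\'edei matrix for $d_{F_0}=8pq$ has rank $1$, so $r_4=1$. The vanishing of the $8$-rank under the product condition is what the paper imports from Kaplan (Prop.~B2); you also only assert it, so that step sits at citation level. Your last step is correct: the class of $(\sqrt{2pq})$ lies in the principal genus because $-1$ is a local norm everywhere. It makes explicit what the paper compresses into ``its $2$-rank is $2$''.

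The genuine gap is $N(\epsilon_{2D})=1$. You rightly flag it as the crux, but neither route reaches it.

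\textbf{First route.} It cannot discriminate between the two possibilities. If $N(\epsilon_{2D})=-1$, all three quadratic units of $K_1$ have norm $-1$, so $q(K_1)\in\{1,2\}$. The scenario $q(K_1)=1$, $|A(F_0)|=8$ gives $|A_1|=\tfrac14\cdot 1\cdot 2\cdot 8=4$. That is the same value as the true scenario $q(K_1)=2$, $|A(F_0)|=4$. The fact $\epsilon_D\notin N(K_1^\times)$ only gives $|A_1^{\Gal(K_1/K)}|=2$, which is consistent with both. So no contradiction can come out of Kubota--Kuroda and the data you list; you would need independent unit information, a problem as hard as the original. Note also that the paper's own value $q(K_1)=2$ uses $\sqrt{\epsilon_{2D}}\in K_1$, which presupposes the statement.

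\textbf{Fallback.} It runs backwards. The factorization trick applies to a norm-$+1$ unit $r+s\sqrt{2D}$. A solution of $x^2-2pqy^2=-1$ corresponds to the trivial factorization and yields no representation $2a^2-pqb^2=\pm1$. In fact the two are incompatible. Such a representation gives $\beta=a\sqrt2+b\sqrt{pq}\notin F_0$ with $\beta^2$ a totally positive unit of $F_0$. That is impossible if $N(\epsilon_{2D})=-1$, since then the totally positive units of $F_0$ are the even powers $\epsilon_{2D}^{2k}$.

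Worse, in the present situation $2a^2-pqb^2=\pm1$ is actually solvable. The proof of Lemma~\ref{lem: A1=C2xC2} gives $r\pm1=s_1^2$ and $r\mp1=2Ds_2^2$, hence $2(s_1/2)^2-Ds_2^2=\pm1$. So this equation cannot be excluded. No congruence argument could exclude such equations anyway: $8\cdot pq$ is a decomposition of the second kind, so they have no local obstruction at $2$, $p$, $q$ or $\infty$.

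\textbf{What is missing.} You need a global quartic criterion of Scholz type, which the paper takes from Kaplan (Corollary~1). For the decomposition of the second kind $2D=2\cdot D$ with $\kron{2}{D}_4\neq\kron{D}{2}_4$, the equation $x^2-2Dy^2=-1$ is unsolvable. Here that inequality is exactly the product hypothesis. Either cite this result or prove it via R\'edei theory: the same quartic symbol that kills the $8$-rank is what forces $N(\epsilon_{2D})=1$.
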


\begin{proof}
    From \cite[Proposition~B2]{MR309898}, since $l(D) = \kron{2}{p}_4 \kron{2}{q}_4 \kron{D}{2}_4= -1$, the $8$-rank of the narrow class group of $F_0$ is $0$.\\ 
    
    Since the the $4$-rank is equal to $1$ we must have $Cl_{2}(F_0)^{+} \cong \Z / 2 \Z \times \Z / 4 \Z$. Furthermore, from \cite[Corollary~1]{MR309898}, since the decomposition $2D= 2 \cdot D$ satisfies 
    $$\kron{2}{D}_4 \neq \kron{D}{2}_4,$$
    where we think of the symbol as multiplicative using Kaplan's notation, the equation $x^2-2Dy^2=-1$ has no solution and $N(\epsilon_{2D})=1$. Therefore, the narrow class group differs from the class group and $A(F_0) \cong \Z / 2 \Z \times \Z / 2 \Z $, since its $2$-rank is $2$ due to Theorem \ref{th: Genus theory}.
\end{proof}

From \cite[Theorem~3.2]{MR2783388}, since $$ \kron{2}{p}_4 \kron{2}{q}_4 \kron{D}{2}_4= -1,$$ one of the products $$ \kron{2}{p}_4  \kron{p}{2}_4,\kron{2}{q}_4 \kron{q}{2}_4$$ 
is equal to $-1$ and we must have $2$-rank$(A_n)=2 \; \forall n \geq 1$.  We finish with the following result:

\begin{lemma} \label{lem: A1=C2xC2}
    $A_1 \cong \Z / 2 \Z \times \Z / 2 \Z$, $\sqrt{\epsilon_{2D}} \in K_1$ and $\{ \epsilon_D, \sqrt{\epsilon_{2D}}, \epsilon_2 \}$ is a F.S.U. of $K_1$. 
\end{lemma}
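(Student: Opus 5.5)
We apply the Kubota--Kuroda formula (Theorem~\ref{kuroda}) to the real biquadratic field $K_1=\Q(\sqrt 2,\sqrt D)$, whose quadratic subfields are $\Q(\sqrt2)$, $K=\Q(\sqrt D)$ and $F_0=\Q(\sqrt{2D})$. We know $|A(\Q(\sqrt2))|=1$, $|A(K)|=2$ by Lemma~\ref{lem: A0=C2}, and $|A(F_0)|=4$ by the preceding proposition. Hence
$$|A_1|=\tfrac14\, q(K_1)\cdot 1\cdot 2\cdot 4=2\,q(K_1).$$
The $2$-rank of $A_1$ is $2$, by the remark quoted from \cite[Theorem~3.2]{MR2783388}, so $|A_1|\ge 4$ and therefore $q(K_1)\ge 2$. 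The proof then splits into two steps: show $q(K_1)\le 2$, and identify the unit that produces the index $2$. Once $q(K_1)=2$ is known, $|A_1|=4$ together with $2$-rank $2$ forces $A_1\cong \Z/2\Z\times\Z/2\Z$.

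For the upper bound we use the list of possible F.S.U.'s after Theorem~\ref{kuroda}. A unit can appear under a square root only if it has norm $1$, except in the last case, where all three units must have the same norm. Now $N(\epsilon_2)=-1$, $N(\epsilon_D)=-1$ by Theorem~\ref{th: Nm(ed)=-1}, and $N(\epsilon_{2D})=1$. So $\sqrt{\epsilon_2}$, $\sqrt{\epsilon_D}$, $\sqrt{\epsilon_2\epsilon_{2D}}$ and $\sqrt{\epsilon_D\epsilon_{2D}}$ are excluded, and so is $\sqrt{\epsilon_2\epsilon_D\epsilon_{2D}}$, since the norms are not all equal. The only remaining candidates are $\sqrt{\epsilon_2\epsilon_D}$ and $\sqrt{\epsilon_{2D}}$. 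Both products have norm $1$, but the list only allows a product of two units under a root when those units individually have norm $1$. A more robust argument: $\sqrt{\epsilon_2\epsilon_D}$ is totally positive only if $\epsilon_2\epsilon_D$ is totally positive, and this fails because $\epsilon_2$ and $\epsilon_D$ each have a negative conjugate, under independent embeddings. So at most $\sqrt{\epsilon_{2D}}$ can lie in $K_1$, which gives $q(K_1)\le 2$.

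Combining the two bounds, $q(K_1)=2$, the extra unit must be $\sqrt{\epsilon_{2D}}$, and $\{\epsilon_D,\sqrt{\epsilon_{2D}},\epsilon_2\}$ is a F.S.U. As a consistency check, and as a direct proof of $\sqrt{\epsilon_{2D}}\in K_1$ if one wants it: write $\epsilon_{2D}=x+y\sqrt{2D}$ with $N=1$, so that $\epsilon_{2D}=\tfrac{(x+1)}{2}\,\bigl(1+\tfrac{y\sqrt{2D}}{x+1}\bigr)^{\!\cdot}$-type identities give $\epsilon_{2D}\in m\,F_0^{2}$ with $m\mid 2D$ squarefree ($m=2(x\pm1)$ up to squares). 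By Theorem~\ref{th: lemmermeyer biquadratic class number}, $\sqrt{\epsilon_{2D}}\in K_1$ if and only if $m\in\{2,D,2D\}\cdot\Q^{\times2}$.

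The main obstacle is the step excluding $\sqrt{\epsilon_2\epsilon_D}$ and $\sqrt{\epsilon_2\epsilon_D\epsilon_{2D}}$. I would handle it with the signature argument, checking that every product involving an odd power of $\epsilon_2$ or of $\epsilon_D$ fails to be totally positive. The class-number equality then closes the argument without computing $m$ explicitly.
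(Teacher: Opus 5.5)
Your proof is correct, but it establishes $\sqrt{\epsilon_{2D}}\in K_1$ by a genuinely different route from the paper.

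\textbf{The paper's route.} The paper proves the unit statement arithmetically. It writes $\epsilon_{2D}=r+s\sqrt{2D}$ and factors $r\pm1=d_1s_1^2$, $r\mp1=2d_2s_2^2$ with $d_1d_2=D$. It rules out $d_1=D$, which would make $\epsilon_{2D}$ a square in $F_0$. It rules out $d_1\in\{p,q\}$ using $\kron{p}{q}=-1$ against $\kron{\pm2}{d_2}=1$. This leaves $2\epsilon_{2D}=(s_1+s_2\sqrt{2D})^2$. Only then does it feed $q(K_1)=2$ into Kubota--Kuroda to get $|A_1|=4$, using $r_2(A_1)=2$ solely to fix the group structure.

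\textbf{Your route.} You run the logic backwards. The input $r_2(A_1)=2$, together with $|A_1|=2q(K_1)$, forces $q(K_1)\ge 2$. A sign analysis then shows $\epsilon_{2D}$ is the only possible new square, so the unit statement follows. Your signature argument is in fact a cleaner proof of $q(K_1)\le 2$ than reading it off Kuroda's list. A square in a totally real field is totally positive. Checking the sixteen classes $\pm\epsilon_2^a\epsilon_D^b\epsilon_{2D}^c$ at the four real embeddings leaves only $1$ and $\epsilon_{2D}$. Your phrase ``$\sqrt{\epsilon_2\epsilon_D}$ is totally positive only if\dots'' should be restated in this form.

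\textbf{What each approach buys.} Your route avoids the Pell-equation case analysis and the appeal to Azizi entirely. The price is that the unit conclusion now depends on the class-group inputs. You need $|A(F_0)|=4$ exactly: with $|A(F_0)|=8$ the bound $|A_1|\ge 4$ would only give $q(K_1)\ge1$. You also need $r_2(A_1)=2$ from the cited tower result. The paper's computation, by contrast, gives $\sqrt{\epsilon_{2D}}\in K_1$ from $p\equiv q\equiv 1\bmod 8$, $\kron{p}{q}=-1$ and $N(\epsilon_{2D})=1$ alone.

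\textbf{The ``consistency check'' paragraph.} It is garbled and should be repaired or dropped; your argument does not need it. The correct identity is $2(x+1)\epsilon_{2D}=(x+1+y\sqrt{2D})^2$. The criterion from Theorem~\ref{th: lemmermeyer biquadratic class number} is $m\in\{1,2,D,2D\}\,\Q^{\times 2}$. Of these, $1$ and $2D$ are impossible because $\epsilon_{2D}$ is not a square in $F_0$.
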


\begin{proof}
    Let  $\epsilon_{2D} = r + s \sqrt{2D}$, where $r$ is odd and $s$ is even. Since $r^2-2Ds^2=1$, we have 
\begin{equation*}
    \begin{cases}
        r \pm 1 = d_{1} s_{1}^{2}, \\
        r \mp 1 = 2d_{2} s_{2}^{2},
    \end{cases}
\end{equation*}
with $d_{1}d_{2}=D$ and $s_{1}s_{2}=s$. Due to Azizi \cite{MR2296826}, $D(r \pm 1)$ is not a square and $d_1 \neq D$. \\

If $d_1=p$ or $d_1=q$ we have 
$$-1=\left(\frac{d_1}{d_2} \right)=\left(\frac{d_1 s_{1}^2}{d_2} \right)=\left(\frac{r \pm 1}{d_{2}} \right)=\left(\frac{r\mp1\pm2}{d_2} \right)= \left(\frac{\pm2}{d_{2}} \right)=1,$$
 which is impossible. We can conclude that $d_1=1$ and $d_2=D$.\\
 
 Let $z=s_1+s_2 \sqrt{2D}$, then $z^2=2 \epsilon_{2D}$, which implies $\sqrt{2 \epsilon_{2D}} \in F_0$. Since $\sqrt{2} \in K_1$, we must have $\sqrt{\epsilon_{2D}} \in K_1$. From Theorem \ref{kuroda}, since $\epsilon_2$ and $\epsilon_D$ have norm $-1$, we must have $\{ \epsilon_D, \sqrt{\epsilon_{2D}}, \epsilon_2 \}$ as a F.S.U. Therefore, $q(K_1) = [E_{K_1}:E(K)E(F_0)E(\mathbf Q_1)]=2$ and we have 
 $$|A_1|=\frac{q(K_1)|A_0||A(F_0)||A(\mathbf Q_1)|}{4}=\frac{2 \cdot 2 \cdot 4 \cdot 1}{4}=4.$$
 Since the $2$-rank is $2$, the result follows.
\end{proof}

\subsection{The subfield \texorpdfstring{$F_1$}{F1} and a bound on \texorpdfstring{$A_2$}{A2}}
We remember that $F_1 = \Q(\sqrt{(2+\sqrt{2})D}$ is the quadratic subextension of $K_2/\mathbf{Q}_1$ different from $K$ and $\mathbf Q_2$. We let $\alpha = 2+\sqrt{2} $ and $\delta = \alpha pq$. Then, $F_1=\mathbf Q_1(\sqrt{\delta})$. Hence, by Theorem \ref{th: lemmermeyer biquadratic class number}, 
    $$h(K_2)= \frac 18 q(K_2) h(F_1) h(\mathbf Q_2) h(K_1)$$

 In this section, we focus on computing $h(F_1)$. We let $A^{+}(L)$ denote the $2$-part of the narrow class group.

\begin{theorem} \label{th: r_4 of F1}
    Let $K=\Q(\sqrt{pq})$ with $p \equiv 1 \mod 8$, $q\equiv 9 \mod 16$ and $\kron{p}{q}=-1$. Then $r_4(A^+(F_1))=0$. Furthermore, if $\kron{2}{p}_4 \kron{p}{2}_4=-1$ or $\kron{2}{q}_4 \kron{q}{2}_4=-1$, then $r_2(A^+(F_1))=3$ and therefore $A^+(F_1)=(\Z/2\Z)^3$. \\
    
    In particular, this holds when $\kron{2}{p}\kron{2}{q}\kron{pq}{2}=-1$.  
\end{theorem}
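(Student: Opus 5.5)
We work in $F_1=\mathbf Q_1(\sqrt{\delta})$ with $\delta=\alpha pq$, $\alpha=2+\sqrt2$, over the base $F=\mathbf Q_1=\Q(\sqrt2)$. The base has $h_F=1$ and unit group $\langle -1,\epsilon_2\rangle$, so Theorem~\ref{th: generalized Redei} and the $2$-rank lemma that follows it both apply. Everything is then a matter of computing local Hilbert symbols $(\cdot,\delta)_{\fk l}$ at the ramified primes of $F_1/\mathbf Q_1$.

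\textbf{Step 1: the ramified primes.} Since $p,q\equiv 1\bmod 8$, both split in $\mathbf Q_1$, say $p=\fk p\fk p'$ and $q=\fk q\fk q'$. All four primes ramify in $F_1$. The prime $(\sqrt2)$ also ramifies, because $\alpha=\sqrt2\,\epsilon_2$ has odd valuation. The infinite places are unramified, since $\delta$ is totally positive. Hence $m=5$.

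\textbf{Step 2: the $2$-rank.} Form the $5\times 2$ matrix $M_{F_1/\mathbf Q_1}=((\e_j,\delta)_{\fk l_i})$ with $\e_j\in\{-1,\epsilon_2\}$. At the odd primes, $(-1,\delta)_{\fk l}=\kron{-1}{\fk l}=1$, since the residue field is $\F_p$ or $\F_q$ with $p,q\equiv 1\bmod 4$. The $(\sqrt2)$-entry is then forced by the product formula, so the first column vanishes. For the second column we have
$$(\epsilon_2,\delta)_{\fk p}=\kron{\epsilon_2}{\fk p}=\kron{1+a}{p},\qquad a^2\equiv 2 \bmod p.$$
By the classical relation $\kron{1+\sqrt2}{p}=\kron{2}{p}_4\kron{p}{2}_4$ (Scholz / Barrucand--Cohn), this equals $\kron{2}{p}_4\kron{p}{2}_4$. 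Since $\epsilon_2\epsilon_2'=-1$, the conjugate prime $\fk p'$ gives the same value. The same holds for $q$. Therefore $\rank M\le 1$, and it equals $1$ exactly when one of $\kron{2}{p}_4\kron{p}{2}_4$, $\kron{2}{q}_4\kron{q}{2}_4$ is $-1$. In that case $r_2=5-1-1=3$.

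\textbf{Step 3: the $4$-rank.} Build $R_{F_1/\mathbf Q_1}$. The prime columns use $a_{\fk l}=\pi_{\fk l}^{-1}$, where $\pi_{\fk l}$ generates $\fk l$. For the odd primes, the entries $(\pi_{\fk p},\delta)_{\fk q}=\kron{\pi_{\fk p}}{\fk q}$ are computed by reduction modulo $q$. Since $\pi_{\fk p}\pi_{\fk p}'=\pm p$ and $\kron pq=-1$, exactly one of $\pi_{\fk p},\pi_{\fk p}'$ is a nonresidue modulo $\fk q$; this gives an off-diagonal pattern like a Rédei matrix of "two conjugate pairs". The entries at the $\fk p$-rows require $\kron{\alpha q\,\pi_{\fk p}'}{\fk p}$, which involves $\kron{\alpha}{\fk p}$, i.e. $\kron{2+\sqrt2}{p}$. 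This quantity is again a quartic-type symbol $\kron{2}{p}_4$-related; on this point I am uncertain of the exact identity.

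The hypothesis $q\equiv 9\bmod 16$ will enter through $\kron{2+\sqrt2}{\fk q}$, which is essentially $\kron{2}{q}_8$-flavoured, or through the dyadic symbol at $(\sqrt2)$. Either way the product formula fixes the $(\sqrt2)$-row. The goal is to show that the $5\times 5$ (or $5\times 6$) matrix has rank $4=m-1$ in all cases, so that $r_4=0$. The step I expect to be the main obstacle is this explicit evaluation of the dyadic symbols and the symbols $\kron{2+\sqrt2}{\fk l}$. To handle it I would first find explicit generators $\pi_{\fk p}$ normalised to be totally positive or $\equiv 1$ modulo a suitable power of $\sqrt2$. I would then reduce every entry to $\kron{p}{q}$, $\kron{2}{p}_4$, $\kron{2}{q}_4$, and $q\bmod 16$, and exhibit a nonzero $4\times4$ minor independent of the remaining signs.

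\textbf{Step 4: conclusion.} With $r_4=0$ and $r_2=3$ we get $A^+(F_1)\cong(\Z/2\Z)^3$. Finally, if $\kron{2}{p}_4\kron{2}{q}_4\kron{pq}{2}_4=-1$, then $\kron{pq}{2}_4=\kron p2_4\kron q2_4$ gives
$$\Bigl(\kron{2}{p}_4\kron{p}{2}_4\Bigr)\Bigl(\kron{2}{q}_4\kron{q}{2}_4\Bigr)=-1,$$
so one of the two factors is $-1$. This yields the last assertion.
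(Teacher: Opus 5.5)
\textbf{Where the proposal stands.} Your Steps~1, 2 and~4 coincide with the paper's proof, subject to a caveat at the end. The paper uses the same matrix $M_{F_1/\mathbf Q_1}$: a vanishing $-1$ column, and an $\epsilon_2$ column $(0,\alpha_p,\alpha_p,\alpha_q,\alpha_q)^T$ obtained from Scholz's identity. Your use of $\kron{pq}{2}_4=\kron{p}{2}_4\kron{q}{2}_4$ for the final assertion is also fine.

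The gap is Step~3, which carries the main claim $r_4(A^+(F_1))=0$.
\begin{itemize}
\item You describe a strategy but never exhibit a nonsingular $4\times 4$ minor.
\item You leave $\kron{\alpha}{\fk l}$ undetermined.
\item You guess that $q\equiv 9\bmod 16$ enters through an octic symbol. It actually enters through $\kron{q}{2}_4$.
\end{itemize}
Your plan to express every entry through the given data also cannot work. The $\fk p$-block entries $(\pi,\delta)_{\fk p}$ and $(\bar\pi,\delta)_{\fk p}$ involve $\kron{p}{2}_4$ and the individual symbol $\kron{\bar\pi}{\fk p}$, and the hypotheses control neither.

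\textbf{The missing argument (the paper's).} The product formula already gives $\rank R_{F_1/\mathbf Q_1}\le 4$. So it suffices to show that one minor is nonsingular: rows $\fk p,\bar{\fk p},\fk q,\bar{\fk q}$ and columns $\pi,\bar\pi,\lambda,\bar\lambda$, where $\tau$ generates $\Gal(\mathbf Q_1/\Q)$, $\bar\pi=\tau\pi$ and $\bar\lambda=\tau\lambda$. No dyadic symbol and no normalisation of generators is needed.

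Since $\tau(\delta)=\delta(1-\sqrt2)^2$, one has $(x,\delta)_{\fk l}=(\tau x,\delta)_{\tau\fk l}$. This forces the shape
\[
\begin{pmatrix} a & b & c & d\\ b & a & d & c\\ e & f & g & h\\ f & e & h & g \end{pmatrix},
\qquad \det=(a+b)(g+h)+(c+d)(e+f)\ \text{ over } \F_2 .
\]
Your observation $\kron{\pi}{\fk q}\kron{\bar\pi}{\fk q}=\kron{p}{q}=-1$, and the same for $\lambda,\bar\lambda$ at $\fk p$, gives $c\ne d$ and $e\ne f$.

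The identity you were missing holds for $\fk l\mid \ell\in\{p,q\}$:
\[
\kron{\alpha}{\fk l}=\kron{\sqrt2}{\fk l}\kron{\epsilon_2}{\fk l}=\kron{2}{\ell}_4\cdot\kron{2}{\ell}_4\kron{\ell}{2}_4=\kron{\ell}{2}_4 .
\]
It follows from $\alpha=\sqrt2\,\epsilon_2$ and the same Scholz identity you used in Step~2.

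Now write $\delta=\lambda\cdot\pi\bar\pi\bar\lambda\alpha$. Then
\[
(\lambda,\delta)_{\fk q}=\kron{p}{q}\kron{q}{2}_4\kron{\bar\lambda}{\fk q}=\kron{\bar\lambda}{\fk q}=(\bar\lambda,\delta)_{\fk q},
\]
because $\kron{p}{q}=-1$ and $\kron{q}{2}_4=-1$ when $q\equiv 9\bmod 16$. So $g=h$, and $\det=1$ whatever $a,b$ are.

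The same computation gives $a=b$ if and only if $p\equiv 9\bmod 16$. Hence this minor is singular when $p\equiv q\equiv 1\bmod 16$. The hypothesis on $q$ is used exactly to pin down the $\fk q$-block, and the $\fk p$-block can be left unknown.

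\textbf{Caveat (shared with the paper).} The $2$-rank lemma, with $M$ built from all of $E(\mathbf Q_1)=\langle -1,\epsilon_2\rangle$, is Chevalley's formula for the \emph{ordinary} class group.
\begin{itemize}
\item For the narrow group only totally positive units of $\mathbf Q_1$ enter. These are $\epsilon_2^{2k}=N_{F_1/\mathbf Q_1}(\epsilon_2^k)$, all norms, so $r_2(A^+(F_1))=5-1=4$.
\item Concretely, $\pm1,\pm\epsilon_2$ represent all classes of $(\Z[\sqrt2]/4)^\times$ modulo squares. So $\pi,\bar\pi,\lambda,\bar\lambda$ can be normalised to be squares modulo $4$.
\item Adjoining the square roots of these normalised elements gives a $(\Z/2\Z)^4$-extension of $F_1$ unramified at every finite prime.
\end{itemize}
Thus the conclusion $(\Z/2\Z)^3$ in your Steps~2 and~4 holds for $A(F_1)$, which is the group the paper uses afterwards, but not for $A^+(F_1)$. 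The minor argument above still yields $r_4=0$, so in fact $A^+(F_1)\simeq(\Z/2\Z)^4$.
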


\begin{proof}
    We start with the $2$-rank. We compute the generalized \Redei matrix $M_{F_1/\mathbf Q_1}$. The ramified primes in $F_1/\mathbf Q_1$ are the primes $\fk P$ above $2$, $\fk p, \bar{\fk p}$ above $p$ and $\fk q, \bar{\fk q}$ above $q$. 
\\ Write $$E(\mathbf Q_1)=<-1,\epsilon_2>$$ where $\epsilon_2=1+\sqrt{2}$ is the fundamental unit of $\mathbf Q_1$. 

\textbf{The $-1$ column:}
\\ $\bullet $ For $\fk p, \bar{\fk p}$, we have $\# \ot_F/\fk p = p \equiv 1 \mod 4$ and hence $(-1,\delta)_{\fk p}=(-1,\delta)_{\bar{\fk p}}=1$.
\\ $\bullet$ For $\fk q, \bar{\fk q}$, we have $\# \ot_F/\fk q = q \equiv 1 \mod 4$ and hence $(-1,\delta)_{\fk q_1}=(-1,\delta)_{\fk q_2}=1$.
\\ $\bullet$ For $\fk P$, by the product formula, we have $(-1,\delta)_{\fk P}=1$.
\\ With the identification of $0$ with $1$ and $1$ with $-1$, we conclude that the first column of the matrix is zero.

\textbf{The $\epsilon_2$ column:}\\
$\bullet $ For $\fk p, \bar{\fk p}$, the local symbol reduced to the quadratic residue symbol: 
\begin{align}
(\epsilon_2,\delta)_{\fk p} = \kron{\epsilon_2}{\fk p} = \kron{1 +\sqrt{2}}{p} \\
(\epsilon_2,\delta)_{\bar{\fk p}} = \kron{\epsilon_2}{\bar{\fk p}} = \kron{1- \sqrt{2}}{p}
\end{align}
Notice that $\kron{1 +\sqrt{2}}{p} \kron{1- \sqrt{2}}{p} = \kron{-1}{p}$=1 and therefore $$(\epsilon_2,\delta)_{\fk p_1} = (\epsilon_2,\delta)_{\fk p_2}$$
In fact, by Scholz's reciprocity law \cite[Prop~5.8, page~160]{MR1761696}, we have that $$\kron{1 +\sqrt{2}}{p} = \kron{2}{p}_4 \kron{p}{2}_4$$ and similarly for $q$. 
\\ $\bullet$ By the product formula, we have that $(\epsilon_2,\delta)_{\fk P}=1$.

In summary, the matrix becomes: 
$$\begin{array}{c} M_{E/F}\;=\;
\begin{pmatrix}
0 & 0\\
0 & \alpha_p\\
0 & \alpha_p\\
0 & \alpha_q\\
0 & \alpha_q
\end{pmatrix},
\qquad \alpha_p,\alpha_q\in\{0,1\} \end{array}$$

where $\alpha_p=0$ if $\kron{2}{p}_4 \kron{p}{2}_4=1$ and $\alpha_p=1$ otherwise, and similarly for $\alpha_q$. Therefore, if $\alpha_p=1$ or $\alpha_q=1$, then $r_2(A^+(F_1))=3$.

\textbf{Now for the 4-rank:}
By the product rule, the sum of all the rows of each column is zero. Therefore, the rank of the matrix is at most $4$. We show that it is exactly $4$ by showing that the first four rows corresponding to the primes $\fk p_1, \fk p_2, \fk q_1, \fk q_2$ are linearly independent.

Write $p = \pi \bar{\pi} $ and $q = \lambda \bar{\lambda}$ in $\mathbf Q_1$. Let $\Gal(\mathbf Q_1/\Q)=\{1,\tau \}$. Then clearly, $\tau(\fk q) = \bar{\fk q}$ and $\tau(\fk p) = \bar{\fk p}$. On the other hand, $\tau(\delta) = pq (2-\sqrt{2}) = \delta (1-\sqrt{2})^2$ and so 
$$\tau(\delta) \equiv \delta \in \mathbf{Q}_1^\times/(\mathbf{Q}_1^\times)^2$$ Therefore, the local Hilbert symbol behaves equivariently as 
$$(a,\delta)_{\fk q} = (\tau (a),\delta)_{\bar{\fk q}}$$ 
This already gives us that $$(\pi,\delta)_{\fk q} = (\bar{\pi},\delta)_{\bar{\fk q}}, \qquad (\lambda,\delta)_{\fk p} = (\bar{\lambda},\delta)_{\bar{\fk p}}$$

\begin{itemize}
    \item \textbf{Off-diagonal entries}: 
    At a non-dyadic prime $\fk t |\delta$, the extenstion $(F_1)_{\fk t}/(\mathbf Q_1)_{\fk t}$ is ramified. So if $u$ is a unit at $\fk t$, then $$(u,\delta)_{\fk t} = \kron{u}{\fk t}$$
Therefore in the $\fk q$- row, we have 
$$(\pi,\delta)_\fk q(\bar\pi,\delta)_\fk q
=\left(\frac{\pi}{\fk q}\right)\left(\frac{\bar\pi}{\fk q}\right)
=\left(\frac{\pi\bar\pi}{\fk q}\right)
=\left(\frac{p}{\fk q}\right)
=\left(\frac pq\right)=-1$$

Similarly, in the $\fk p$-row, we have
$$(\lambda,\delta)_\fk p(\bar\lambda,\delta)_\fk p
=\left(\frac{\lambda}{\fk p}\right)\left(\frac{\bar\lambda}{\fk p}\right)
=\left(\frac{\lambda\bar\lambda}{\fk p}\right)
=\left(\frac{q}{\fk p}\right)
=\left(\frac qp\right)=-1$$

\item \textbf{The diagonal entries:}
\\ Write $$\delta = pq(2+\sqrt{2})= \pi \bar{\pi} \lambda \bar \lambda(2+\sqrt{2}) = \lambda u_{\fk q}$$ where $u_{\fk q} \in \ot_{\mathbf Q_1,\fk q}^\times$  

Then 
\begin{align}
(\lambda,\delta)_{\fk q} = (u_\fk q, \delta)_{\fk q} & = \kron {u_\fk q}{\fk q}= \kron{\bar \lambda}{\fk q} \kron{p}{\fk q} \kron{2+\sqrt 2}{\fk q} \\ = \kron{\bar \lambda}{ \fk q} \kron{p}{q} \kron{2+\sqrt{2}}{\fk q} & = - \kron{\bar \lambda}{\fk q} \kron{2+\sqrt{2}}{\fk q}
\end{align}

Now, $$\kron{2+\sqrt{2}}{\fk q}= \kron{\sqrt{2}}{\fk q} \kron{1+\sqrt{2}}{\fk q} = \kron{2}{q}_4 \kron{q}{2}_4 \kron{2}{q}_4 = \kron{q}{2}_4 $$ 
with the last quantity being equal to $-1$ when $q \equiv 9 \mod 16$. Therefore $$(\lambda,\delta)_{\fk q} = \kron{\bar \lambda}{\fk q}$$

But since $\bar \lambda $ is a unit at $\fk q$, we also have have $(\bar \lambda, \delta)_{\fk q} = \kron{\bar \lambda}{\fk q}$. Therefore, $$(\lambda,\delta)_\fk q=(\bar\lambda,\delta)_\fk q$$

\end{itemize}
Overall, the matrix becomes 
$$ R_{F_1/\mathbf Q_1}\begin{pmatrix}
    a & b & c & d \\
    b & a & d & c \\
    e & f & g & g \\
    f & e & g & g \\
\end{pmatrix}
$$
with values in $\F_2$ and $c\neq d, e\neq f$. It has determinant $$\det(R)\equiv e^2c^2 +d^2 f^2 -e^2d^2-c^2f^2 = (c^2-d^2)(e^2-f^2) \equiv 1 \mod 2$$ 
Therefore, $\rank(R)=4$ and $r_4(A^+(F_1))=0$.

\end{proof}

\begin{remark}\label{rem:F1-upper-bound}
Under the product condition
$$
\left(\frac{D}{2}\right)_4
\left(\frac{2}{p}\right)_4
\left(\frac{2}{q}\right)_4=-1,
$$
the computation above shows that
$$
r_4(A^+(F_1))=0,\qquad r_2(A^+(F_1))=3.
$$
However, there are exactly $5$ ramified primes in $F_1/\mathbf Q_1$, namely: $$\fk P_2, \fk p, \bar{ \fk p}, \fk q, \bar{ \fk q}$$ Applying Chevalley's formula (Theorem \ref{th: Chevalley}) to the extension $F_1/\mathbf Q_1$, we get $$r_2(A(F_1))=3,$$ and therefore $$A(F_1)\simeq (\Z/2\Z)^3.$$

Consequently, applying the Kuroda--Lemmermeyer class number formula to
the biquadratic extension $K_2/\mathbf Q_1$, and using
$$
|A(K_1)|=4,\qquad |A(\mathbf Q_2)|=1,
$$
we obtain, on $2$-primary parts,
$$
|A(K_2)|
=
\frac{q(K_2)|A(K_1)||A(F_1)||A(Q_2)|}{8}
=
\frac{q(K_2)|A(F_1)|}{2}
=
4q(K_2).
$$
\end{remark}

\subsection{A Kumakawa-type capitulation argument}
Here we follow \cite{MR4262274} to exclude the possibility of $q(K_2)=2$. 

\begin{lemma}[Kumakawa-type capitulation argument]
\label{lem:kumakawa-capitulation}
Let $K_\infty/K$ be the cyclotomic $\Z_2$-extension, and let
$K_n$ denote its $n$-th layer. Let $A_n$ be the $2$-primary part of
the class group of $K_n$. Let $\fk q$ be a prime of $K$, and
suppose that
$$
A_0=\langle[\fk q]\rangle\simeq \Z/2\Z.
$$
Let $\fk q_1,\fk q_2$ be the primes of $K_2$ above
$\fk q$. Let $L_i$ be the maximal unramified abelian
$2$-extension of $K_i$. Assume that:
\begin{enumerate}
    \item $A_1\simeq \Z/2\Z\times \Z/2\Z$;
    \item $L_1\cap K_2=K_1$;
    \item each $\fk q_i$ splits completely in $L_1K_2/K_2$;
    \item $r_2(A_2)=2$ and $|A_2|\le 8$;
    \item the classes $a_i=[\fk q_i]\in A_2$ satisfy $a_i^2=1$.
\end{enumerate}
Then the image of $[\fk q]$ in $A_2$ is trivial.
\end{lemma}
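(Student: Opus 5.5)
We will argue by class field theory and norm maps along $K_2/K_1/K$, following Kumakawa. Write $\iota$ for the extension map $A_0\to A_2$, so that $\iota([\fk q])=[\fk q\mathcal O_{K_2}]$. Since $\fk q$ is unramified in $K_\infty/K$ (only primes above $2$ ramify), $\fk q\mathcal O_{K_2}$ is a product of the primes of $K_2$ above $\fk q$. Hypothesis (3) together with the labelling $\fk q_1,\fk q_2$ indicates that $\fk q$ splits into exactly two primes in $K_2$. We therefore expect $\fk q\mathcal O_{K_2}=\fk q_1\fk q_2$ up to the relevant Galois orbit, so that $\iota([\fk q])=a_1a_2$. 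By (5) this element is $2$-torsion in $A_2$. The goal becomes showing $a_1a_2=1$.

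\textbf{Step 1: locate $a_1,a_2$ via Artin maps.} Identify $A_2\simeq \Gal(L_2/K_2)$ and $A_1\simeq\Gal(L_1/K_1)$. By (2), $L_1\cap K_2=K_1$, so $L_1K_2/K_2$ is unramified abelian with group $\Gal(L_1/K_1)\simeq A_1\simeq(\Z/2\Z)^2$. Hence $L_1K_2\subseteq L_2$, and the restriction map $A_2\to \Gal(L_1K_2/K_2)\simeq A_1$ is identified with the norm $N_{K_2/K_1}:A_2\to A_1$, which is surjective. Hypothesis (3) says that each $a_i$ lies in the kernel of this norm, i.e. $a_i\in \Gal(L_2/L_1K_2)=\ker(N_{K_2/K_1})$. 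Now $|A_2|\le 8$ and $|A_1|=4$, so this kernel has order at most $2$. Both $a_1$ and $a_2$ therefore lie in a subgroup $C$ of order $\le 2$.

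\textbf{Step 2: use the Galois action.} Let $\sigma$ generate $\Gal(K_2/K)\simeq\Z/4\Z$, acting on $A_2$. Then $\fk q_2=\fk q_1^{\sigma}$ (or $\fk q_1^{\sigma^j}$), so $a_2=a_1^{\sigma^j}$. The subgroup $C=\ker N$ is $\sigma$-stable, and since $|C|\le2$ the action of $\sigma$ on $C$ is trivial. Hence $a_2=a_1$ and $a_1a_2=a_1^2=1$ by (5). If instead $\fk q$ is inert or totally split, we use the same principle: $\iota([\fk q])$ is the norm-type product $\prod_g a_1^{g}$ over the decomposition cosets. Every factor equals $a_1$, so the product is $a_1^{k}$ with $k$ even, which is trivial by (5).

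\textbf{Main obstacle.} The delicate point is Step 1: identifying the restriction $A_2\to\Gal(L_1K_2/K_2)$ with the norm map, and deducing $|\ker N|\le 2$. The size bound rests on surjectivity of $N_{K_2/K_1}$ on $2$-class groups, which holds because $K_2/K_1$ is totally ramified at the dyadic primes, combined with (2) and the bound $|A_2|\le8$ from (4). Hypothesis $r_2(A_2)=2$ serves as a consistency check and possibly excludes $C$ being trivial in degenerate bookkeeping. If the splitting type of $\fk q$ in $K_2$ is not exactly two primes, we will need to recheck the parity of the count in Step 2.
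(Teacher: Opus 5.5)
Your proof is correct and is essentially the paper's argument. The paper also places $a_1,a_2$ in the kernel of the surjection $A_2\to\Gal(L_1K_2/K_2)\simeq(\Z/2\Z)^2$, a kernel of order $|A_2|/4\le 2$, and then uses that $a_1,a_2$ are $\Gal(K_2/K)$-conjugate to get $a_1=a_2$. The only difference is that the paper splits into the cases $|A_2|=4$ (kernel trivial) and $|A_2|=8$ (kernel equal to $A_2^2$). Your uniform treatment shows that $r_2(A_2)=2$ is not really needed, and neither is hypothesis (5), since $a_1$ lies in a group of order at most $2$. Your aside about the inert case is wrong, because there the exponent is $k=1$, which is odd. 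It is irrelevant, though, since the statement fixes exactly two primes $\mathfrak q_1,\mathfrak q_2$ above $\mathfrak q$.
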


\begin{proof}
This is the class-field-theoretic argument used by Kumakawa in the proof of
\cite[Theorem~2.1.5]{MR4262274}; we recall the short argument for completeness.

Since $A_1\simeq \Z/2\Z\times \Z/2\Z$ and $L_1\cap K_2=K_1$, we have
$$
\Gal(L_1K_2/K_2)\simeq \Gal(L_1/K_1)\simeq \Z/2\Z\times \Z/2\Z.
$$
Let
$$
a_i=[\fk q_i]\in A_2,\qquad i=1,2.
$$
The image of $[\fk q]$ in $A_2$ is
$$
[\fk q\mathcal O_{K_2}]
=
[\fk q_1\fk q_2]
=
a_1a_2.
$$
Thus, it suffices to prove $a_1a_2=1$.

Since each $\fk q_i$ splits completely in $L_1K_2/K_2$, class field
theory gives
$$
a_i\in
\ker\left(A_2\longrightarrow \Gal(L_1K_2/K_2)\right).
$$

By assumption, $r_2(A_2)=2$ and $|A_2|\le 8$. Hence,
$$
|A_2|\in\{4,8\}.
$$

If $|A_2|=4$, then
$$
A_2\simeq \Z/2\Z\times \Z/2\Z.
$$
The extension $L_1K_2/K_2$ is an unramified abelian $2$-extension of
degree $4$, so it must coincide with $L_2$. Since each $\fk q_i$
splits completely in $L_2/K_2$, its Artin symbol is trivial, and hence 
$$
a_1=a_2=1.
$$
Therefore $a_1a_2=1$.

Now suppose that $|A_2|=8$. Since $r_2(A_2)=2$, we have
$$
A_2\simeq \Z/4\Z\times \Z/2\Z.
$$
The Artin map
$$
A_2\longrightarrow \Gal(L_1K_2/K_2)
$$
is surjective. Its target has exponent $2$, so its kernel contains
$A_2^2$. Since $|A_2|=8$ and
$$
|\Gal(L_1K_2/K_2)|=4,
$$
the kernel has order $2$. But $A_2^2$ also has order $2$, so
$$
\ker\left(A_2\longrightarrow \Gal(L_1K_2/K_2)\right)=A_2^2.
$$
Thus,
$$
a_1,a_2\in A_2^2.
$$
The primes $\fk q_1$ and $\fk q_2$ are conjugate over $K$,
so $a_1$ and $a_2$ are conjugate under $\Gal(K_2/K)$. Since $A_2^2$
has order $2$, its unique nontrivial element is fixed by every automorphism.
Hence,
$$
a_1=a_2.
$$
By assumption $a_i^2=1$, so
$$
a_1a_2=a_1^2=1.
$$

In both cases, the image of $[\fk q]$ in $A_2$ is trivial.
\end{proof}

\begin{lemma}\label{lem:q less than 4 implies lambda 0}
Let $K=\Q(\sqrt{D})$ with $D=pq$, where
$$
p\equiv 1 \mod 8,\qquad q\equiv 9 \mod{16},\qquad \left(\frac{p}{q}\right)=-1.
$$
Assume moreover that
$$
\left(\frac{2}{p}\right)_4\left(\frac{2}{q}\right)_4\left(\frac{D}{2}\right)_4=-1
$$
and that
$$
q(K_2)<4.
$$
Then
$$
\lambda_2(K)=0.
$$
\end{lemma}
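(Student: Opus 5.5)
The strategy is to verify the hypotheses of the Kumakawa-type capitulation lemma (Lemma~\ref{lem:kumakawa-capitulation}) for the prime $\fk q$ of $K$ above $q$. That lemma shows $A_0$ capitulates in $K_2$, hence $H_0=A_0$. Combined with property $(P_n)$ and Greenberg's split criterion (Theorem~\ref{th: greenberg split Bn}), this gives $\lambda_2(K)=0$.

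\textbf{Preliminaries.} The product condition $\kron{2}{p}_4\kron{2}{q}_4\kron{D}{2}_4=-1$ gives $\epsilon_D\notin N(K_1^\times)$ by Lemma~\ref{lem: ed is norm iff}. By Lemma~\ref{lem: property P_n}, $(P_n)$ then holds for all $n$. By Lemma~\ref{lem: A0=C2}, $A_0=\langle[\fk q]\rangle\simeq\Z/2\Z$. It therefore suffices to show that $[\fk q]$ becomes trivial in $A_2$.

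\textbf{Checking the hypotheses.} Hypothesis (1) is Lemma~\ref{lem: A1=C2xC2}. For (4), Remark~\ref{rem:F1-upper-bound} gives $|A_2|=4q(K_2)$. Since $q(K_2)<4$ and $q(K_2)$ is a power of $2$, we get $|A_2|\le 8$. The $2$-rank of $A_n$ is $2$ for all $n\ge1$ by the cited result of \cite{MR2783388}, since one of $\kron{2}{p}_4\kron{p}{2}_4$, $\kron{2}{q}_4\kron{q}{2}_4$ equals $-1$.

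For (2), $K_2/K_1$ is ramified at the dyadic primes while $L_1/K_1$ is unramified, so $L_1\cap K_2=K_1$.

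For (3), I would note that $q$ is inert in $\mathbf Q_1/\Q$ when $q\equiv 9\bmod 16$? No: $q\equiv1\bmod 8$ splits in $\mathbf Q_1$, and $q\equiv 9\bmod16$ means $\kron{2}{q}_4$ data controls splitting in $\mathbf Q_2$. The actual requirement is that $\fk q$ has exactly two primes above it in $K_2$, so $q$ splits in $\mathbf Q_1$ but not further in $\mathbf Q_2$. This is exactly what $q\equiv 9\bmod 16$ gives, since $q$ splits completely in $\mathbf Q_n$ iff $q\equiv\pm1\bmod 2^{n+2}$. The splitting of $\fk q_i$ in $L_1K_2/K_2$ is equivalent to the primes $\fk q'$ of $K_1$ above $\fk q$, of which there are two, having a Frobenius in $\Gal(L_1/K_1)\simeq A_1$ that becomes trivial after raising to the residue degree $f=2$ of $K_2/K_1$ at $\fk q'$. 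Since $A_1$ has exponent $2$, $[\fk q']^2=1$, so each $\fk q_i$ splits completely. This works provided $\fk q'$ is inert in $K_2/K_1$, which follows from $q\equiv 9\bmod 16$.

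For (5), I would argue $a_i^2=[\fk q_i^2]$. Since $\fk q_i$ is ramified over $F_1$ (or $\mathbf Q_2$), $\fk q_i^2$ is the extension of a prime of $\mathbf Q_2$ above $q$, because $q$ ramifies in $K$. As $h(\mathbf Q_2)$ is odd (indeed $|A(\mathbf Q_2)|=1$), the $2$-part of the class of $\fk q_i^2$ is trivial, so $a_i^2=1$ in $A_2$.

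\textbf{Conclusion and main obstacle.} The lemma now yields that the image of $[\fk q]$ in $A_2$ is trivial. Hence $H_0=A_0$, and Theorem~\ref{th: greenberg split Bn} gives $\lambda_2(K)=0$. I expect the main obstacle to be (3): pinning down precisely the decomposition of $q$ in $K_2$ and translating the Frobenius in $L_1K_2/K_2$ to $[\fk q']^{f}$ in $A_1$. The delicate point is whether $\fk q'$ is inert in $K_2/K_1$, which depends on the residue degree of $q$ in $\mathbf Q_2$. I would verify this using $q\equiv 9\bmod 16$ and $K_2=K_1\mathbf Q_2$.
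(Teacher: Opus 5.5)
Your proposal is correct and follows the paper's proof. You verify the same five hypotheses of Lemma~\ref{lem:kumakawa-capitulation} from the same inputs, and your Frobenius-power argument for (3) is just a reformulation of the paper's decomposition-group argument in $FK_2/K_1$. There is one small slip in (5): the primes $\mathfrak q_i$ are unramified over $F_1$, because the inertia group above $q$ in $K_2/\mathbf Q_1$ is $\Gal(K_2/\mathbf Q_2)$, so only the $\mathbf Q_2$ version of that step is valid.
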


\begin{proof}
Let $\fk q$ be the unique prime of $K$ above $q$. By
Lemma~\ref{lem: A0=C2},
$$
A_0=\langle[\fk q]\rangle\simeq \Z/2\Z.
$$
Moreover, by Lemma~\ref{lem: property P_n}, the property $(P_n)$ holds
for all $n\ge 1$. By Theorem~\ref{th: greenberg split Bn}, it is enough to
show that $[\fk q]$ capitulates in $K_\infty$. It is enough to prove
that its image in $A_2$ is trivial.

Let $L_1$ be the maximal unramified abelian $2$-extension of $K_1$, and
let $L_2$ be the maximal unramified abelian $2$-extension of $K_2$. Since
$K_2/K_1$ is totally ramified at the dyadic primes while $L_1/K_1$ is
unramified, we have
$$
L_1\cap K_2=K_1.
$$
Therefore, by Lemma~\ref{lem: A1=C2xC2},
$$
\Gal(L_1K_2/K_2)
\simeq
\Gal(L_1/K_1)
\simeq
A_1
\simeq
\Z/2\Z\times \Z/2\Z.
$$

Let $\fk q_1,\fk q_2$ be the primes of $K_2$ above
$\fk q$. We claim that each $\fk q_i$ splits completely in
$L_1K_2/K_2$. This is the same splitting argument as in Kumakawa's proof
of \cite[Theorem~2.1.5]{MR4262274}. Indeed, let $F/K_1$ be any quadratic
subextension of $L_1/K_1$. Then $FK_2/K_1$ is biquadratic. Since
$q\equiv 9\mod {16}$, the primes of $K_1$ above $q$ are inert in
$K_2/K_1$, whereas $F/K_1$ is unramified.  Indeed, for a prime $\fk q\mid q$ of $\mathbf Q_1$, one has
$$
\left(\frac{\alpha}{\fk q}\right)
=
\left(\frac{q}{2}\right)_4
=
-1,
$$
because $q\equiv 9\mod {16}$. Hence the primes above $q$ are inert in
$\mathbf Q_2/\mathbf Q_1$, and therefore also in $K_2/K_1$. Hence, for a prime above $q$,
the decomposition group in $FK_2/K_1$ is cyclic of order $2$ and maps
nontrivially to $\Gal(K_2/K_1)$. Therefore its intersection with
$$
\Gal(FK_2/K_2)
$$
is trivial. This means that $\fk q_i$ splits completely in
$FK_2/K_2$. Since this holds for every quadratic subextension
$F/K_1$ of $L_1/K_1$, the prime $\fk q_i$ splits completely in
$L_1K_2/K_2$.

We now verify the remaining hypotheses of
Lemma~\ref{lem:kumakawa-capitulation}. By Remark~\ref{rem:F1-upper-bound},
$$
|A_2|= 4q(K_2).
$$
Since $q(K_2)<4$ and $q(K_2)$ is a power of $2$, we have
$$
q(K_2)\in\{1,2\},
$$
and hence
$$
|A_2|\le 8.
$$
On the other hand, by the $2$-rank computation in the cyclotomic tower,
$$
r_2(A_2)=2.
$$

Finally, if $a_i=[\fk q_i]\in A_2$, then
$$
a_i^2=1.
$$
Indeed, if $\fk Q_i$ is the prime of $\mathbf Q_2$ below
$\fk q_i$, then $\fk Q_i$ ramifies in $K_2/\mathbf Q_2$, so
$$
\fk Q_i\mathcal O_{K_2}=\fk q_i^2.
$$
Since $A(\mathbf Q_2)=1$, the class of $\fk Q_i\mathcal O_{K_2}$ is
trivial in $A_2$. Therefore $a_i^2=1$.

Lemma~\ref{lem:kumakawa-capitulation} now shows that the image of
$[\fk q]$ in $A_2$ is trivial. Thus, $[\fk q]$ capitulates in
$K_2$, and therefore in $K_\infty$. Since
$$
A_0=\langle[\fk q]\rangle,
$$
we obtain
$$
H_0=A_0.
$$
Applying Theorem~\ref{th: greenberg split Bn} together with
Lemma~\ref{lem: property P_n}, we conclude that
$$
\lambda_2(K)=0.
$$
\end{proof}

\subsection{Computing the index}

Throughout this subsection put
$$
D=pq, \quad \mathbf Q_1=\Q(\sqrt 2), \quad \alpha=2+\sqrt 2,
$$
so that
$$
K_1=\mathbf Q_1(\sqrt D), \quad F_1=\mathbf Q_1(\sqrt{\alpha D}), \quad \mathbf Q_2=\mathbf Q_1(\sqrt \alpha), \quad K_2=K_1F_1=K_1\mathbf Q_2=F_1 \mathbf Q_2.
$$
We assume
$$
p \equiv 1 \mod 8, \quad q \equiv 9 \mod {16}, \quad \kron{p}{q}=-1,
$$
and, when it is needed, the two hypotheses
$$
\kron{2}{p}_4\kron{2}{q}_4\kron{D}{2}_4=-1
$$
and
$$
\kron{2}{p}_4=-1 \quad \text{or} \quad \kron{2}{q}_4=-1.
$$
Let
$$
H=E(K_1)E(F_1)E(\mathbf Q_2), \quad S=\frac{H \cap K_2^{\times 2}}{H^2}=\frac{E(K_2)^2}{H^2}.
$$ 
Thus $q(K_2)=[E(K_2):H]$ is the Hasse unit index attached to the biquadratic extension $K_2/\mathbf Q_1$.

\begin{lemma}\label{lem:wada-description}
With the notation above,
$$
q(K_2)=[E(K_2):H]
=
\left|\frac{H\cap K_2^{\times 2}}{H^2}\right|
=
|S|.
$$
\end{lemma}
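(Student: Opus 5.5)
The plan is to compare the two quotients through the squaring map on $E(K_2)$. There are three steps: show $E(K_2)^2\subseteq H$, which identifies the two descriptions of $S$; show that squaring induces an isomorphism $E(K_2)/H\simeq E(K_2)^2/H^2$; and note that all indices involved are finite.

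\textbf{Step 1: $E(K_2)^2\subseteq H$.} Write $\Gal(K_2/\mathbf Q_1)=\{1,\sigma,\tau,\sigma\tau\}$, with fixed fields $K_1$, $F_1$, $\mathbf Q_2$ for $\langle\sigma\rangle$, $\langle\tau\rangle$, $\langle\sigma\tau\rangle$ in some order. In the group ring there is the identity
$$
(1+\sigma)+(1+\tau)+(1+\sigma\tau)=2+(1+\sigma+\tau+\sigma\tau).
$$
Applied to $u\in E(K_2)$ it gives
$$
u^2=\frac{N_{K_2/K_1}(u)\,N_{K_2/F_1}(u)\,N_{K_2/\mathbf Q_2}(u)}{N_{K_2/\mathbf Q_1}(u)}.
$$
Each factor in the numerator lies in the unit group of the corresponding quadratic subextension. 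The denominator lies in $E(\mathbf Q_1)\subseteq E(K_1)$. Hence $u^2\in H$.

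Step 1 immediately gives $E(K_2)^2\subseteq H\cap K_2^{\times 2}$. Conversely, suppose $h\in H$ and $h=y^2$ with $y\in K_2^\times$. Then $y$ is a root of the monic polynomial $X^2-h$ over $\mathcal O_{K_2}$, so it is integral, and likewise $y^{-1}$ is integral. Thus $y\in E(K_2)$, so $H\cap K_2^{\times 2}=E(K_2)^2$. This justifies the two expressions for $S$ given before the statement.

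\textbf{Step 2: the squaring isomorphism.} Consider the homomorphism
$$
\phi: E(K_2)\to E(K_2)^2/H^2,\qquad u\mapsto u^2H^2.
$$
It is clearly surjective. For the kernel, suppose $u^2=h^2$ with $h\in H$. Then $(uh^{-1})^2=1$, so $u\in\{\pm1\}H$. Since $K_2$ is totally real, the only torsion in $E(K_2)$ is $\{\pm1\}$, and $-1\in E(\mathbf Q_1)\subseteq H$. Hence $\ker\phi=H$, and
$$
E(K_2)/H\simeq E(K_2)^2/H^2=S.
$$

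\textbf{Step 3: finiteness.} The index $q(K_2)$ is finite because $H$ has full rank in $E(K_2)$: both have rank $7$. Indeed $E(K_1)$, $E(F_1)$, $E(\mathbf Q_2)$ each have rank $3$ and share $E(\mathbf Q_1)$ of rank $1$, giving $3\cdot 2+1=7$, which equals $[K_2:\mathbf Q]-1$. Step 1 shows $E(K_2)/H$ is killed by $2$, so $q(K_2)$ is a power of $2$, as used in Lemma~\ref{lem:q less than 4 implies lambda 0}.

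The main point requiring care is Step 1, specifically the norm identity and the claim that $N_{K_2/\mathbf Q_1}(u)$ lies in $H$. Once that is settled, the rest is formal bookkeeping.
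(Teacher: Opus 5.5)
Your proof is correct and follows the paper's argument: the norm identity gives $E(K_2)^2\subseteq H$, and since the torsion $\{\pm1\}$ of $E(K_2)$ lies in $H$, squaring induces $E(K_2)/H\simeq E(K_2)^2/H^2=S$. Your integrality argument for $H\cap K_2^{\times 2}\subseteq E(K_2)^2$ spells out a step the paper leaves implicit, and the rank count in Step 3 is unnecessary (and, as phrased, loosely justified), since Step 1 already gives $[E(K_2):H]\le [E(K_2):E(K_2)^2]=2^8$.
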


\begin{proof}
Since $K_2/\mathbf Q_1$ is biquadratic with quadratic subfields
$K_1,F_1,\mathbf Q_2$, for every $u\in E(K_2)$ we have
\[
u^2
=
\frac{
N_{K_2/K_1}(u)\,N_{K_2/F_1}(u)\,N_{K_2/\mathbf Q_2}(u)
}{
N_{K_2/\mathbf Q_1}(u)
}.
\]
Notice that  $E(K_2)^2\subset H$ and hence
\[
\frac{H\cap K_2^{\times 2}}{H^2}
=
\frac{E(K_2)^2}{H^2}.
\]

Since $K_2$ is totally real, we have
\[
E(K_2)[2]=\{\pm 1\}\subset H.
\]
Therefore, the squaring map induces an isomorphism
\[
E(K_2)/H \simeq E(K_2)^2/H^2.
\]

Thus,
\[
E(K_2)/H\cong E(K_2)^2/H^2
=
\frac{H\cap K_2^{\times 2}}{H^2}.
\]
Taking cardinalities gives
\[
q(K_2)=[E(K_2):H]
=
\left|\frac{H\cap K_2^{\times 2}}{H^2}\right|
=
|S|.
\]
\end{proof}

\begin{lemma}\label{lem:unit-normalizations-K1-Q2}
There are units $u_0,u_1 \in E(K_1)$ and $w_0,w_1 \in E(\mathbf Q_2)$ such that
$$
u_0=\epsilon_D, \quad N_{K_1/\mathbf Q_1}(u_0)=-1, \quad N_{K_1/\mathbf Q_1}(u_1)=1,
$$
and
$$
N_{\mathbf Q_2/\mathbf Q_1}(w_0)=-1, \quad N_{\mathbf Q_2/\mathbf Q_1}(w_1)=\epsilon_2.
$$
Moreover
$$
E(K_1)=\langle -1,u_0,u_1,\epsilon_2\rangle, \quad E(\mathbf Q_2)=\langle -1,w_0,w_1,\epsilon_2\rangle.
$$ 
\end{lemma}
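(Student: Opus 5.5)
The plan is to treat the two fields separately, since each is a quadratic extension of $\mathbf Q_1$ and the unit groups have rank $3$. In each case we exhibit a unit whose relative norm to $\mathbf Q_1$ is as required. Then we use the relation $u^{1+\sigma}=N(u)$ to see that $E(\mathbf Q_1)$ together with one unit of norm $\pm1$ and one other unit generates a full-rank subgroup. Finally, we adjust by units of $\mathbf Q_1$ so that the norms come out exactly as stated.

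\textbf{The field $K_1$.} By Lemma~\ref{lem: A1=C2xC2}, $\{\epsilon_D,\sqrt{\epsilon_{2D}},\epsilon_2\}$ is a F.S.U.\ of $K_1$, so $E(K_1)=\langle -1,\epsilon_D,\sqrt{\epsilon_{2D}},\epsilon_2\rangle$. Take $u_0=\epsilon_D$. Since the nontrivial element of $\Gal(K_1/\mathbf Q_1)$ restricts to the conjugation of $K=\Q(\sqrt D)$, we get $N_{K_1/\mathbf Q_1}(\epsilon_D)=N_{K/\Q}(\epsilon_D)=-1$ by Theorem~\ref{th: Nm(ed)=-1}. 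For $u_1$, start from $v=\sqrt{\epsilon_{2D}}$. Its square has relative norm $N_{K_1/\mathbf Q_1}(\epsilon_{2D})=\epsilon_{2D}\epsilon_{2D}'=N(\epsilon_{2D})=1$, since conjugation $\sqrt D\mapsto-\sqrt D$ sends $\sqrt{2D}\mapsto-\sqrt{2D}$. Hence $N_{K_1/\mathbf Q_1}(v)=\pm1$. If the norm is $-1$, replace $v$ by $u_1=\epsilon_D v$, which has norm $1$. This change of generator is invertible, so $E(K_1)=\langle -1,u_0,u_1,\epsilon_2\rangle$.

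\textbf{The field $\mathbf Q_2$.} Here I would use the classical facts about $\Q(\zeta_{16})^+$: the class number is $1$, and the cyclotomic units $\eta=\sin(3\pi/8)/\sin(\pi/8)$ and its conjugates, together with $-1$ and $\epsilon_2$, generate $E(\mathbf Q_2)$. (Weber's theorem on the $2$-power cyclotomic tower gives that the unit index of the cyclotomic units is the class number, hence $1$.) Write $\mathbf Q_2=\mathbf Q_1(\sqrt\alpha)$ with $\sigma:\sqrt\alpha\mapsto-\sqrt\alpha$. Then:
\begin{enumerate}
\item Let $w_0$ be a generator of $E(\mathbf Q_2)$ modulo $E(\mathbf Q_1)$. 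It satisfies $N_{\mathbf Q_2/\mathbf Q_1}(w_0)\in E(\mathbf Q_1)$. The norm map $E(\mathbf Q_2)\to E(\mathbf Q_1)$ induces $E(\mathbf Q_2)/E(\mathbf Q_1)\to E(\mathbf Q_1)/E(\mathbf Q_1)^2$, because the norm of $\epsilon\in E(\mathbf Q_1)$ is $\epsilon^2$. So it suffices to show that the norm group of units is all of $E(\mathbf Q_1)$.
\item Concretely, $\mathbf Q_2/\mathbf Q_1$ is cyclic quartic over $\Q$. The prime $2$ is totally ramified in $\mathbf Q_2/\Q$ and no other prime ramifies, and the narrow class number of $\mathbf Q_2$ is $1$ (it is known that $\Q(\zeta_{16})^+$ has units of all signatures). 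It follows that every totally positive unit of $\mathbf Q_1$ which is a local norm is a global unit norm. Explicitly, I would verify $N(1+\sqrt{2+\sqrt2})=1-\alpha=-1-\sqrt2=-\epsilon_2$ and $N(\sqrt{2+\sqrt2}+1+\sqrt2)=(1+\sqrt2)^2-\alpha=1+\sqrt2=\epsilon_2$, the latter in fact $\epsilon_2$ directly.
\item So $w_1=\sqrt\alpha+\epsilon_2$ has norm $\epsilon_2$, and $1+\sqrt\alpha$ has norm $-\epsilon_2$. Then $w_0=(1+\sqrt\alpha)w_1^{-1}$ has norm $-1$.
\end{enumerate}

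It remains to show $E(\mathbf Q_2)=\langle -1,w_0,w_1,\epsilon_2\rangle$. The index of this subgroup is controlled by its image under $u\mapsto u^{1-\sigma}$ and by the norm map. Since the norms $-1$ and $\epsilon_2$ generate $E(\mathbf Q_1)/E(\mathbf Q_1)^2$, any unit $u$ can be multiplied by elements of the subgroup to reach norm in $E(\mathbf Q_1)^2$. Then $u/\epsilon$, for suitable $\epsilon\in E(\mathbf Q_1)$, has norm $\pm1$ (using $N(-1)=1$). It remains to see that units of norm $1$ lie in $\langle -1,\epsilon_2,w_0^2,\ldots\rangle$; I expect this to be the main obstacle. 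My first attempt would be to argue via regulators: compute the regulator of $\langle -1,w_0,w_1,\epsilon_2\rangle$ numerically and compare it with the known regulator of $\Q(\zeta_{16})^+$. Failing that, I would identify $w_1,1+\sqrt\alpha$ with explicit cyclotomic units; for instance, $1+\sqrt\alpha$ is, up to sign, a product of conjugates of $\eta$. This gives generation directly from Weber's result.
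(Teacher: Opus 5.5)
Your treatment of $K_1$ follows the paper's argument, and you even justify $N_{K_1/\mathbf Q_1}(\sqrt{\epsilon_{2D}})=\pm1$, which the paper uses tacitly. Your norm computations in $\mathbf Q_2$ are also correct. Your $w_1=\epsilon_2+\sqrt\alpha$ is the paper's $w_1$, and $w_0=(1+\sqrt\alpha)w_1^{-1}$ is a valid alternative to the paper's $\epsilon_2+\sqrt2\sqrt\alpha$.

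The gap is the final claim $E(\mathbf Q_2)=\langle -1,w_0,w_1,\epsilon_2\rangle$, which you explicitly leave open. Reducing to units of relative norm $1$ only restates the problem. The cyclotomic route you propose is set up with the wrong unit: $\sin(3\pi/8)/\sin(\pi/8)=\cot(\pi/8)=1+\sqrt2=\epsilon_2$ lies in $\mathbf Q_1$. So this unit and its conjugates, together with $-1$ and $\epsilon_2$, generate only $E(\mathbf Q_1)$, of rank $1$ rather than $3$. Likewise, $1+\sqrt\alpha\notin\mathbf Q_1$ cannot be a product of them. Your step 2 about local norms and signatures is unsupported, but the explicit norm computations make it unnecessary. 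The paper itself merely asserts the fundamental-system property after a direct calculation, so this step needs a real argument either way.

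You can close the gap by working at conductor $16$. Put $\xi_a=\sin(a\pi/16)/\sin(\pi/16)$ for $a=3,5,7$. Together with $-1$, these generate the cyclotomic units of $\mathbf Q_2=\Q(\zeta_{16})^+$. Since $h(\mathbf Q_2)=1$ (the Minkowski bound is below $5$, and the only prime of small norm is the principal prime $(2-\sqrt\alpha)$ above $2$), the index formula $[E^+:C^+]=h^+$ for prime-power conductor gives $E(\mathbf Q_2)=\langle -1,\xi_3,\xi_5,\xi_7\rangle$. Now use
$$\sin(ax)/\sin x=1+2\sum_{j=1}^{(a-1)/2}\cos(2jx),$$
together with $2\cos(\pi/8)=\sqrt\alpha$, $2\cos(\pi/4)=\sqrt2$ and $2\cos(3\pi/8)=(\sqrt2-1)\sqrt\alpha$. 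This gives
\[
\xi_3=1+\sqrt\alpha,\qquad \xi_5=\epsilon_2+\sqrt\alpha=w_1,\qquad \xi_7=\epsilon_2+\sqrt2\sqrt\alpha,
\]
and a one-line expansion shows $\xi_3\xi_5=\epsilon_2\xi_7$. Hence $E(\mathbf Q_2)=\langle -1,\epsilon_2,\xi_5,\xi_7\rangle$. Your $w_0$ equals $\xi_3\xi_5^{-1}=\epsilon_2\xi_7\xi_5^{-2}$, so $E(\mathbf Q_2)=\langle -1,w_0,w_1,\epsilon_2\rangle$. The same holds for the paper's choice, since its $w_0$ is exactly $\xi_7$.

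A regulator comparison would also work. It gives about $2.4418$ for your subgroup, but it requires importing the regulator of $\Q(\zeta_{16})^+$ from elsewhere.
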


\begin{proof}
By the unit computation for $K_1$, we have $\eta=\sqrt{\epsilon_{2D}} \in K_1$ and
$$
E(K_1)=\langle -1,\epsilon_D,\eta,\epsilon_2\rangle.
$$
Set $u_0=\epsilon_D$. Since $N_{K_1/\mathbf Q_1}(u_0)=-1$, we choose
$$
u_1=\begin{cases}
\eta, & N_{K_1/\mathbf Q_1}(\eta)=1,\\
\eta\epsilon_D, & N_{K_1/\mathbf Q_1}(\eta)=-1.
\end{cases}
$$
Then $N_{K_1/\mathbf Q_1}(u_1)=1$, and $u_0,u_1,\epsilon_2$ still give a fundamental system of units of $K_1$.

For $\mathbf Q_2=\mathbf Q_1(\sqrt \alpha)$, one may take
$$
w_1=1+\sqrt 2+\sqrt{2+\sqrt 2}, \quad w_0=1+\sqrt 2+\sqrt 2\sqrt{2+\sqrt 2}.
$$
A direct calculation gives
$$
N_{\mathbf Q_2/\mathbf Q_1}(w_1)=\epsilon_2, \quad N_{\mathbf Q_2/\mathbf Q_1}(w_0)=-1,
$$
and $\{w_0,w_1,\epsilon_2\}$ is a fundamental system of units of $\mathbf Q_2$.
\end{proof}

\begin{lemma}\label{lem:base-units-square-classes}
For $L=K_1,F_1,\mathbf Q_2,K_2$, the four classes
$$
1, \quad -1, \quad \epsilon_2, \quad -\epsilon_2
$$
are distinct in $L^\times/L^{\times 2}$.
In particular, if $a \in E(\mathbf Q_1)$ and $a \in L^{\times 2}$, then $a \in E(\mathbf Q_1)^2$.
\end{lemma}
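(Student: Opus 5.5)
We need to show that $-1,\epsilon_2,-\epsilon_2$ are not squares in $L$ for each $L\in\{K_1,F_1,\mathbf Q_2,K_2\}$. Since $K_1,F_1,\mathbf Q_2\subset K_2$, it suffices to treat $L=K_2$: a non-square in $K_2$ is a fortiori a non-square in each subfield. So the plan is to check that none of $-1,\epsilon_2,-\epsilon_2$ lies in $K_2^{\times 2}$.

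\textbf{Reduction.} We apply the criterion of Theorem~\ref{th: lemmermeyer biquadratic class number} twice.

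First take the tower $K_2=K_1(\sqrt\alpha)$ over $K_1=\mathbf Q_1(\sqrt D)$. An element $u\in\mathbf Q_1^\times$ is a square in $K_2$ if and only if $u\in K_1^{\times2}$ or $u\in\alpha K_1^{\times 2}$.

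Applying the same criterion to $K_1/\mathbf Q_1$ gives, for $u\in\mathbf Q_1^\times$,
\[
u\in K_1^{\times2}\iff u\in\mathbf Q_1^{\times2}\cup D\,\mathbf Q_1^{\times2}.
\]

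For the second alternative, $u\in\alpha K_1^{\times2}$ means $u/\alpha\in K_1^{\times 2}$ with $u/\alpha\in\mathbf Q_1^\times$, so again $u/\alpha\in\mathbf Q_1^{\times2}\cup D\mathbf Q_1^{\times2}$.

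Altogether, $u\in K_2^{\times2}$ if and only if
\[
u\in\{1,D,\alpha,\alpha D\}\cdot\mathbf Q_1^{\times2}.
\]
It therefore suffices to show that none of $-1,\epsilon_2,-\epsilon_2$ lies in $\{1,D,\alpha,\alpha D\}\cdot \mathbf Q_1^{\times2}$. That is twelve conditions, which we rule out by signs and valuations.

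\textbf{Sign obstructions.} Use the two real embeddings of $\mathbf Q_1$, namely $\sqrt2\mapsto\pm\sqrt2$. Both $D>0$ and $\alpha=2+\sqrt2$ are totally positive, since $2-\sqrt2>0$. Hence every element of $\{1,D,\alpha,\alpha D\}\mathbf Q_1^{\times2}$ is totally positive. On the other hand:
\begin{itemize}
\item $-1$ is totally negative;
\item $\epsilon_2=1+\sqrt2$ has conjugate $1-\sqrt2<0$;
\item $-\epsilon_2$ is negative at the first embedding.
\end{itemize}
So none of the three lies in this set, and all twelve cases are excluded by signs alone.

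No other input is needed: this holds for every $L$ listed, and no valuation argument is required.

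\textbf{Consequence.} Suppose $a\in E(\mathbf Q_1)=\langle -1,\epsilon_2\rangle$ lies in $L^{\times2}$. Write $a=\pm\epsilon_2^k$ and reduce modulo $E(\mathbf Q_1)^2$. The class of $a$ is then one of $1,-1,\epsilon_2,-\epsilon_2$, and by the distinctness just shown it must be $1$. Hence $a\in E(\mathbf Q_1)^2$.

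The main obstacle is only bookkeeping: applying Theorem~\ref{th: lemmermeyer biquadratic class number} correctly in the two-step tower. The one thing to double-check is the total positivity of $\alpha$.
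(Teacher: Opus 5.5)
Your proof is correct and rests on the same obstruction as the paper's: nonzero squares in these totally real fields are totally positive, while $-1$, $\epsilon_2$ and $-\epsilon_2$ are each negative under some real embedding of $\mathbf Q_1$. Your intermediate computation $\mathbf Q_1^\times\cap K_2^{\times 2}=\{1,D,\alpha,\alpha D\}\,\mathbf Q_1^{\times 2}$, obtained by applying the square criterion twice, is valid but not needed. The paper argues directly: every embedding of $\mathbf Q_1$ extends to a real embedding of each $L$, so a square in $L$ cannot be negative there.
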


\begin{proof}
All fields under consideration are totally real, so $-1$ is not a square. Also $\epsilon_2=1+\sqrt 2$ is not totally positive, since its conjugate $1-\sqrt 2$ is negative. Similarly $-\epsilon_2$ is negative under the identity embedding of $Q_1$. Hence none of $-1,\epsilon_2,-\epsilon_2$ is a square in any of these fields.

The last assertion follows because
$$
E(\mathbf Q_1)/E(\mathbf Q_1)^2=\{1,-1,\epsilon_2,-\epsilon_2\}.
$$
\end{proof}

\begin{lemma}\label{lem:no-new-single-field-squares}
Under the standing assumptions of this subsection, one has
$$
E(K_1)\cap K_2^{\times 2}=E(K_1)^2,
\qquad
E(\mathbf Q_2)\cap K_2^{\times 2}=E(\mathbf Q_2)^2.
$$
\end{lemma}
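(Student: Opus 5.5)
The plan is to use the biquadratic descent criterion of Theorem~\ref{th: lemmermeyer biquadratic class number}. We have $K_2=K_1(\sqrt{\alpha})=\mathbf Q_2(\sqrt D)$. Hence an element $x\in K_1^\times$ is a square in $K_2$ if and only if $x\in K_1^{\times2}$ or $x\in\alpha K_1^{\times 2}$. Likewise, $y\in\mathbf Q_2^\times$ is a square in $K_2$ if and only if $y\in \mathbf Q_2^{\times2}$ or $y\in D\,\mathbf Q_2^{\times2}$.

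So for a unit $u\in E(K_1)\cap K_2^{\times2}$ there are two cases. In the first, $u$ is already a square in $K_1$, hence $u\in E(K_1)^2$, since a square root of a unit is a unit. In the second, $u=\alpha x^2$ with $x\in K_1^\times$. Then $(x)^2=(\alpha)^{-1}$ as ideals of $K_1$. Now $(\alpha)=(\sqrt2)$ up to a unit, because $\alpha=\sqrt2(1+\sqrt2)$, so $(\alpha)=\fk P$ is the prime of $\mathbf Q_1$ above $2$. The task is therefore to show that $\fk P\ot_{K_1}$ is not the square of an ideal of $K_1$. This holds because $2$ splits in $K$, so $\fk P$ splits in $K_1/\mathbf Q_1$ into two distinct primes, each with exponent $1$. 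That rules out the second case and gives the first equality.

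For $\mathbf Q_2$ the argument is the same. If $w=Dy^2$ with $y\in\mathbf Q_2^\times$, then $(D)\ot_{\mathbf Q_2}$ would be a square of an ideal. But $p$ and $q$ are unramified in $\mathbf Q_2$, which ramifies only at $2$, so the primes above $p$ appear with exponent $1$, a contradiction. Hence $w\in\mathbf Q_2^{\times2}\cap E(\mathbf Q_2)=E(\mathbf Q_2)^2$.

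The point needing the most care is the second alternative of the criterion. One must make sure the relevant element really has odd valuation at some prime, and this comes down to the splitting and ramification facts above: $2$ splits in $K$ because $D\equiv 1 \bmod 8$, and $p,q$ are odd hence unramified in $\mathbf Q_2$. Once these valuation checks are in place, both equalities follow, and neither the quartic symbol hypotheses nor the unit normalizations of Lemma~\ref{lem:unit-normalizations-K1-Q2} are needed for this lemma.
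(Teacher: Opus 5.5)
Your proof is correct and follows the paper's argument. You apply the quadratic square criterion to $K_2=K_1(\sqrt{\alpha})$ and to $K_2=\mathbf Q_2(\sqrt{D})$, and rule out the twisted alternative because $\alpha$ (resp.\ $D$) has odd valuation at the primes above $2$ (resp.\ $p,q$). You also make explicit why $\alpha$ has valuation $1$ there: $D\equiv 1 \bmod 8$, so $\fk P$ splits in $K_1/\mathbf Q_1$. The paper leaves this step implicit.
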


\begin{proof}
Since $K_2=K_1(\sqrt \alpha)$, the quadratic square criterion gives, for $u \in K_1^\times$,
$$
u \in K_2^{\times 2} \iff u \in K_1^{\times 2} \text{ or } u \in \alpha K_1^{\times 2}.
$$
If $u \in E(K_1)$, the second possibility is impossible: $\alpha$ has odd valuation at the primes of $K_1$ above $2$, whereas $u$ is a unit. Hence
$$
E(K_1) \cap K_2^{\times 2}=E(K_1)^2.
$$

Similarly, $K_2=\mathbf Q_2(\sqrt D)$. Thus, for $u \in \mathbf Q_2^\times$,
$$
u \in K_2^{\times 2} \iff u \in \mathbf Q_2^{\times 2} \text{ or } u \in D\mathbf Q_2^{\times 2}.
$$
If $u \in E(\mathbf Q_2)$, the second possibility is impossible because $D=pq$ has odd valuation at primes above $p$ and $q$. Therefore,
$$
E(\mathbf Q_2) \cap K_2^{\times 2}=E(\mathbf Q_2)^2.
$$
\end{proof}

\begin{lemma}\label{lem:no-new-Q2-mixed-squares}
Under the standing assumptions of this subsection, one has
$$
E(K_1)E(\mathbf Q_2)\cap K_2^{\times 2}
=
E(K_1)^2E(\mathbf Q_2)^2.
$$
If, moreover,
$$
E(F_1)\cap K_2^{\times 2}=E(F_1)^2,
$$
then
$$
E(F_1)E(\mathbf Q_2)\cap K_2^{\times 2}
=
E(F_1)^2E(\mathbf Q_2)^2.
$$
\end{lemma}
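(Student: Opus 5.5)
We will show the nontrivial inclusion: if $x=uw$ with $u\in E(K_1)$, $w\in E(\mathbf Q_2)$ and $x\in K_2^{\times 2}$, then $x\in E(K_1)^2E(\mathbf Q_2)^2$. The approach is to take relative norms down the three quadratic subextensions of $K_2/\mathbf Q_1$ and reduce to Lemma~\ref{lem:base-units-square-classes} and Lemma~\ref{lem:no-new-single-field-squares}.

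\textbf{Step 1: norm to $K_1$.} Write $x=y^2$ with $y\in K_2^\times$ (in fact $y\in E(K_2)$). Since $w\in\mathbf Q_2$ and $\Gal(K_2/K_1)$ restricts isomorphically to $\Gal(\mathbf Q_2/\mathbf Q_1)$, we get
$$N_{K_2/K_1}(x)=u^2\,N_{\mathbf Q_2/\mathbf Q_1}(w).$$
The left side is $N_{K_2/K_1}(y)^2\in K_1^{\times 2}$. Hence $N_{\mathbf Q_2/\mathbf Q_1}(w)\in E(\mathbf Q_1)\cap K_1^{\times 2}$, and by Lemma~\ref{lem:base-units-square-classes} this gives $N_{\mathbf Q_2/\mathbf Q_1}(w)\in E(\mathbf Q_1)^2$. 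Writing $w=\pm w_0^{a}w_1^{b}\epsilon_2^{c}$ via Lemma~\ref{lem:unit-normalizations-K1-Q2}, we have $N(w)=(-1)^{a}\epsilon_2^{\,b+2c}$. Being a square forces $a\equiv b\equiv 0 \pmod 2$. So modulo $E(\mathbf Q_2)^2$ we may take $w\in\{1,-1,\epsilon_2,-\epsilon_2\}$, i.e.\ $w\in E(\mathbf Q_1)$.

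\textbf{Step 2: absorb $w$ into $u$.} Now $x=uw$ with $uw\in E(K_1)$ and $x\in K_2^{\times 2}$. Lemma~\ref{lem:no-new-single-field-squares} gives $uw\in E(K_1)^2$. Therefore the original $x$ lies in $E(K_1)^2E(\mathbf Q_2)^2$, which proves the first claim.

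\textbf{Second claim.} The argument is symmetric, with $F_1$ in place of $K_1$, since $K_2=F_1\mathbf Q_2$ and $\Gal(K_2/F_1)$ also restricts isomorphically onto $\Gal(\mathbf Q_2/\mathbf Q_1)$. For $x=vw$ with $v\in E(F_1)$, we have $N_{K_2/F_1}(x)=v^2N_{\mathbf Q_2/\mathbf Q_1}(w)\in F_1^{\times2}$. Lemma~\ref{lem:base-units-square-classes} for $L=F_1$ again reduces $w$ to $E(\mathbf Q_1)$ modulo $E(\mathbf Q_2)^2$. Then $vw\in E(F_1)\cap K_2^{\times 2}$, and the extra hypothesis $E(F_1)\cap K_2^{\times2}=E(F_1)^2$ finishes the argument. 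This hypothesis is needed because the square-class lemma for $F_1$ is not available a priori: unlike $K_1$, the field $F_1=\mathbf Q_1(\sqrt{\alpha D})$ admits the possibility $u\in \alpha F_1^{\times 2}$ or $D F_1^{\times2}$ without a valuation obstruction.

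\textbf{Main obstacle.} The one delicate point is the parity bookkeeping in Step 1. It requires $\{w_0,w_1,\epsilon_2\}$ to be a genuine fundamental system with the stated norms, so that the norm map $E(\mathbf Q_2)/E(\mathbf Q_2)^2\to E(\mathbf Q_1)/E(\mathbf Q_1)^2$ has kernel exactly the classes of $\pm1$ and $\pm\epsilon_2$. This should be checked explicitly, and it is where an error could creep in.
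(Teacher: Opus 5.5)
Your proposal is correct and follows essentially the same route as the paper: reduce $w$ modulo squares to $w_0^a w_1^b$ times an element of $E(\mathbf Q_1)$, take the norm from $K_2$ to $K_1$ (resp.\ $F_1$) to force $a\equiv b\equiv 0$ via the square-class lemma for $E(\mathbf Q_1)$, and then finish with the single-field lemma for $K_1$ (resp.\ the extra hypothesis on $E(F_1)$). The point you flag as delicate is exactly what the paper's unit-normalization lemma supplies: $E(\mathbf Q_2)=\langle -1,w_0,w_1,\epsilon_2\rangle$ with $N_{\mathbf Q_2/\mathbf Q_1}(w_0)=-1$ and $N_{\mathbf Q_2/\mathbf Q_1}(w_1)=\epsilon_2$.
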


\begin{proof}
We prove the first equality. Let $x \in E(K_1)E(\mathbf Q_2) \cap K_2^{\times 2}$. Since $-1$ and $\epsilon_2$ lie in $E(K_1)$, we may write, modulo $E(K_1)^2E(\mathbf Q_2)^2$,
$$
x \equiv u w_0^a w_1^b, \quad u \in E(K_1), \quad a,b \in \mathbf F_2.
$$
Taking norms from $K_2$ to $K_1$ gives
$$
N_{K_2/K_1}(x)=u^2(-1)^a\epsilon_2^b.
$$
This is a square in $K_1$. By Lemma~\ref{lem:base-units-square-classes}, we get $a=b=0$. Hence $x \equiv u$, and then Lemma~\ref{lem:no-new-single-field-squares} gives that $x$ is trivial modulo $E(K_1)^2E(\mathbf Q_2)^2$.

The second equality is identical. If $x \in E(F_1)E(\mathbf Q_2) \cap K_2^{\times 2}$, write $x \equiv v w_0^a w_1^b$ with $v \in E(F_1)$ and take norms from $K_2$ to $F_1$.
\end{proof}

\begin{lemma}\label{lem:K1F1-one-class}
Assume
$$
E(F_1) \cap K_2^{\times 2}=E(F_1)^2.
$$
Set
$$
S_{K_1F_1}=\frac{E(K_1)E(F_1) \cap K_2^{\times 2}}{E(K_1)^2E(F_1)^2}.
$$
Then $\dim_{\mathbf F_2} S_{K_1F_1} \le 1$.
Moreover, if $S_{K_1F_1}$ is nontrivial, its unique nontrivial class has a representative
$$
A= u_1 v \epsilon_2^e
$$
with $e \in \mathbf F_2$, $v \in E(F_1)$, and $N_{F_1/\mathbf Q_1}(v)=1$.
\end{lemma}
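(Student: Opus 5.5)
We argue with norm maps, exactly as in Lemma~\ref{lem:no-new-Q2-mixed-squares}. Take $x\in E(K_1)E(F_1)\cap K_2^{\times 2}$. Since $-1,\epsilon_2$ lie in both $E(K_1)$ and $E(F_1)$, Lemma~\ref{lem:unit-normalizations-K1-Q2} lets us write, modulo $E(K_1)^2E(F_1)^2$,
$$
x\equiv u_0^{a}u_1^{b}\,v\,\epsilon_2^{e}, \qquad a,b,e\in\F_2,\quad v\in E(F_1).
$$
The point is to pin down $a$ and $b$ and the norm of $v$.

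\textbf{Step 1: norm to $F_1$.} The nontrivial element of $\Gal(K_2/F_1)$ restricts to the nontrivial automorphism of $K_1/\mathbf Q_1$. Hence
$$
N_{K_2/F_1}(x)=N_{K_1/\mathbf Q_1}(u_0^au_1^b)\,v^2\epsilon_2^{2e}=(-1)^a v^2\epsilon_2^{2e}.
$$
This must be a square in $F_1$. By Lemma~\ref{lem:base-units-square-classes}, $-1$ is not a square, so $a=0$.

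\textbf{Step 2: norm to $K_1$.} In the same way,
$$
N_{K_2/K_1}(x)=u_1^{2b}\epsilon_2^{2e}N_{F_1/\mathbf Q_1}(v)
$$
is a square in $K_1$. Since $N_{F_1/\mathbf Q_1}(v)\in E(\mathbf Q_1)$, Lemma~\ref{lem:base-units-square-classes} puts it in $E(\mathbf Q_1)^2$, say $N_{F_1/\mathbf Q_1}(v)=c^2$ with $c\in\{\pm1,\pm\epsilon_2^k\}$. Replacing $v$ by $v\,\epsilon_2^{-k}$ absorbs the power of $\epsilon_2$ into $\epsilon_2^e$ up to squares. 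The extra factor is only an $\epsilon_2$-power, so the argument is not affected. We may therefore assume $N_{F_1/\mathbf Q_1}(v)=1$.

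The main obstacle is the sign ambiguity: after this normalisation $N(v)=c^2$ could still leave $v$ with norm $1$ only up to the adjustment just made. If it turns out to be needed, we would instead normalise using $N(v)\in\{1,\epsilon_2^{2k}\}$ directly.

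\textbf{Step 3: eliminate $b=0$ and bound the dimension.} Suppose $b=0$. Then $x\equiv v\epsilon_2^e\in E(F_1)$, and the hypothesis $E(F_1)\cap K_2^{\times2}=E(F_1)^2$ makes $x$ trivial. So every nontrivial class has $b=1$, i.e. it is represented by $u_1v\epsilon_2^e$ with $N_{F_1/\mathbf Q_1}(v)=1$.

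Now take two nontrivial classes $A=u_1v\epsilon_2^e$ and $A'=u_1v'\epsilon_2^{e'}$. Their product (equivalently, quotient) is
$$
AA'\equiv u_1^2\,vv'\epsilon_2^{e+e'}\equiv vv'\epsilon_2^{e+e'}\in E(F_1),
$$
and it lies in $K_2^{\times2}$. By the hypothesis it is trivial, so $A\equiv A'$. Hence $S_{K_1F_1}$ has at most one nontrivial class, which gives $\dim_{\F_2}S_{K_1F_1}\le1$ together with the stated form of the representative.
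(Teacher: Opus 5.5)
Your proposal is correct and is essentially the paper's argument. The only variation is that you eliminate the $u_0$-exponent with $N_{K_2/F_1}$, which gives $(-1)^a v^2\epsilon_2^{2e}$ with no prior normalization; the paper instead first normalizes $N_{F_1/\mathbf Q_1}(f)=1$ via $N_{K_2/K_1}$ and then uses $N_{K_2/\mathbf Q_2}$. The ``sign ambiguity'' you raise in Step 2 does not arise: for $c=\pm\epsilon_2^k$ one has $c^2=\epsilon_2^{2k}$, so $N_{F_1/\mathbf Q_1}(v\epsilon_2^{-k})=1$ exactly.
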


\begin{proof}
Let $x \in E(K_1)E(F_1) \cap K_2^{\times 2}$. Write
$$
x=kf, \quad k \in E(K_1), \quad f \in E(F_1).
$$
Modulo $E(K_1)^2$ we may write
$$
k=(-1)^r u_0^a u_1^b\epsilon_2^c, \quad r,a,b,c \in \mathbf F_2.
$$

Since $-1 \in E(F_1)$, replacing $f$ by $(-1)^r f$ allows us to absorb the factor $(-1)^r$ into $f$. Thus, without changing $x$, we may write
\[
k=u_0^a u_1^b\epsilon_2^c.
\]

Taking norms from $K_2$ to $K_1$ gives
$$
N_{K_2/K_1}(x)=k^2N_{F_1/\mathbf Q_1}(f).
$$
Since $x$ is a square in $K_2$, this norm is a square in $K_1$. Thus $N_{F_1/\mathbf Q_1}(f) \in E(\mathbf Q_1) \cap K_1^{\times 2}$. By Lemma~\ref{lem:base-units-square-classes},
$$
N_{F_1/\mathbf Q_1}(f) \in E(\mathbf Q_1)^2.
$$
Choose $\beta \in E(\mathbf Q_1)$  with $N_{F_1/\mathbf Q_1}(f)=\beta^2$. Replacing $f$ by $f/\beta$ and $k$ by $k\beta$ does not change $x$, and lets us assume
$$
N_{F_1/\mathbf Q_1}(f)=1.
$$
Now take norms from $K_2$ to $\mathbf Q_2$. We get
$$
N_{K_2/\mathbf Q_2}(x)=N_{K_1/\mathbf Q_1}(k)N_{F_1/\mathbf Q_1}(f) \cdot \text{square}=(-1)^a \cdot \text{square}.
$$
This is a square in $\mathbf Q_2$, so Lemma~\ref{lem:base-units-square-classes} gives $a=0$.
Therefore every class in $S_{K_1F_1}$ has a representative
$$
u_1^b f\epsilon_2^c
$$
with $N_{F_1/\mathbf Q_1}(f)=1$.

If $b=0$, then the representative lies in $E(F_1) \cap K_2^{\times 2}$, hence it is trivial by assumption. Thus, any nontrivial class must have $b=1$.

Finally, suppose $x_1$ and $x_2$ are two nontrivial classes. By the previous paragraph, they have representatives
$$
x_i=u_1f_i\epsilon_2^{c_i}, \quad N_{F_1/\mathbf Q_1}(f_i)=1.
$$
Then,
$$
x_1x_2=f_1f_2\epsilon_2^{c_1+c_2} \in E(F_1) \cap K_2^{\times 2}.
$$
By the assumption on $E(F_1)$, this product is trivial modulo $E(F_1)^2$, hence $x_1=x_2$ in $S_{K_1F_1}$. Therefore, $\dim_{\mathbf F_2} S_{K_1F_1} \le 1$.
\end{proof}

\begin{lemma}\label{lem:outside-normal-form}
Assume
$$
E(F_1) \cap K_2^{\times 2}=E(F_1)^2.
$$
Let $T$ be the set of classes in $S$ which do not lie in the image of $S_{K_1F_1}$. If $B \in T$, then $B$ has a representative of the form
$$
B= u_0 u_1^b f w_0\epsilon_2^d
$$
with $s,b,d \in \mathbf F_2$, $f \in E(F_1)$, and
$$
N_{F_1/\mathbf Q_1}(f)=-1.
$$
In particular, whenever $T \ne \varnothing$, one has
$$
-1 \in N_{F_1/Q_1}(E(F_1)).
$$
\end{lemma}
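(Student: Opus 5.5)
Let $x\in H\cap K_2^{\times 2}$ represent a class $B\in T$. Write $x=k\,f\,w$ with $k\in E(K_1)$, $f\in E(F_1)$, $w\in E(\mathbf Q_2)$. By Lemma~\ref{lem:unit-normalizations-K1-Q2}, modulo $H^2$ we may take $k=u_0^{a}u_1^{b}\epsilon_2^{c}$ and $w=w_0^{s}w_1^{t}$. Any factors $-1$ and $\epsilon_2$ coming from $E(\mathbf Q_2)$ or $E(K_1)$ can be moved into $f$ or $k$, since $-1,\epsilon_2\in E(F_1)$. The proof then takes norms along the three quadratic subextensions of $K_2/\mathbf Q_1$. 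Each such norm of $x$ is a square in the corresponding subfield, hence lies in that subfield's square class group. The base units are controlled by Lemma~\ref{lem:base-units-square-classes}.

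\textbf{Step 1: the norm to $K_1$.} Take norms from $K_2$ to $K_1$:
$$N_{K_2/K_1}(x)=k^2\,N_{F_1/\mathbf Q_1}(f)\,N_{\mathbf Q_2/\mathbf Q_1}(w)=k^2\,N_{F_1/\mathbf Q_1}(f)\,(-1)^s\epsilon_2^{t}.$$
This is a square in $K_1$. Lemma~\ref{lem:base-units-square-classes} then forces $N_{F_1/\mathbf Q_1}(f)(-1)^s\epsilon_2^t\in E(\mathbf Q_1)^2$. Now $N_{F_1/\mathbf Q_1}(f)\in E(\mathbf Q_1)$ has some class $(-1)^{s'}\epsilon_2^{t'}$ modulo squares, so $s'=s$ and $t'=t$. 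Rescaling $f$ by an element $\beta\in E(\mathbf Q_1)$ and compensating in $k$, exactly as in Lemma~\ref{lem:K1F1-one-class}, we may assume $N_{F_1/\mathbf Q_1}(f)=(-1)^s\epsilon_2^t$ exactly.

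\textbf{Step 2: the norm to $\mathbf Q_2$.} Take norms from $K_2$ to $\mathbf Q_2$:
$$N_{K_2/\mathbf Q_2}(x)=N_{K_1/\mathbf Q_1}(k)\,N_{F_1/\mathbf Q_1}(f)\,w^2=(-1)^a(-1)^s\epsilon_2^{t}\cdot\text{square}.$$
Here we use $N_{K_1/\mathbf Q_1}(u_1)=1$, $N_{K_1/\mathbf Q_1}(u_0)=-1$ and $N_{K_1/\mathbf Q_1}(\epsilon_2)=\epsilon_2^2$. Lemma~\ref{lem:base-units-square-classes} gives $t=0$ and $a=s$.

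\textbf{Step 3: excluding $s=0$.} If $s=0$, then $w$ is trivial modulo $E(\mathbf Q_2)^2$ and $a=0$. Hence $x\in E(K_1)E(F_1)$ up to $H^2$, so its class lies in the image of $S_{K_1F_1}$. This contradicts $B\in T$. Therefore $s=a=1$, and the representative has the stated shape
$$B=u_0u_1^bfw_0\epsilon_2^d,\qquad N_{F_1/\mathbf Q_1}(f)=-1.$$
The last assertion is then immediate: if $T\neq\varnothing$, this $f$ is a unit of $F_1$ with norm $-1$.

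\textbf{Expected obstacle.} The delicate point is the bookkeeping when absorbing $\beta$ and the factors $\pm1,\epsilon_2$. The parity of $t$ must be settled before $\beta$ is chosen, and the replacement must not reintroduce factors of $w_1$. One also has to check that the hypothesis $E(F_1)\cap K_2^{\times2}=E(F_1)^2$ is not needed for this step; it is used only to identify the image of $S_{K_1F_1}$, through Lemma~\ref{lem:K1F1-one-class}.
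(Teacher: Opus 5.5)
Your proposal is correct and follows essentially the paper's own argument. You reduce $x=kfw$ modulo $H^2$ and take norms from $K_2$ to $K_1$ and to $\mathbf Q_2$. Lemma~\ref{lem:base-units-square-classes} then forces $t=0$ and $a=s$, and $s=0$ is ruled out because the class would come from $S_{K_1F_1}$.

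The only difference is cosmetic. You normalize $N_{F_1/\mathbf Q_1}(f)$ right after the first norm, and this works for either parity of $t$, so the obstacle you anticipated does not arise. The paper instead divides the two norm relations first and rescales $f$ at the end. You are also right that the hypothesis $E(F_1)\cap K_2^{\times 2}=E(F_1)^2$ plays no role in this step.
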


\begin{proof}
Let $B \in T$ and choose a representative $x \in H \cap K_2^{\times 2}$. Write
$$
x=kfq, \quad k \in E(K_1), \quad f \in E(F_1), \quad q \in E(\mathbf Q_2).
$$
Since $-1$ and $\epsilon_2$ belong to both $E(K_1)$ and $E(F_1)$, and since $E(\mathbf Q_2)=\langle -1,w_0,w_1,\epsilon_2\rangle$, we may write, modulo $H^2$,
$$
k=(-1)^r u_0^a u_1^b\epsilon_2^c, \quad q=w_0^m w_1^n,
$$
with all exponents in $\mathbf F_2$.

Taking norms from $K_2$ to $K_1$ gives
$$
N_{F_1/\mathbf Q_1}(f)(-1)^m\epsilon_2^n \in E(\mathbf Q_1)^2.
$$
Taking norms from $K_2$ to $\mathbf Q_2$ gives
$$
(-1)^aN_{F_1/\mathbf Q_1}(f) \in E(\mathbf Q_1)^2.
$$
Dividing these two relations, we obtain
$$
(-1)^{a+m}\epsilon_2^n \in E(\mathbf Q_1)^2.
$$
By Lemma~\ref{lem:base-units-square-classes}, this forces
$$
n=0, \quad a=m.
$$
If $m=0$, then $q$ is trivial modulo $E(\mathbf Q_2)^2E(\mathbf Q_1)$, so the class of $x$ lies in the image of $S_{K_1F_1}$, contrary to $B \in T$. Hence $m=1$, and therefore $a=1$.

The first norm relation now gives
$$
N_{F_1/\mathbf Q_1}(f) \in -E(\mathbf Q_1)^2.
$$
Choose $\beta \in E(\mathbf Q_1)$ such that
$$
N_{F_1/\mathbf Q_1}(f)=-\beta^2.
$$
Replacing $f$ by $f/\beta$ and absorbing $\beta$ into the $(-1)^s\epsilon_2^d$ factor gives the desired representative
$$
B=(-1)^s u_0u_1^b f w_0\epsilon_2^d
$$
with $N_{F_1/\mathbf Q_1}(f)=-1$, where again you can omit $(-1)^s$
\end{proof}

\begin{lemma}[A norm obstruction]\label{lem:epsilonD-not-norm-K2K1}
Assume
$$
\kron{2}{p}_4\kron{2}{q}_4\kron{D}{2}_4=-1.
$$
Then
$$
\epsilon_D \notin N_{K_2/K_1}(K_2^\times).
$$
\end{lemma}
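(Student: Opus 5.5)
The plan is to reduce the statement to Lemma~\ref{lem: ed is norm iff} through the transitivity of norms along $K\subset K_1\subset K_2$. Suppose, for contradiction, that $\epsilon_D=N_{K_2/K_1}(y)$ with $y\in K_2^\times$. Since $\epsilon_D\in K$, we have $N_{K_1/K}(\epsilon_D)=\epsilon_D^2$. This alone gives nothing, because $\epsilon_D^2$ is always a norm. So the naive "push down" argument fails, and the proof must instead be carried out locally at the dyadic primes, as in the proof of Lemma~\ref{lem: ed is norm iff} and Lemma~\ref{lem: property P_n}.

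I would therefore argue via Hasse's norm theorem for the cyclic (quadratic) extension $K_2/K_1=K_1(\sqrt{\alpha})/K_1$. The claim is that $\epsilon_D\in N_{K_2/K_1}(K_2^\times)$ if and only if $(\epsilon_D,\alpha)_{w}=1$ for every place $w$ of $K_1$. The places to check are as follows.
\begin{itemize}
\item At real places the symbol is trivial, because $\alpha=2+\sqrt2$ is totally positive.
\item At finite places not above $2$, $\alpha$ is a unit and $K_2/K_1$ is unramified there, so the symbol of a unit is trivial.
\item Hence only the primes of $K_1$ above $2$ matter. Since $2$ splits in $K$ and ramifies in $\mathbf Q_1$, there are two such primes $\mathfrak P_1,\mathfrak P_2$, each with completion $(K_1)_{\mathfrak P_i}\simeq\Q_2(\sqrt2)$.
\end{itemize}
At $\mathfrak P_i$ the element $\epsilon_D$ becomes $u_i\in\Z_2^\times\subset\Q_2(\sqrt2)$, with $u_i$ as in Lemma~\ref{lem: property P_n}.

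The key local step is to use the norm-compatibility of the Hilbert symbol for $u_i\in\Q_2$:
$$(u_i,\alpha)_{\Q_2(\sqrt2)}=\bigl(u_i,N_{\Q_2(\sqrt2)/\Q_2}(\alpha)\bigr)_{\Q_2}=(u_i,2)_{\Q_2},$$
since $N(2+\sqrt2)=2$. By Lemma~\ref{lem: ed is norm iff} (and its proof), the hypothesis $\kron{2}{p}_4\kron{2}{q}_4\kron{D}{2}_4=-1$ gives $(u_1,2)_2=(\epsilon_D,2)_{\mathfrak P_1}=-1$. So $(\epsilon_D,\alpha)_{\mathfrak P_1}=-1$, and therefore $\epsilon_D$ is not a local norm at $\mathfrak P_1$, hence not a global norm. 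As a consistency check, $(u_2,2)_2=(u_1,2)_2$ as shown in Lemma~\ref{lem: property P_n}, so the product formula over the two dyadic places holds, as it must.

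The main obstacle is justifying the norm-compatibility identity $(a,b)_{L}=(a,N_{L/F}b)_F$ for $a\in F$ and $b\in L$ with the right orientation. It is the standard property of the norm residue symbol under restriction: for $a\in F^\times$, $\mathrm{Res}$ corresponds to composing with the norm on the other argument. I would cite it, or verify it directly. For the direct check, note that $u_i\equiv 3,5\pmod 8$ by Lemma~\ref{lem: property P_n}. Then check that such a $u_i$ is not a norm from $\Q_2(\sqrt2,\sqrt{2+\sqrt2})=\Q_2(\zeta_{16})^+$ to $\Q_2(\sqrt2)$, using local class field theory: the norm group of $\Q_2(\zeta_{16})^+/\Q_2$ meets $\Z_2^\times$ in a subgroup that excludes the classes $3,5\bmod 8$.
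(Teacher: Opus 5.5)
Your proof is correct and is essentially the paper's argument: at a dyadic prime $\mathfrak P$ of $K_1$ above $\mathfrak p$ of $K$, the projection formula gives $(\epsilon_D,\alpha)_{\mathfrak P}=(\epsilon_D,N(\alpha))_{\mathfrak p}=(\epsilon_D,2)_{\mathfrak p}=-1$ by Lemma~\ref{lem: ed is norm iff}, so $\epsilon_D$ is not even a local norm. One small correction to your optional direct check: if $u_i=N(y)$ with $y\in\Q_2(\zeta_{16})^+$, then $N_{\Q_2(\zeta_{16})^+/\Q_2}(y)=u_i^2\equiv 9 \pmod{16}$, so what you must exclude from the norm group $2^{\Z}\times\pm(1+16\Z_2)$ is $u_i^2$, not $u_i$. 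The same observation shows that the push-down argument you dismissed does work if you push $\epsilon_D$ down along $K_2/K$, since $\epsilon_D^2$ lies in $N_{K_1/K}(K_1^\times)$ but not in $N_{K_2/K}(K_2^\times)$.
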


\begin{proof}
By Lemma~\ref{lem: ed is norm iff}, the quartic-symbol condition implies
$$
\epsilon_D \notin N_{K_1/K}(K_1^\times).
$$
Equivalently, for some dyadic prime $\fk p \mid 2$ of $K$,
$$
(\epsilon_D,2)_{\fk p}=-1.
$$
Let $\fk P$ be a prime of $K_1$ above $\fk p$. Locally,
$$
K_{1,\fk P}=K_{\fk p}(\sqrt 2), \quad K_{2,\fk P}=K_{1,\fk P}(\sqrt \alpha).
$$
Moreover
$$
N_{K_{1,\fk P}/K_{\fk p}}(\alpha)=(2+\sqrt 2)(2-\sqrt 2)=2.
$$
By the projection formula for Hilbert symbols,
$$
(\epsilon_D,\alpha)_{\fk P}=(\epsilon_D,N_{K_{1,\fk P}/K_{\fk p}}(\alpha))_{\fk p}=(\epsilon_D,2)_{\fk p}=-1.
$$
Thus $\epsilon_D$ is not a local norm from $K_{2,\fk P}$ to $K_{1,\fk P}$, and therefore it is not a global norm from $K_2$ to $K_1$.
\end{proof}

\begin{lemma}\label{lem:T-empty-if-SK1F1-nontrivial}
Assume
$$
E(F_1) \cap K_2^{\times 2}=E(F_1)^2
$$
and
$$
\kron{2}{p}_4\kron{2}{q}_4\kron{D}{2}_4=-1.
$$
If $S_{K_1F_1}$ is nontrivial, then $T=\varnothing$.
\end{lemma}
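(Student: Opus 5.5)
We argue by contradiction. Suppose $S_{K_1F_1}$ is nontrivial and $T\neq\varnothing$. By Lemma~\ref{lem:K1F1-one-class}, the nontrivial class of $S_{K_1F_1}$ has a representative
$$
A=u_1 v\epsilon_2^e,\qquad v\in E(F_1),\quad N_{F_1/\mathbf Q_1}(v)=1,
$$
and $A$ is a square in $K_2$. By Lemma~\ref{lem:outside-normal-form}, any $B\in T$ has a representative
$$
B=u_0u_1^b f w_0\epsilon_2^d,\qquad N_{F_1/\mathbf Q_1}(f)=-1,
$$
and $B$ is also a square in $K_2$. Multiplying $B$ by $A$ if necessary (the product is still a square and still lies outside the image of $S_{K_1F_1}$), we may assume $b=0$. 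So
$$
x=\epsilon_D\, f\, w_0\,\epsilon_2^{d}=y^2,\qquad y\in K_2^\times .
$$
We then derive a contradiction with Lemma~\ref{lem:epsilonD-not-norm-K2K1} by computing a relative norm.

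\textbf{Step 1: apply $N_{K_2/K_1}$.} Writing $K_2=K_1(\sqrt\alpha)$, the restriction of $\Gal(K_2/K_1)$ to $F_1$ and to $\mathbf Q_2$ is the nontrivial automorphism over $\mathbf Q_1$. Hence
$$
N_{K_2/K_1}(x)=\epsilon_D^2\,N_{F_1/\mathbf Q_1}(f)\,N_{\mathbf Q_2/\mathbf Q_1}(w_0)\,\epsilon_2^{2d}=\epsilon_D^2\epsilon_2^{2d}.
$$
This alone is consistent, so we need a finer invariant.

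\textbf{Step 2: apply $N_{K_2/F_1}$ instead.} Here $K_2=F_1(\sqrt D)$, and the relevant automorphism acts as conjugation on $K_1/\mathbf Q_1$ and on $\mathbf Q_2/\mathbf Q_1$. We get
$$
N_{K_2/F_1}(x)=N_{K_1/\mathbf Q_1}(\epsilon_D)\,f^2\,N(w_0)\,\epsilon_2^{2d}=(-1)(-1)f^2\epsilon_2^{2d},
$$
which is again a square. So norms alone will not suffice. The idea is instead to extract the norm obstruction through $y$.

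\textbf{Step 3: rewrite $x$ so that $\epsilon_D$ appears as a norm.} Since $N_{F_1/\mathbf Q_1}(f)=-1$ and $N_{\mathbf Q_2/\mathbf Q_1}(w_0)=-1$, the element $fw_0\in K_2$ satisfies $N_{K_2/K_1}(fw_0)=1$. By Hilbert 90 for $K_2/K_1$, we can write $fw_0=z^{1-\sigma}$. Then
$$
\epsilon_D\,\epsilon_2^d=y^2 z^{\sigma-1}=y^2z^{\sigma+1}z^{-2}=N_{K_2/K_1}(z)\,(y/z)^2 .
$$
The term $(y/z)^2$ lies in $K_2^2$, and we want to land it in $N_{K_2/K_1}(K_2^\times)$. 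The cleaner route is local: at a dyadic prime $\mathfrak P$ of $K_1$, compute the Hilbert symbol $(x,\alpha)_{\mathfrak P}$ after descending to $K_1$.

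Concretely, apply $N_{K_2/K_1}$ to $y$:
$$
N_{K_2/K_1}(y)^2=N_{K_2/K_1}(x)=\epsilon_D^2\epsilon_2^{2d}.
$$
Hence $N_{K_2/K_1}(y)=\pm\epsilon_D\epsilon_2^{d}$. Since $-1=N_{K_2/K_1}(w_0)$ and $\epsilon_2=N_{K_2/K_1}(w_1)$ are norms (restricted from $\mathbf Q_2/\mathbf Q_1$), it follows that $\epsilon_D$ is a norm from $K_2$ to $K_1$. This contradicts Lemma~\ref{lem:epsilonD-not-norm-K2K1}.

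The main point to check is the sign analysis in $N_{K_2/K_1}(y)=\pm\epsilon_D\epsilon_2^d$, together with the fact that $-1$ and $\epsilon_2$ are indeed norms from $K_2$ to $K_1$. This holds because $w_0,w_1\in\mathbf Q_2\subset K_2$ and $N_{K_2/K_1}$ restricts to $N_{\mathbf Q_2/\mathbf Q_1}$ on $\mathbf Q_2$. One must also verify that the reduction $b=0$ is legitimate, i.e.\ that the product with $A$ stays outside the image of $S_{K_1F_1}$. This holds because that image is a subgroup of $S$ and $B\notin$ it.
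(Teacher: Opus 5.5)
Your argument is correct and is essentially the paper's proof. The paper forms $C=BA^b u_1^{-2b}$ directly rather than first normalizing to $b=0$; this is the same reduction, with $fv$ still of norm $-1$. It then concludes exactly as you do, from $N_{K_2/K_1}(y)=\pm\epsilon_D\epsilon_2^{d}$ together with $-1=N_{K_2/K_1}(w_0)$ and $\epsilon_2=N_{K_2/K_1}(w_1)$.

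Step~2 and the Hilbert~90 part of Step~3 are unused detours and should be deleted. The remark in Step~1 that a ``finer invariant'' is needed is also misleading. Step~1 combined with $N_{K_2/K_1}(x)=N_{K_2/K_1}(y)^2$ already is the finishing argument.
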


\begin{proof}
Assume $S_{K_1F_1}$ is nontrivial. By Lemma~\ref{lem:K1F1-one-class}, its nontrivial class has a representative
$$
A= u_1v\epsilon_2^e
$$
with $e \in \mathbf F_2$, $v \in E(F_1)$, and $N_{F_1/Q_1}(v)=1$.

Suppose, for contradiction, that $T \ne \varnothing$. By Lemma~\ref{lem:outside-normal-form}, we can choose a representative
$$
B= u_0u_1^b f w_0\epsilon_2^d
$$
with $b,d \in \mathbf F_2$, $f \in E(F_1)$, and $N_{F_1/Q_1}(f)=-1$.

Since $A$ and $B$ are squares in $K_2$, the product $BA^b$ is also a square. Dividing by the square $u_1^{2b}$, we get
$$
C=u_0f v^b w_0\epsilon_2^{d+be} \in K_2^{\times 2}.
$$
Write $C=y^2$ with $y \in K_2^\times$.

Now take norms from $K_2$ to $K_1$. Since $u_0 \in K_1$, $f,v \in F_1$, and $w_0 \in Q_2$, we have
$$
N_{K_2/K_1}(C)=u_0^2N_{F_1/\mathbf Q_1}(f)N_{F_1/\mathbf Q_1}(v)^bN_{\mathbf Q_2/\mathbf Q_1}(w_0)\epsilon_2^{2(d+be)}.
$$
Using $N_{F_1/\mathbf Q_1}(f)=-1$, $N_{F_1/\mathbf Q_1}(v)=1$, and $N_{\mathbf Q_2/\mathbf Q_1}(w_0)=-1$, this becomes
$$
N_{K_2/K_1}(C)=u_0^2\epsilon_2^{2(d+be)}=(u_0\epsilon_2^{d+be})^2.
$$
Since $C=y^2$, we get
$$
N_{K_2/K_1}(y)^2=(u_0\epsilon_2^{d+be})^2,
$$
so
$$
N_{K_2/K_1}(y)=\pm u_0\epsilon_2^{d+be}.
$$
But $-1$ and $\epsilon_2$ are norms from $K_2$ to $K_1$ because
$$
-1=N_{K_2/K_1}(w_0), \quad \epsilon_2=N_{K_2/K_1}(w_1).
$$
Therefore $u_0=\epsilon_D$ is a norm from $K_2$ to $K_1$, contradicting Lemma~\ref{lem:epsilonD-not-norm-K2K1}. Hence $T=\varnothing$.
\end{proof}

\begin{corollary}\label{cor:qK2-le2-from-EF1}
Assume
$$
E(F_1) \cap K_2^{\times 2}=E(F_1)^2
$$
and
$$
\kron{2}{p}_4\kron{2}{q}_4\kron{D}{2}_4=-1.
$$
Then
$$
q(K_2) \le 2.
$$
\end{corollary}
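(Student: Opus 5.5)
By Lemma~\ref{lem:wada-description}, we have $q(K_2)=|S|$, so the task is to show $|S|\le 2$. We will do this by decomposing $S$ into the image of $S_{K_1F_1}$ and its complement $T$. The map $S_{K_1F_1}\to S$, induced by the inclusion $E(K_1)E(F_1)\subseteq H$, has some image $I\subseteq S$. By definition, $T=S\setminus I$. Lemma~\ref{lem:K1F1-one-class} gives $|S_{K_1F_1}|\le 2$, and hence $|I|\le 2$. We will argue according to whether $I$ is trivial.

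\emph{Case 1: $I$ is nontrivial.} Then $S_{K_1F_1}$ is nontrivial. Lemma~\ref{lem:T-empty-if-SK1F1-nontrivial}, which uses both hypotheses of the corollary, gives $T=\varnothing$. Thus $S=I$ and $|S|\le 2$.

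\emph{Case 2: $I$ is trivial.} Here we must show $|T|\le 1$, so that $|S|\le 1+1=2$. Take two elements $B_1,B_2\in T$. By Lemma~\ref{lem:outside-normal-form}, each has a representative $B_i=u_0u_1^{b_i}f_iw_0\epsilon_2^{d_i}$ with $N_{F_1/\mathbf Q_1}(f_i)=-1$. The product $B_1B_2$ is congruent modulo $H^2$ to $u_1^{b_1+b_2}f_1f_2\epsilon_2^{d_1+d_2}$ times the square $u_0^2w_0^2$. This element lies in $E(K_1)E(F_1)\cap K_2^{\times 2}$, so its class in $S$ lies in $I$, which is trivial. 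Hence $B_1=B_2$ in $S$ (the group $S$ is elementary $2$-abelian), and $|T|\le 1$.

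Combining the two cases gives $q(K_2)=|S|\le 2$. The steps that need care are only the bookkeeping ones. One must check that the classes of $S$ coming from $E(K_1)E(F_1)$ are exactly $I$. One must also check that multiplying the two normal forms lands back in $E(K_1)E(F_1)$ modulo squares: the $w_0$ factors cancel to $w_0^2\in H^2$, and $u_0^2\in H^2$. No new arithmetic input beyond the earlier lemmas is needed.
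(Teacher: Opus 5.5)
Your proposal is correct and follows the paper's proof essentially verbatim: in the nontrivial case you invoke Lemma~\ref{lem:T-empty-if-SK1F1-nontrivial} to get $S=I$, and in the trivial case you multiply two normal forms from Lemma~\ref{lem:outside-normal-form} to land in the image of $S_{K_1F_1}$, giving $|T|\le 1$. Splitting on whether the image $I$ is trivial, rather than on whether $S_{K_1F_1}$ itself is, is a harmless and slightly more careful variant, since the paper's statement ``$S=S_{K_1F_1}$'' tacitly identifies $S_{K_1F_1}$ with its image in $S$.
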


\begin{proof}
If $S_{K_1F_1}$ is nontrivial, then Lemma~\ref{lem:T-empty-if-SK1F1-nontrivial} gives $T=\varnothing$. Hence
$$
S=S_{K_1F_1}
$$
and Lemma~\ref{lem:K1F1-one-class} gives $|S| \le 2$.

Now suppose $S_{K_1F_1}$ is trivial. We show that $|T| \le 1$. Let $B_1,B_2 \in T$. By Lemma~\ref{lem:outside-normal-form}, we may write
$$
B_i=u_0u_1^{b_i}f_iw_0\epsilon_2^{d_i}, \quad N_{F_1/Q_1}(f_i)=-1.
$$
Then, in $S$,
$$
B_1B_2=u_1^{b_1+b_2}f_1f_2\epsilon_2^{d_1+d_2},
$$
because $u_0^2$ and $w_0^2$ are in $H^2$. Thus, $B_1B_2$ lies in the image of $S_{K_1F_1}$. Since $S_{K_1F_1}$ is trivial, $B_1B_2=1$ in $S$, and hence $B_1=B_2$. Therefore $|T| \le 1$.

In both cases $S=S_{K_1F_1} \cup T$ has at most two elements. By Lemma~\ref{lem:wada-description},
$$
q(K_2)=|S| \le 2.
$$
\end{proof}

\begin{lemma}\label{lem:F1-no-new-squares-criterion}
Let $\fk P_2=(\sqrt 2)$ be the dyadic prime of $\mathbf Q_1$, and let $\fk l$ be the unique prime of $F_1$ above $\fk P_2$. Then the following are equivalent:
\begin{enumerate}
\item $E(F_1) \cap K_2^{\times 2}=E(F_1)^2$.
\item $\fk l$ is not principal.
\item Neither $\sqrt 2$ nor $\alpha$ is a norm from $F_1^\times$ to $\mathbf Q_1^\times$.
\end{enumerate}
Moreover,
$$
\sqrt 2 \in N_{F_1/\mathbf Q_1}(F_1^\times) \iff \kron{2}{p}_4=\kron{2}{q}_4=1,
$$
and
$$
\alpha \in N_{F_1/\mathbf Q_1}(F_1^\times) \iff \kron{p}{2}_4=\kron{q}{2}_4=1.
$$
Consequently, since $q \equiv 9 \mod {16}$,
$$
E(F_1) \cap K_2^{\times 2}=E(F_1)^2 \iff \kron{2}{p}_4=-1 \quad \text{or} \quad \kron{2}{q}_4=-1.
$$
\end{lemma}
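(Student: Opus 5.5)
The plan is to prove the three equivalences in the cyclic order $(1)\Leftrightarrow(2)\Leftrightarrow(3)$, and then evaluate the two norm conditions in $(3)$ with Hilbert symbols, exactly as in the proof of Theorem~\ref{th: r_4 of F1}.

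\textbf{Step 1: $(1)\Leftrightarrow(2)$.} Since $K_2=F_1(\sqrt\alpha)$, the square criterion of Theorem~\ref{th: lemmermeyer biquadratic class number} applies: a unit $v\in E(F_1)$ lies in $K_2^{\times 2}$ if and only if $v\in F_1^{\times2}$ or $v\in\alpha F_1^{\times 2}$. In $\mathbf Q_1$ we have $\alpha=\sqrt2\,\epsilon_2$, so $(\alpha)=\fk P_2$. Because $\delta$ has odd valuation at $\fk P_2$, the prime $\fk P_2$ ramifies in $F_1$, so $\fk P_2\ot_{F_1}=\fk l^2$.
\begin{itemize}
\item If $v=\alpha x^2$ with $v$ a unit, then $(x)^2\fk l^2=(1)$, so $\fk l=(x^{-1})$ is principal.
\item Conversely, if $\fk l=(y)$, then $v:=y^2/\alpha$ is a unit lying in $\alpha F_1^{\times2}$. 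It is not a square in $F_1$: otherwise $\alpha\in F_1^{\times 2}$, which would force $\alpha\in\mathbf Q_1^{\times2}\cup\delta\mathbf Q_1^{\times2}$, and this is false since $D$ is not a square in $\mathbf Q_1$.
\end{itemize}
So $v$ is a genuinely new square in $K_2$, which gives the equivalence.

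\textbf{Step 2: $(2)\Leftrightarrow(3)$.} If $\fk l=(y)$, then $N_{F_1/\mathbf Q_1}(y)$ generates $\fk P_2$, so it equals $\pm\epsilon_2^k\sqrt2$. Modulo norms this lies in $\{\pm\sqrt2,\pm\alpha\}$.

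First I will show that $-1\in N(F_1^\times)$:
\begin{itemize}
\item $\delta=\alpha pq$ is totally positive, so the real places impose no condition.
\item At the primes above $p$ and $q$ the residue fields have order $\equiv1\bmod 4$, so $-1$ is a local norm there.
\item The dyadic place then follows from the product formula.
\end{itemize}
Hence $(2)$ forces $\sqrt2$ or $\alpha$ to be a norm.

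The converse is the main obstacle. Suppose $\sqrt2=N(z)$ (the case of $\alpha$ is the same). Then $(z)=\fk l\,\fk a$ with $N\fk a=(1)$. By Hilbert~90 for ideals together with $h(\mathbf Q_1)=1$, we can write $\fk a=\fk b^{1-\sigma}$, so that $[\fk l]=[\fk b]^{\sigma-1}=[\fk b]^{-2}$ is a square. Since $\fk l^2$ is principal, $[\fk l]$ lies in $A(F_1)$, and it is a square there: its prime-to-$2$ part is a square of an odd-order class, hence trivial. By Remark~\ref{rem:F1-upper-bound} (using $r_4=0$ from Theorem~\ref{th: r_4 of F1}), $A(F_1)$ is elementary abelian, so its squares are trivial. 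Therefore $\fk l$ is principal.

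I expect this step to need some care about narrow versus wide classes. The statement $r_4(A^+(F_1))=0$ should give it regardless, because the image of $A^+$ in $A$ has no elements of order $4$ either.

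\textbf{Step 3: the explicit criteria.} By the Hasse norm theorem, an element $x$ is a norm from $F_1$ exactly when $(x,\delta)_v=1$ for every $v$. For both $x=\sqrt2$ and $x=\alpha$ the real places contribute trivially because $\delta\gg0$, and the dyadic place follows from the product formula. So only the four primes $\fk p,\bar{\fk p},\fk q,\bar{\fk q}$ matter, and at each of them the symbol is a quadratic residue symbol:
\begin{itemize}
\item $\left(\frac{\sqrt2}{\fk p}\right)=\kron{2}{p}_4$, and this value is the same at the conjugate prime.
\item As already computed in the proof of Theorem~\ref{th: r_4 of F1}, $\left(\frac{\alpha}{\fk p}\right)=\kron{p}{2}_4$, and likewise for $q$.
\end{itemize}
This gives the two displayed criteria. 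Finally, $q\equiv9\bmod16$ gives $\kron{q}{2}_4=-1$, so $\alpha$ is never a norm. Hence $(3)$ reduces to $\sqrt2$ not being a norm, which is $\kron2p_4=-1$ or $\kron2q_4=-1$.
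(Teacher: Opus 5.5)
Your proposal is correct and follows the paper's proof essentially step for step. The shared steps are: the square criterion for $K_2=F_1(\sqrt\alpha)$ gives $(1)\Leftrightarrow(2)$; the norm of a generator of $\fk l$ is reduced modulo norms using $-1\in N_{F_1/\mathbf Q_1}(F_1^\times)$; Hilbert~90 for ideals together with $r_4=0$ gives the converse; and the Hasse norm theorem with $\kron{\sqrt2}{\fk p}=\kron{2}{p}_4$ and $\kron{\alpha}{\fk p}=\kron{p}{2}_4$ gives the explicit criteria. Your extra checks fill two small points the paper passes over: that $y^2/\alpha$ is not already a square in $F_1$, and that $r_4(A^+(F_1))=0$ forces $A(F_1)$ to be elementary abelian. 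One wording slip: in Step~2, "$(2)$ forces" should read that the negation of $(2)$, i.e.\ $\fk l$ principal, forces $\sqrt2$ or $\alpha$ to be a norm.
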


\begin{proof}
First note that $-1$ is a norm from $F_1$ to $\mathbf Q_1$. Indeed, the computation of the $-1$ column in Theorem~\ref{th: r_4 of F1} gives
$$
(-1,\delta)_v=1
$$
at every ramified finite place $v$ of $\mathbf Q_1$, and the local norm condition is automatic at the remaining finite places and at the real places. Hence, the Hasse norm theorem gives
$$
-1 \in N_{F_1/\mathbf Q_1}(F_1^\times).
$$

Since $K_2=F_1(\sqrt \alpha)$, the quadratic square criterion gives, for every $u \in F_1^\times$,
$$
u \in K_2^{\times 2} \iff u \in F_1^{\times 2} \text{ or } u \in \alpha F_1^{\times 2}.
$$
Thus, a nontrivial class in $(E(F_1) \cap K_2^{\times 2})/E(F_1)^2$ exists if and only if there are $u \in E(F_1)$ and $x \in F_1^\times$ such that
$$
u=\alpha x^2.
$$
Now $\alpha=\epsilon_2\sqrt 2$ and $\fk P_2$ ramifies in $F_1/\mathbf Q_1$, so
$$
\fk P_2\mathcal O_{F_1}=\fk l^2, \quad (\alpha)=(\sqrt 2)\mathcal O_{F_1}=\fk l^2.
$$
Therefore,
$$
1=(u)=(\alpha)(x)^2=\fk l^2(x)^2,
$$
so $(x)=\fk l^{-1}$. Hence, such a nontrivial unit square class exists if and only if $\fk l$ is principal. This proves $(1) \Longleftrightarrow (2)$.

Assume next that $\fk l$ is principal, say $\fk l=(x)$. Then
$$
(N_{F_1/\mathbf Q_1}(x))=N_{F_1/\mathbf Q_1}(\fk l)=\fk P_2=(\sqrt 2),
$$
so
$$
N_{F_1/\mathbf Q_1}(x)=\epsilon\sqrt 2
$$
for some $\epsilon \in E(\mathbf Q_1)$. Since $E(\mathbf Q_1)=\langle -1,\epsilon_2\rangle$, since $-1$ is a norm, and since every square in $E(\mathbf Q_1)$ is a norm, the unit $\epsilon$ is congruent modulo norms to either $1$ or $\epsilon_2$. Thus, either $\sqrt 2$ or $\epsilon_2\sqrt 2=\alpha$ is a norm from $F_1$ to $\mathbf Q_1$.

Conversely, suppose $\sqrt 2$ or $\alpha$ is a norm from $F_1$ to $\mathbf Q_1$. Choose $x \in F_1^\times$ with
$$
N_{F_1/\mathbf Q_1}(x)=\sqrt 2 \quad \text{or} \quad N_{F_1/\mathbf Q_1}(x)=\alpha.
$$
Then
$$
N_{F_1/\mathbf Q_1}((x)\fk l^{-1})=1.
$$
By Hilbert 90 for ideals, there is a fractional ideal $\fk a$ of $F_1$ such that
$$
(x)\fk l^{-1}=\fk a^{1-\sigma},
$$
where $\sigma$ is the nontrivial element of $\operatorname{Gal}(F_1/\mathbf Q_1)$. Passing to the class group gives
$$
[\fk l]=[\fk a]^{\sigma-1}.
$$
Since $h(\mathbf Q_1)=1$, the action of $\sigma$ on $\operatorname{Cl}(F_1)$ is inversion. Hence,
$$
[\fk l]=[\fk a]^{-2}.
$$
Thus, $[\fk l]$ is a square in the class group. On the other hand, $\fk l^2=(\alpha)$, so $[\fk l]$ has order dividing $2$. By Theorem~\ref{th: r_4 of F1}, $r_4(A(F_1))=0$, so no nontrivial element of $A(F_1)[2]$ is a square. Therefore $[\fk l]=1$, and $\fk l$ is principal. This proves $(2) \Longleftrightarrow (3)$.

It remains to evaluate the two norm conditions. Since $F_1=\mathbf Q_1(\sqrt \delta)$, the Hasse norm theorem gives
$$
\beta \in N_{F_1/\mathbf Q_1}(F_1^\times) \iff (\beta,\delta)_v=1 \quad \text{for every place } v \text{ of } \mathbf Q_1.
$$
The only finite ramified places are $\fk P_2$, the two primes $\fk p,\overline{\fk p}$ above $p$, and the two primes $\fk q,\overline{\fk q}$ above $q$. The dyadic symbol is determined by the product formula, and the two symbols above a fixed rational prime agree. Hence it suffices to check one prime above $p$ and one prime above $q$.

For $\sqrt 2$,
$$
(\sqrt 2,\delta)_{\fk p}=\kron{\sqrt 2}{\fk p}=\kron{2}{p}_4, \quad (\sqrt 2,\delta)_{\fk q}=\kron{\sqrt 2}{\fk q}=\kron{2}{q}_4.
$$
Therefore
$$
\sqrt 2 \in N_{F_1/\mathbf Q_1}(F_1^\times) \iff \kron{2}{p}_4=\kron{2}{q}_4=1.
$$

For $\alpha=\epsilon_2\sqrt 2$, multiplicativity gives
$$
(\alpha,\delta)_{\fk p}=(\epsilon_2,\delta)_{\fk p}(\sqrt 2,\delta)_{\fk p}, \quad (\alpha,\delta)_{\fk q}=(\epsilon_2,\delta)_{\fk q}(\sqrt 2,\delta)_{\fk q}.
$$
By the computation of the $\epsilon_2$ column in Theorem~\ref{th: r_4 of F1},
$$
(\epsilon_2,\delta)_{\fk p}=\kron{2}{p}_4\kron{p}{2}_4, \quad (\epsilon_2,\delta)_{\fk q}=\kron{2}{q}_4\kron{q}{2}_4.
$$
Thus
$$
(\alpha,\delta)_{\fk p}=\kron{p}{2}_4, \quad (\alpha,\delta)_{\fk q}=\kron{q}{2}_4,
$$
and therefore
$$
\alpha \in N_{F_1/\mathbf Q_1}(F_1^\times) \iff \kron{p}{2}_4=\kron{q}{2}_4=1.
$$
Since $q \equiv 9 \mod {16}$, we have $\kron{q}{2}_4=-1$. Hence, $\alpha$ is not a norm. The criterion therefore reduces to
$$
E(F_1) \cap K_2^{\times 2}=E(F_1)^2 \iff \kron{2}{p}_4=-1 \quad \text{or} \quad \kron{2}{q}_4=-1.
$$
\end{proof}

\begin{proposition}\label{prop:qK2-le2}
Under the hypotheses of Theorem~1.1,
$$
q(K_2) \le 2.
$$
\end{proposition}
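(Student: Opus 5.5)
The plan is to combine the two reductions already established in this subsection: Corollary~\ref{cor:qK2-le2-from-EF1} and Lemma~\ref{lem:F1-no-new-squares-criterion}. Corollary~\ref{cor:qK2-le2-from-EF1} yields $q(K_2)\le 2$ from two inputs. The first is the quartic product condition
$$
\kron{2}{p}_4\kron{2}{q}_4\kron{D}{2}_4=-1,
$$
which is a hypothesis of Theorem~1.1. The second is the statement
$$
E(F_1)\cap K_2^{\times 2}=E(F_1)^2,
$$
namely that $E(F_1)$ acquires no new squares in $K_2$. It therefore suffices to verify this second input under the hypotheses of Theorem~1.1.

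For that, I will invoke Lemma~\ref{lem:F1-no-new-squares-criterion}. It shows that, because $q\equiv 9 \mod{16}$ (so that $\kron{q}{2}_4=-1$ and $\alpha$ is not a norm from $F_1$), the no-new-squares condition is equivalent to
$$
\kron{2}{p}_4=-1 \quad\text{or}\quad \kron{2}{q}_4=-1.
$$
This disjunction is exactly the second hypothesis of Theorem~1.1. The remaining standing assumptions of the subsection, $p\equiv 1\mod 8$, $q\equiv 9\mod{16}$ and $\kron{p}{q}=-1$, are also part of Theorem~1.1. These feed the auxiliary results used along the way: Theorem~\ref{th: r_4 of F1} gives $r_4(A(F_1))=0$, which is needed for the implication (3)$\Rightarrow$(2) in Lemma~\ref{lem:F1-no-new-squares-criterion}, and Lemma~\ref{lem: A1=C2xC2} gives the unit normalizations of Lemma~\ref{lem:unit-normalizations-K1-Q2}.

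The proof then reads as follows. Lemma~\ref{lem:F1-no-new-squares-criterion} gives $E(F_1)\cap K_2^{\times2}=E(F_1)^2$. Corollary~\ref{cor:qK2-le2-from-EF1} then shows that the square-class group $S$ of Lemma~\ref{lem:wada-description} has at most two elements, so $q(K_2)=|S|\le 2$.

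The only delicate point is bookkeeping: checking that every hypothesis silently used in the chain is implied by Theorem~1.1. In particular, the equality $(\epsilon_2,\delta)_{\fk p}=\kron{2}{p}_4\kron{p}{2}_4$ (Scholz reciprocity) and the agreement of the two Hilbert symbols above each rational prime are used without further conditions. I would re-read each lemma's statement to confirm that no extra assumption beyond the standing ones is needed.
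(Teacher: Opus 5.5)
Your proof is correct and is the paper's own argument: Lemma~\ref{lem:F1-no-new-squares-criterion} turns the hypothesis $\kron{2}{p}_4=-1$ or $\kron{2}{q}_4=-1$ into $E(F_1)\cap K_2^{\times 2}=E(F_1)^2$, and Corollary~\ref{cor:qK2-le2-from-EF1} together with the product condition then gives $q(K_2)\le 2$. One small correction to your bookkeeping: in that lemma, $r_4(A(F_1))=0$ is used only for the direction (2)$\Rightarrow$(3) (showing that a norm forces $\fk l$ to be principal), whereas the chain (3)$\Rightarrow$(2)$\Rightarrow$(1) that you actually need uses no $4$-rank information.
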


\begin{proof}
By Lemma~\ref{lem:F1-no-new-squares-criterion} and the hypothesis
$$
\kron{2}{p}_4=-1 \quad \text{or} \quad \kron{2}{q}_4=-1,
$$
we have
$$
E(F_1) \cap K_2^{\times 2}=E(F_1)^2.
$$
The product condition
$$
\kron{2}{p}_4\kron{2}{q}_4\kron{D}{2}_4=-1
$$
then allows us to apply Corollary~\ref{cor:qK2-le2-from-EF1}. Hence
$$
q(K_2) \le 2.
$$
\end{proof}

\begin{proof}[Proof of Theorem~1.1]
By Proposition~\ref{prop:qK2-le2},
$$
q(K_2) \le 2 < 4.
$$
Therefore Lemma~\ref{lem:q less than 4 implies lambda 0} gives
$$
\lambda_2(K)=0.
$$
\end{proof}

\begin{remark}
We finish by comparing our argument with the work of Kumakawa \cite{MR4262274}.
In \cite{MR4262274}, the author studies the case
\[
K=\Q(\sqrt{pq}), \qquad
p \equiv 3 \mod 8, \qquad q\equiv 9 \mod {16}, \qquad
\left(\frac{q}{p}\right)=-1, \qquad
\left(\frac{2}{q}\right)_4=1 .
\]
They prove the following result.
\begin{theorem*}[Kumakawa]
With the above assumptions, if $r_4(A_2)=1$, then
\[
|A_n|=|A_2|
\]
for all $n \geq 2$. Therefore, $\lambda_2(K)=0$.
\end{theorem*}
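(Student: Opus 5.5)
The statement immediately above is Kumakawa's theorem, quoted inside the closing remark. So the plan is to reproduce Kumakawa's argument, adapted to the setting $p\equiv 3 \mod 8$, $q\equiv 9\mod{16}$, $\kron{q}{p}=-1$, $\kron{2}{q}_4=1$. The overall strategy is the standard stabilization criterion. If $r_2(A_n)$ is constant and $|A_{n+1}|=|A_n|$ for a single $n\ge 1$ beyond which only one prime ramifies and it is totally ramified, then Fukuda's theorem gives $|A_m|=|A_n|$ for all $m\ge n$. Here $2$ is ramified (not split) in $K$, since $pq\equiv 3\mod 4$. So the unique dyadic prime is totally ramified in $K_\infty/K$, and Fukuda's result applies. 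It therefore suffices to prove $|A_3|=|A_2|$, or directly that $A_3\to A_2$ is an isomorphism. Then $\lambda_2(K)=0$ follows, as in Theorem~\ref{th: Greenberg split}.

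Concretely, I would proceed in three steps.

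\textbf{Step 1: the low layers.} Compute the low layers via genus theory and the biquadratic class number formula of Theorem~\ref{th: lemmermeyer biquadratic class number}. Show $A_0\simeq\Z/2\Z$, generated by the ramified prime above $q$ (or $p$), using \eqref{eq: rk4 for 3 mod 4} with $\kron{q}{p}=-1$. Compute $A_1$ via Kubota--Kuroda on $K_1/\Q$. Then show $r_2(A_n)=2$ for all $n\ge 1$ by Chevalley's formula (Theorem~\ref{th: Chevalley}) applied to $K_n/\mathbf Q_n$. The primes above $q$ split into two in $\mathbf Q_1$ and stay inert above, because $q\equiv 9\mod{16}$.

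\textbf{Step 2: use the hypothesis $r_4(A_2)=1$.} Together with $r_2(A_2)=2$, this forces $A_2\simeq \Z/2^a\times\Z/2$ with $a\ge 2$. Since $K_3/K_2$ is totally ramified at the single dyadic prime, the norm $A_3\to A_2$ is surjective. By the ambiguous class number formula, $|B'_{3/2}|$ depends only on $[E(K_2):N(E(K_3))]$. I would show that the strongly ambiguous classes of $K_3/K_2$ are exactly the lifts of $A_2$ together with the dyadic class, and that the latter is already a lift. The goal is to reduce to the condition $|A_3^{\Gal(K_3/K_2)}|=|A_2|$, which gives $A_3=A_3^{G}$ and hence stabilization.

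\textbf{Step 3: the capitulation/splitting argument.} The key computation mirrors Lemma~\ref{lem:kumakawa-capitulation}. Let $L_2$ be the maximal unramified abelian $2$-extension of $K_2$. Since $L_2\cap K_3=K_2$, we have $\Gal(L_2K_3/K_3)\simeq A_2$. The primes above $q$ are inert in $K_3/K_2$, so exactly as in Lemma~\ref{lem:q less than 4 implies lambda 0} they split completely in $L_2K_3/K_3$. Their classes therefore lie in the kernel of the Artin map $A_3\to A_2$. With $r_4(A_2)=1$ this kernel is contained in $A_3^2$ and is cyclic. Combining this with the action of $\Gal(K_3/\Q)$ should show that the kernel is trivial.

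I expect Step 3 to be the main obstacle, specifically ruling out $|A_3|>|A_2|$. This requires controlling the unit index $[E(K_2):N(E(K_3))]$, for which I would first try the Hilbert symbol computation at the dyadic prime, in the style of Lemma~\ref{lem:epsilonD-not-norm-K2K1}, using $\kron{2}{q}_4=1$.
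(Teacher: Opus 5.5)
Your proposal has a genuine gap, at the point you flag yourself, and the reduction around it does not help.

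\textbf{Step 2 reduces nothing.} In $K_3/K_2$ only the dyadic prime ramifies. By the product formula every unit of $K_2$ is then a local norm everywhere, hence a norm from $K_3^\times$, and Chevalley's formula gives $|A_3^{G}|=|A_2|$ unconditionally, where $G=\Gal(K_3/K_2)$. Since $A_3/A_3^G\simeq A_3^{1-\sigma}$, the condition $A_3=A_3^G$ is just equivalent to $|A_3|=|A_2|$, which is what you want to prove. For the same reason, a dyadic Hilbert-symbol computation in the style of Lemma~\ref{lem:epsilonD-not-norm-K2K1} is empty here. It only detects $E(K_2)\cap N(K_3^\times)$, which is all of $E(K_2)$, and says nothing about $[E(K_2):N(E(K_3))]$.

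\textbf{Step 3 does not give injectivity either.} Showing that the classes of the primes above $q$ lie in $\ker(A_3\to A_2)$ tells you about particular classes, not about the size of the kernel. Containment of the kernel in $A_3^2$ follows from $r_2(A_3)=r_2(A_2)$. Its cyclicity does not follow from $r_4(A_2)=1$, which is a statement about $A_2$, not about the kernel. The claim that ``the Galois action should force it to be trivial'' has no mechanism behind it.

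\textbf{The missing idea.} Run the splitting argument one layer lower and aim for capitulation of $A_0$ in $K_2$, not for $|A_3|=|A_2|$. This is the route the paper's closing remark attributes to Kumakawa. It is Lemma~\ref{lem:kumakawa-capitulation} with ``$|A_2|\le 8$'' replaced by $r_4(A_2)=1$.

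Let $L_1'$ be the maximal unramified elementary abelian $2$-extension of $K_1$. Then $L_1'\cap K_2=K_1$, and the Artin map $A_2\to\Gal(L_1'K_2/K_2)\simeq A_1/A_1^2\simeq(\Z/2\Z)^2$ is onto. Its kernel is therefore exactly $A_2^2$, which is cyclic precisely because $r_2(A_2)=2$ and $r_4(A_2)=1$.

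The two primes $\mathfrak q_1,\mathfrak q_2$ of $K_2$ above $\mathfrak q$ are inert in $K_2/K_1$, since $q\equiv 9\bmod 16$. So they split completely in $L_1'K_2/K_2$, and $a_i=[\mathfrak q_i]\in A_2^2$. Moreover $a_i^2=1$, because $\mathfrak q_i^2$ is extended from $\mathbf Q_2$. The $a_i$ are $\Gal(K_2/K)$-conjugate elements of the order-$2$ group $A_2^2[2]$, hence equal. Thus $i_{0,2}([\mathfrak q])=a_1a_2=1$.

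Stabilization then follows with no computation at level $3$. A single prime ramifies in $K_\infty/K$ and it is totally ramified, so $A_n\simeq X/\omega_nX$, and $i_{0,2}$ is multiplication by $\nu_2=\omega_2/T$. Capitulation means $\nu_2X\subseteq\omega_2X=T\nu_2X$, so Nakayama gives $\nu_2X=0$. Hence $\omega_nX=0$ for all $n\ge 2$, and $A_n\simeq X\simeq A_2$. This gives both $|A_n|=|A_2|$ and $\lambda_2(K)=0$, consistent with Greenberg's criterion.
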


The proof in \cite{MR4262274} and the proof of Theorem \ref{th: main th}
have the same general flavor. In both cases, the key point is to force the
capitulation of the class in $A_0$ by analyzing the second layer $K_2$ and
then applying Greenberg's criterion. 

There is, however, an important difference between the two arguments. In
Kumakawa's setting, the necessary bound on the Hasse unit index is essentially
automatic. Equivalently, the analogue of the condition
\[
q(K_2)<4
\]
does not require a separate computation. In our setting this was the main
extra difficulty: the reduction to capitulation only gives a useful conclusion
after proving that the Hasse unit index of the biquadratic extension
$K_2/\mathbf Q_1$ is small. This is why a substantial part of Section $7$
is devoted to a square-class computation of
\[
E(K_1)E(F_1)E(\mathbf Q_2)\subseteq E(K_2),
\]
which ultimately gives
\[
q(K_2)\leq 2.
\]

A second difference is the form of the final hypothesis. Kumakawa assumes the
class-group condition
\[
r_4(A_2)=1.
\]
This is already very close in spirit to our argument. The new feature of our result is that we replace
such a condition on $A_2$ by explicit numerical conditions. Namely, the
quartic-symbol assumptions
\[
\left(\frac{2}{p}\right)_4
\left(\frac{2}{q}\right)_4
\left(\frac{pq}{2}\right)_4=-1
\qquad\text{and}\qquad
\left(\frac{2}{p}\right)_4=-1
\ \text{ or }\
\left(\frac{2}{q}\right)_4=-1
\]
are used to prove the required bound on the Hasse unit index, and hence the
required smallness of $A_2$. 

It seems reasonable to expect that an analogous refinement should be possible
in Kumakawa's case. More precisely, one should be able to analyze the relevant
unit index and R\'edei-type data in the family
\[
p \equiv 3 \mod 8, \qquad q\equiv 9 \mod {16}, \qquad
\left(\frac{q}{p}\right)=-1, \qquad
\left(\frac{2}{q}\right)_4=1,
\]
and thereby obtain explicit quartic-symbol conditions which imply
$r_4(A_2)=1$. Such a criterion would turn Kumakawa's result into a fully
explicit numerical criterion.
\end{remark}

\subsection*{Acknowledgments} The author Josu\'e \'Avila would like to thank the Department of Mathematics at Universidad de Costa Rica, the "Vicerrector\'ia de Investigaci\'on" for funding his research under the project 821-C4-257, and the "Centro de Investigaci\'on de Matem\'atica Pura y Aplicada" (CIMPA) at Universidad de Costa Rica where he is currently employed as a research professor.

\printbibliography

\end{document}